\documentclass{amsart}

\usepackage{amsmath}
\usepackage{amsthm,amssymb,color,comment}
\usepackage{bbm}
\usepackage{hyperref}
\usepackage[TS1,T1]{fontenc}
\usepackage{inputenc}
\usepackage{dsfont}
\usepackage{tikz}
\usepackage{enumitem}
\usepackage{soul}
\usepackage[sort]{cite}

\numberwithin{equation}{section}

\newtheorem{thm}{Theorem}[section]

\newtheorem{lem}[thm]{Lemma}
\newtheorem{cor}[thm]{Corollary}

\newtheorem{Def}[thm]{Definition}

\theoremstyle{definition}
\newtheorem{Ass}[thm]{Assumption}
\newtheorem{rem}[thm]{Remark}

\newtheorem{Not}[thm]{Notation}

\DeclareMathOperator{\DIV}{div}

\newcommand{\R}{\mathbb{R}}
\newcommand{\N}{\mathbb{N}}
\newcommand{\p}{\partial}
\newcommand{\eps}{\varepsilon}

\newcommand{\supp}{\text{supp}}
\newcommand{\diff}{\mathop{}\!\mathrm{d}}

\newcommand{\symm}{\mathbb{R}^{d\times d}_{\text{sym}}}


\makeatletter
\newcommand{\doublewidetilde}[1]{{%
  \mathpalette\double@widetilde{#1}%
}}
\newcommand{\double@widetilde}[2]{%
  \sbox\z@{$\m@th#1\widetilde{#2}$}%
  \ht\z@=.9\ht\z@
  \widetilde{\box\z@}%
}
\makeatother
\textwidth 6 in
\evensidemargin 0.2 in
\oddsidemargin 0.2 in

\parskip 3pt

\author{Miroslav Bul\'i\v{c}ek}
\address{Mathematical Institute, Faculty of Mathematics and Physics, Charles University, Sokolovsk\'{a} 83, 186 75, Prague, Czech Republic}
\email{mbul8060@karlin.mff.cuni.cz}
\thanks{Miroslav Bul{\'\i}{\v{c}}ek was supported by the project No. 20-11027X financed by GA\v{C}R.}

\author{Piotr Gwiazda}
\address{Institute of Mathematics of Polish Academy of Sciences, Jana i J\k edrzeja \'Sniadeckich 8, 00-656 Warsaw, Poland}
\email{pgwiazda@mimuw.edu.pl}
\thanks{Piotr Gwiazda was supported by National Science Center, Poland through project no. 2018/31/B/ST1/02289.}

\author{Jakub Skrzeczkowski}
\address{Faculty of Mathematics, Informatics and Mechanics, University of Warsaw, Stefana Banacha 2, 02-097 Warsaw, Poland}
\email{jakub.skrzeczkowski@student.uw.edu.pl}
\thanks{Jakub Skrzeczkowski was supported by National Science Center, Poland through project no. 2019/35/N/ST1/03459.}

\author{Jakub Woźnicki}
\address{Faculty of Mathematics, Informatics and Mechanics, University of Warsaw, Stefana Banacha 2, 02-097 Warsaw, Poland}
\email{jw.woznicki@student.uw.edu.pl}
\thanks{Jakub Woźnicki was supported by National Science Center, Poland through project no. 2017/27/B/ST1/01569}

\begin{document}

\title[Non-Newtonian fluids with discontinuous-in-time stress tensor]{Non-Newtonian fluids with discontinuous-in-time stress tensor}

\begin{abstract}
We consider the system of equations describing the flow of incompressible fluids in bounded domain. In the considered setting, the Cauchy stress tensor is a monotone mapping and has asymptotically $(s-1)$-growth with the parameter $s$ depending on the spatial and time variable. We do not assume any smoothness of $s$ with respect to time variable and assume the log-H\"{o}lder continuity with respect to spatial variable. Such a setting is a natural choice if the material properties are instantaneously, e.g. by the switched electric field. We establish the long time and the large data existence of weak solution provided that $s\ge(3d+2)(d+2)$.
\end{abstract}

\keywords{incompressible flow, non-Newtonian fluid, non-standard growth, generalized Lebesgue space, Musielak--Orlicz space}
\subjclass[2000]{35K51, 35Q30, 76D05}

\maketitle

\section{Introduction}
\noindent We consider the system of partial differential equations
\begin{align}\label{sys:main_system}
    \left\{\begin{array}{ll}\partial_t u(t, x) + \DIV_x (u(t,x)\otimes u(t, x)) + \nabla_x p(t, x) = \DIV_x S(t, x, Du) + f(t, x)\\
    \DIV_x u(t, x) = 0
    \end{array}\right.
\end{align}
describing the flow of incompressible, homogeneous, non-Newtonian fluid. Here, $u$ is the velocity of the fluid, $p$ denotes pressure, $S$ is the constitutively determined part of the Cauchy stress tensor depending on the symmetric gradient $Du$, $f$ represents a given density of the  external body forces and for simplicity, we consider that the density of the fluid is equal to one. The system of equations \eqref{sys:main_system} is formulated on a space-time cylinder $\Omega_T:=(0,T)\times \Omega$, where $\Omega \subset \mathbb{R}^d$ is a Lipschitz domain and $d$ denotes the dimension. The above system is completed by the no-slip boundary conditions, i.e., $u$ vanishes on $\partial \Omega$, and by the initial condition $u_0(x)$.\\

\noindent Most of the papers devoted to the analysis of non-Newtonian fluids assume Cauchy stress tensor to be of power type
$$
S(t,x,Du) \sim (\nu_0 + \nu_1\,|Du|^{p-2}) \, Du.
$$
First existence results concerning \eqref{sys:main_system} were proved for $p \geq \frac{11}{5}$ (in 3D) by Lions and Ladyzhenskaya in \cite{MR0259693, MR0254401}. Since then, many improvements have appeared, from the higher regularity method in \cite{malek1996weakandmeasure} giving the bound $p\ge \frac{9}{5}$, followed by the $L^{\infty}$-truncation method for $p\ge \frac{8}{5}$, see \cite{MR1713880}, the Lipschitz truncation method for $p>\frac{6}{5}$, see \cite{MR2668872,MR2001659,MR3023393}, up to a new definition of a solution in \cite{MR4102807} leading to the theory for all $p>1$.  We refer to the extensive review in \cite{MR4076814} in context of fluids with very complicated rheology. Nevertheless, let us remark that such equations are still a topic of research - recently they have been analyzed in the context of nonuniqueness and convex integration \cite{MR4328053}, extending the groundbreaking paper of Buckmaster and Vicol on Navier-Stokes equation \cite{MR3898708}. 

\noindent In this paper, we are interested in the case when $S$ has the so-called non-standard growth. The iconic example is
\begin{equation}\label{eq:stress_tensor_var_exp}
S(t,x,Du) \sim (\nu_0 + \nu_1\,|Du|^{s(t,x)-2}) \, Du,
\end{equation}
where the exponent $s(t,x)$ depends on the time variable $t$ and the spatial variable $x$. The motivation for considering~ \eqref{eq:stress_tensor_var_exp} comes from the behaviour of electrorheological fluids whose mechanical properties dramatically change when an external electric field is applied, see~\cite{RAJAGOPAL1996401}. The topic has been extensively studied over the last years from the mathematical point of view, here we refer to~\cite{diening2011lebesgue,MR1930392, MR1810360, MR2037246, MR3474486, MR2346460, MR2610563}. These considerations have been recently generalized to the micropolar fluids~\cite{MR3474486, MR3373570} and also chemically reacting fluids~\cite{MR4286250, MR3904139, MR3834434, ko2018existence}.\\

\noindent A natural assumption on $S$, which reflects the structure \eqref{eq:stress_tensor_var_exp} involves the growth and the coercivity formulated as the following inequality
\begin{equation}\label{eq:ass_in_intro}
c \, S(t, x, \xi) : \xi \geq | \xi|^{s(t, x)} + | S(t, x, \xi)|^{s'(t, x)} - h(t, x),
\end{equation}
where $c$ is some constant, $h \in L^{1}(\Omega_T)$ and $s'(t,x)$ is the H{\"o}lder conjugate exponent  to $s(t,x)$, i.e., $s':=\frac{s}{s-1}$. Analysis of \eqref{sys:main_system} with the Cauchy stress tensor of the form \eqref{eq:ass_in_intro} requires the concept of generalized Lebesgue spaces $L^{s(t,x)}$. One can also generalize \eqref{eq:ass_in_intro} by replacing power-type function with a generalized $N$-function. The resulting analysis requires application of the general Musielak--Orlicz spaces \cite{chlebicka2019book} as in \cite{MR3007700,MR3190317,MR2466805, MR2597212}. We remark that all results of this paper can be formulated in this setting but we decided not to do so for the sake of clarity. \\

\noindent In this paper we establish the existence of global-in-time and large-data solutions to \eqref{sys:main_system} for exponents $s(t,x)$ being discontinuous in the time variable. The former approaches were based on the so-called log-H{\"o}lder continuity of the exponent $s(t,x)$, which allows one to use the density of smooth functions in the space $L^{s(t,x)}$,  see \cite{MR3847479, cruz2013variable}. In fact, the log-H{\"o}lder continuity is necessary for the density to be true, see \cite[Example 6.12]{cruz2013variable}. Nevertheless, inspired by \cite{bulicek2021parabolic}, where the following problem was treated
$$
\partial_t u + \DIV \left(|\nabla u|^{s(t,x)-2} \nabla u \right) = f
$$
with $s(t,x)$ being discontinuous in time and log-H{\"o}lder continuous in space (extending theresults of \cite{chlebicka2019parabolic}),
we do not require the smoothness of $s$ with respect to the time variable here. In addition, we also do not require any relationship between the minimal and maximal values of $s(t,x)$. The only restriction is due to the convective term and has the form $s\ge \frac{3d+2}{d+2}$. Note that in case we consider a generalized Stokes problem only, i.e., we consider the system \eqref{sys:main_system} without the term $\DIV_x (u(t,x)\otimes u(t, x))$, there is no restriction on $s$ except the log-H\"{o}lder continuity with respect to the spatial variable.  \\

\noindent When compared to \cite{bulicek2021parabolic} the main difficulty of the present work lies in the fact that \eqref{sys:main_system} can be tested only with a divergence-free function. In particular, we cannot test it with the truncation of solution as it loses divergence-free property after applying truncation operator. Even when one recovers pressure $p$ by the Ne\v{c}as Theorem, one obtains terms which are not treatable as we have only weak convergence of both the pressure and derivatives of the solution. We remark that one can try to overcome this problem by applying approximation called Lipschitz truncation method, see \cite{MR3672391,MR3119635,MR2943947,MR2394508,MR970512}. Nevertheless, this approach does not seem to be applicable here as our work uses equation satisfied by the exploited approximation in the crucial way. Contrary to the mollification, as Lipschitz truncations are defined by the maximal function, it is not trivial to write equation satisfied by them.\\

\noindent The structure of the paper is as follows. In Section~\ref{section:2}, we introduce the precise assumption on the Cauchy stress tensor $S$ and also on the variable exponent $s(t,x)$ and formulate rigorously the main result of the paper. Section~\ref{section:3} is devoted to the approximation theory in Bochner-Sobolev spaces related to our problem. Then we deal with an abstract parabolic equation in Section~\ref{section:4} and show the validity of the local energy inequality based on the proper pressure decomposition and proper mollification procedure. Then in Section~\ref{section:5} we introduce a classical approximative problem for which the theory is available and in Section~\ref{section:6} we finally pass to the limit and finish the proof of the main theorem. The classical but auxiliary tools are recalled for the sake of clarity in Appendix.

\section{Preliminaries and the main result}\label{section:2}
\noindent First, we introduce the notation used through the work. Let $d = 2, 3$ be the dimension of the space, $\Omega \subset \mathbb{R}^d$ be a Lipschitz domain and $T > 0$ denotes the length of time interest. We write $x$ for an element of $\Omega$ and $t$ for an element of $(0,T)$.   The corresponding parabolic domain will be denoted with $\Omega_T:= (0,T) \times \Omega$ and the its specific subdomains as $\Omega_t = (0,t) \times \Omega$. Next, for any $a,b\in \mathbb{R}^d$ we write $a\cdot b$ for the standard scalar product of $a$ and $b$. Similarly, the space $\symm$ denotes the space of symmetric $d\times d$ matrices and for any $A,B \in \symm$ we denote the scalar product by $A:B$. In addition, the symbol $\otimes$ is reserved for the tensorial product, i.e., for $a,b\in \mathbb{R}^d$ we denote $a\otimes b \in \symm$ as $(a\otimes b)_{ij}:=a_ib_j$ for $i,j=1,\ldots, d$. We use the standard notation for Sobolev and Lebesgue function space and frequently do not distinguish between scalar-, vector- or matrix-valued functions.
In addition, to shorten the notation, we frequently use the following simplifications. When $f \in L^p(\Omega)$, we simplify it to $f \in L^p_x$. Similarly, if $f \in L^p(0,T; L^q(\Omega))$, $f \in L^p(0,T; W^{1,q}(\Omega))$ or $f \in L^p(0,T; W^{1,q}_0(\Omega))$, then we write $f \in L^p_t L^q_x$, $f \in L^p_t W^{1,q}_x$ or $f \in L^p_t W^{1,q}_{0,x}$ respectively (here, $W^{1,q}(\Omega)$ and $W^{1, q}_0(\Omega)$ are the usual Sobolev spaces). For the exponent $p \in [1,\infty]$, we denote by $p'$ its H{\"o}lder conjugate defined by the equation $\frac{1}{p} + \frac{1}{p'} = 1$. Symbol $\nabla u$ is used for the spatial gradient of $u$ and $Du$ denotes its symmetric part, i.e. $Du = \left(\nabla u + (\nabla u)^\intercal\right)/2$. We mostly work in the variable exponent spaces $L^{s(t,x)}(\Omega_T)$. The last special function space related to the fluid mechanics is $ L^2_{0,\DIV}(\Omega)$, which is defined as a closure of the set $\{u\in C_c(\Omega: \mathbb{R}^d) \, \DIV u=0\}$ in $L^2(\Omega)$. Through the paper we also employ the universal constant $C$ that may vary from line to line, but depends only on data. In some particular cases we write such dependence explicitly. For the definition and the discussion on properties of the aforementioned spaces see Appendix~\ref{app:musielaki}.

\subsection*{Assumptions on data}
Let us now state the needed assumptions on the exponent function $s(t, x)$:
\begin{Ass}\label{ass:exponent_cont_space}
We assume that a measurable function $s(t,x): \Omega_T \to [1,\infty)$ satisfies the following:
\begin{enumerate}[label=(A\arabic*)]    \item\label{ass:cont} (continuity in space) $s(t,x)$ is a log-H\"older continuous functions on $\Omega$ uniformly in time, i.e. there is a constant $C$ such that for all $x, y \in \Omega$ and all $t \in [0,T]$
$$
|s(t,x) - s(t,y)| \leq -\frac{C}{\log|x-y|},
$$
\item\label{ass:usual_bounds_exp} (bounds) it holds that $ \frac{3d + 2}{d+2} =: s_{\text{min}}\leq s(t,x)  \leq s_{\text{max}}$ for a.e. $(t,x) \in \Omega_T$.
\end{enumerate}
\end{Ass}
For later purposes we also define an exponent $s_0$ as
\begin{equation} \label{s0}
s_0 := 3 + \frac{2}{d}.
\end{equation}
We remark that the condition \ref{ass:cont} is somehow standard, as it guarantees good approximation properties (with respect to spatial variable $x$) in the variable exponent space. Assumption \ref{ass:usual_bounds_exp} is related to the  continuity of the three-linear form
$$
v\longmapsto \int_{\Omega_T} v\otimes v : \nabla v\diff x\diff t
$$
that appears in the analysis and is somehow ``necessary" to obtain the so called energy equality. Note that the same problem appears in the classical Navier--Stokes equations. \\

\subsection*{Assumptions on the stress tensor}
Concerning the stress tensor $S: (0, T) \times \Omega\times \symm \to \symm$ we assume the following:
\begin{Ass}\label{ass:stress_tensor} We assume that
\begin{enumerate}[label=(T\arabic*)]
\item \label{T1}$S(t,x, \xi)$ is a Carath\'{e}odory function and $S(t, x, 0) = 0$,
\item \label{coercitivity_stress_tensor} (coercivity and growth conditions) There exists a positive constant $c$  and a non-negative, integrable function $h(t, x)$, such that for any $\xi\in\symm$ and almost every $(t, x) \in \Omega_T$
$$
c \, S(t, x, \xi) : \xi \geq | \xi|^{s(t, x)} + | S(t, x, \xi)|^{s'(t, x)} - h(t, x)
$$
\item \label{monotonicity_stress_tensor}(monotonicity) S is monotone, i.e.:
$$
(S(t, x, \xi_1) - S(t, x, \xi_2)) : (\xi_1 - \xi_2) \geq 0
$$
for all $\xi_1 \neq \xi_2\in \symm$ and almost every $(t, x) \in \Omega_T$.
\end{enumerate}
\end{Ass}
Notice that the assumption \ref{T1} is assumed just to simplify the approximating scheme. On the other hand, the monotonicity \ref{monotonicity_stress_tensor} is the key property. One could consider a more general setting of maximal monotone graphs here without any difficulties, but to avoid the technical difficulties we consider $S$ to be a Carath\'{e}odory mapping. Finally, the assumption~\ref{coercitivity_stress_tensor} is a natural setting to get bounds for gradient of an unknown velocity as well as bound on the stress tensor.

\subsection*{Main result}
Having introduced all the needed notation, we may finally state the main theorem.
\begin{thm}\label{thm:the_main_result}
Let $S(t, x, \xi)$ satisfy the Assumption~\ref{ass:stress_tensor} with the exponent $s(t, x)$ satisfying Assumption~\ref{ass:exponent_cont_space}. Then, for any $f\in L^{1}_t L^2_x \cup L^{s_{\text{min}}'}(0,T;(W_0^{1,s_{\text{min}}}(\Omega))^*)$ and any $u_0\in L^2_{0,\DIV}(\Omega)$, there exists $u\in L^\infty_t L^2_x \cap L^{s_{\text{min}}}_t W^{1, s_{\text{min}}}_{0, x}$, $Du\in L^{s(t, x)}(\Omega_T)$, such that $\DIV u=0$ almost everywhere in $\Omega_T$ and
\begin{align}\label{eq:the_main_result}
    \int_{\Omega_T}-u  \cdot \partial_t \phi - u\otimes u : \nabla\phi + S(t, x, Du):D\phi\diff x\diff t = \int_{\Omega_T}f \cdot \phi\diff x\diff t + \int_{\Omega}u_0(x)\cdot \phi(0,x)\diff x
\end{align}
for any $\phi\in C^\infty_c([0, T) \times \Omega)$ fulfilling $\DIV \phi = 0$ almost everywhere in $\Omega_T$.
\end{thm}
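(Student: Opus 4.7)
The plan is to follow the standard Galerkin plus monotone-operator paradigm for generalised Navier--Stokes systems, adapted (as indicated in Sections~\ref{section:3}--\ref{section:6}) to the fact that $s$ is merely measurable in $t$. First I would introduce an approximate problem by replacing $S$ with $S^n(t,x,\xi):=S(t,x,\xi)+\frac{1}{n}|\xi|^{s_0-2}\xi$, where $s_0$ is the exponent from \eqref{s0}. This regularised stress tensor enjoys uniform $(s_0-1)$-growth, so a Galerkin scheme in a basis of $L^2_{0,\DIV}(\Omega)$ built from a Stokes eigenbasis (or a direct application of classical monotone-operator theory) produces, for each $n$, a weak solution $u_n$ with $\DIV u_n=0$ and an energy (in)equality. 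Testing with $u_n$ and using Assumption~\ref{coercitivity_stress_tensor} yields the uniform bounds
\begin{equation*}
\|u_n\|_{L^\infty_t L^2_x}+\|Du_n\|_{L^{s(\cdot,\cdot)}(\Omega_T)}+\|S(\cdot,\cdot,Du_n)\|_{L^{s'(\cdot,\cdot)}(\Omega_T)}+n^{-1/s_0}\|Du_n\|_{L^{s_0}(\Omega_T)}\le C.
\end{equation*}
Combined with the equation these give enough regularity of $\partial_t u_n$ that Aubin--Lions yields $u_n\to u$ strongly in $L^2(\Omega_T)$; the exponent $s_{\min}=(3d+2)/(d+2)$ in \ref{ass:usual_bounds_exp} is exactly what is needed to pass to the limit in $u_n\otimes u_n$ in $L^1(\Omega_T)$.

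The crux is then to identify $\overline{S(\cdot,\cdot,Du_n)}=S(\cdot,\cdot,Du)$. Because admissible test functions in \eqref{eq:the_main_result} must be divergence-free, one cannot directly insert a truncation of $u_n$; moreover, as stressed in the introduction, the Lipschitz truncation method is not workable here, because the truncated function does not satisfy a usable PDE when $s$ is only measurable in $t$. Instead, I would follow the scheme sketched in Section~\ref{section:4}: recover the pressure $p_n$ via the Ne\v{c}as theorem, decompose $p_n=p_n^{(1)}+p_n^{(2)}$ into its harmonic and remainder parts, mollify the equation in time, and test the mollified equation against $\eta(u_n)_\varepsilon$ for a spatial cut-off $\eta$ supported in a subdomain. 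Letting $\varepsilon\to 0$ produces a local energy (in)equality that is uniform in $n$. Passing to the limit $n\to\infty$ in this inequality and applying Minty's trick in the Musielak--Orlicz framework (as in \cite{MR2466805,MR2597212}) identifies the weak limit of $S^n(\cdot,\cdot,Du_n)$ with $S(\cdot,\cdot,Du)$ on each such subdomain; exhausting $\Omega_T$ yields the identification globally.

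Once the nonlinear stress is identified, passing to the limit in \eqref{eq:the_main_result} is routine: the linear terms pass by weak convergence in the appropriate dual space, the convective term by the strong $L^2_t L^2_x$-compactness of $u_n$, the $\frac{1}{n}$-regularisation vanishes when tested against $D\phi\in L^\infty$, and the initial datum is recovered from a time-continuity statement for $u$ into a weak topology, which follows from $u\in L^\infty_t L^2_x$ together with a bound on $\partial_t u$ in a suitable negative-order divergence-free Bochner space. The main obstacle is precisely the derivation of the local energy inequality and the identification of $\overline{S(\cdot,\cdot,Du_n)}$: this is where incompressibility, the time-discontinuity of $s$, and the restriction to divergence-free test functions interact delicately, and it is the reason the paper devotes the entirety of Section~\ref{section:4} to a custom mollification and pressure-decomposition argument.
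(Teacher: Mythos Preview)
Your overall architecture matches the paper's, but the regularisation step contains a genuine error. You set $S^n(t,x,\xi)=S(t,x,\xi)+\tfrac{1}{n}|\xi|^{s_0-2}\xi$ with $s_0=3+\tfrac{2}{d}$ and claim this tensor ``enjoys uniform $(s_0-1)$-growth''. That is false whenever $s_{\max}>s_0$, which the assumptions do not exclude: the original piece $S$ still grows like $|\xi|^{s(t,x)-1}$, so $S^n$ retains variable-exponent growth with an exponent that is merely measurable in $t$. Your approximate problem is therefore \emph{not} a classical constant-exponent system, and its well-posedness (let alone an energy equality) is exactly the open issue you are trying to resolve. The paper instead regularises with $\theta\nabla_\xi|\xi|^{s_{\max}}$; Lemma~\ref{lem:prop_of_Stheta} shows that $S^\theta$ then satisfies standard growth and coercivity with the \emph{constant} exponent $s_{\max}$, so the approximate problem falls under the known theory (Theorem~\ref{thm:existence_approximation}) and, crucially, delivers the global energy equality \eqref{app:energy} and the extra regularity $Du^\theta\in L^{s_{\max}}$ needed for $p_4^\theta$ and for several limit passages in Theorem~\ref{thm:local_energy_equality}.

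A second point: decomposing the pressure into ``harmonic and remainder parts'' is not enough here. The paper splits the non-harmonic part further, ball by ball, into $p_1^{i},p_2^{i}$ (localised via the partition of unity $\zeta_i$) plus $p_3,p_4$; see \eqref{eq:defp1A}--\eqref{eq:defp4}. The point is that on $\mathcal{B}_{2r}^i$ one only controls $\alpha$ in $L^{r_i'(t)}$ and $u$ in $L^{R_i(t)}$, not in any global Lebesgue space, so the pressure pieces must be defined so that their integrability matches these \emph{local} exponents. Without this localisation you cannot pair $p_j^{i}$ with $(u+\nabla p_h)\nabla\psi$ in Step~3 of Section~\ref{section:6}, and the limsup inequality \eqref{conv:monotonicity_trick_0} driving Minty's trick would not follow. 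Once you fix the regularising exponent to $s_{\max}$ and adopt the localised pressure decomposition, your sketch coincides with the paper's proof.
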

We would like to point out here that the assumption  $f\in L^{1}_t L^2_x \cup L^{s_{\text{min}}'}(0,T;(W_0^{1,s_{\text{min}}}(\Omega))^*)$ can be relaxed, namely the second part. But because we do not want to complicate the paper we do not consider it here. Even more, in the proof we consider only the case $f\in L^{1}_t L^2_x$ to avoid difficulties. Finally, we want to emphasize again that the assumption~\ref{ass:usual_bounds_exp} is not needed in case of generalized Stokes system, i.e., if the term  $\int_{\Omega_T}u\otimes u : \nabla\phi$ is omitted.

\section{Approximation in variable exponent spaces}\label{section:3}
\noindent In this section we discuss a method to approximate functions $u$ such that $D u \in L^{s(t,x)}(\Omega_T)$ in spatial variable. Moreover, we will guarantee that the convergence $Du^{\varepsilon} \to  Du$ holds in $L^{s(t,x)}(\Omega_T)$.

\begin{Def}[Mollification with respect to the spatial variable]\label{res:mol_in_sp}
Let $\eta:\R^d \to \R$ be a standard regularizing kernel, i.e. $\eta$ is a smooth, nonnegative function, compactly supported in a ball of radius one and fulfills $\int_{\mathbb{R}^d} \eta(x) \diff x = 1$. Then, we set $\eta_{\varepsilon}(x) = \frac{1}{\varepsilon^d} \eta\left(\frac{x}{\varepsilon}\right)$ and for arbitrary $u: \R^d \times [0,T] \to \mathbb{R}$, we define
$$
u^{\varepsilon}(t,x) = \int_{\R^d} \eta_{\varepsilon}(x-y) u(t,y) \diff y.
$$
\end{Def}

\noindent For further reference we also introduce mollification with respect to the time variable.

\begin{Def}[Mollification with respect to time]\label{res:mol_in_ti}
Let $\zeta:\R \to \R$ be a standard regularizing kernel, i.e. $\zeta$ is a smooth, nonnegative function, compactly supported in a ball of radius one and fulfills $\int_{\mathbb{R}} \zeta(x) \diff x = 1$. Then, we set $\zeta_{\varepsilon}(x) = \frac{1}{\varepsilon} \zeta\left(\frac{x}{\varepsilon}\right)$ and for arbitrary $u: \R \times \Omega \to \R$, we define $\mathcal{R}^{\varepsilon}u: \R \times \Omega \to \R$ as
$$
\mathcal{R}^{\varepsilon}u(t,x) = \int_{\R} \zeta_{\varepsilon}(t-s)\, u(s,x) \diff s.
$$
\end{Def}

\noindent The main result of this section reads:
\begin{thm}\label{thm:approx_theorem}
Let $\Omega \subset \R^d$, and $\psi:\Omega \to \R$ be arbitrary such that $\psi \in C_c^{\infty}(\Omega)$. Let $u$ satisfy $u \in L^{\infty}_t L^2_x \cap L^1_t W^{1,1}_{0,x}$ and $D u \in L^{s(t,x)}(\Omega_T)$. If we extend $u(t,x)$ by zero for all $x\notin \Omega$, then there exists $\varepsilon_0>0$ depending on $\psi$ and $\Omega$ such that:
\begin{enumerate}[label=(S\arabic*)]
\item\label{thmitem:conv1} $\left(u^{\varepsilon} \psi\right)^{\varepsilon} \in L^{\infty}(0,T;C_0^{\infty}(\Omega))$ for all $\varepsilon \in (0, \varepsilon_0)$,
\item\label{thmitem:conv2} $\left(u^{\varepsilon} \psi\right)^{\varepsilon}  \to u\, \psi$ a.e. in $\Omega_T$ and in $L^1(0,T; L^1(\Omega))$ as $\varepsilon \to 0^+$,
\item\label{thmitem:conv3}
$D\left(u^{\varepsilon} \psi\right)^{\varepsilon} \in L^{s(t,x)}(\Omega_T)$ and
$D\left(u^{\varepsilon} \psi\right)^{\varepsilon} \to D \left(u \psi\right)$ in $L^{s(t,x)}(\Omega_T)$ as $\varepsilon \to 0^+$.
\end{enumerate}
\end{thm}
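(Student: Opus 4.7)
The plan is to dispatch~\ref{thmitem:conv1} and~\ref{thmitem:conv2} by support tracking and the classical $L^1$-theory of mollifiers, and to reduce~\ref{thmitem:conv3} to two facts about the variable exponent space $L^{s(t,x)}(\Omega_T)$ that hold under Assumption~\ref{ass:exponent_cont_space}: the spatial mollifier is bounded on $L^{s(t,x)}$ uniformly in $\varepsilon$, and $f^\varepsilon \to f$ in $L^{s(t,x)}$ for every $f \in L^{s(t,x)}$.

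First I would set $K := \supp\psi \Subset \Omega$ and $\varepsilon_0 := \tfrac12\dist(K,\partial\Omega)$. For $\varepsilon \in (0,\varepsilon_0)$, $u^\varepsilon(t,\cdot)$ is smooth in $x$, so $u^\varepsilon\psi$ is smooth with support in $K$; a second spatial mollification preserves smoothness and enlarges the support by at most $B_\varepsilon(0) \subset B_{\varepsilon_0}(0)$, which remains in $\Omega$. The $L^\infty_t$-bound comes from a double application of Young's convolution inequality starting from $u \in L^\infty_t L^2_x$. This gives~\ref{thmitem:conv1}. Statement~\ref{thmitem:conv2} then follows from the classical $L^1$-convergence of spatial mollifiers (applied twice) together with the Lebesgue differentiation theorem for the a.e.\ assertion.

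For~\ref{thmitem:conv3}, I would first verify that the target $D(u\psi)$ lies in $L^{s(t,x)}(\Omega_T)$. Writing $D(u\psi) = \psi\,Du + \tfrac12 (u \otimes \nabla\psi + \nabla\psi \otimes u)$, the first summand belongs to $L^{s(t,x)}$ since $\psi$ is bounded and $Du \in L^{s(t,x)}$; for the second, the variable-exponent Poincar\'e inequality applied slicewise in $t$ (valid under Assumption~\ref{ass:cont}) yields $u \in L^{s(t,x)}(\Omega_T)$, so the second summand is in $L^{s(t,x)}$ as well. Using $Du^\varepsilon = (Du)^\varepsilon$ (legitimate because $u$ is extended by zero from $W^{1,1}_{0,x}$), I would then decompose
\begin{equation*}
D\bigl((u^\varepsilon\psi)^\varepsilon\bigr) - D(u\psi) = \Bigl(\psi\,[(Du)^\varepsilon - Du] + \tfrac12 (u^\varepsilon - u)\otimes\nabla\psi + \tfrac12\nabla\psi\otimes(u^\varepsilon - u)\Bigr)^{\varepsilon} + \Bigl((D(u\psi))^\varepsilon - D(u\psi)\Bigr).
\end{equation*}
The second summand tends to $0$ in $L^{s(t,x)}$ by the strong convergence of spatial mollifiers in the variable-exponent Lebesgue norm (see e.g.~\cite{cruz2013variable,diening2011lebesgue}) applied to $D(u\psi) \in L^{s(t,x)}$. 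For the first summand, the outer mollifier is a bounded operator on $L^{s(t,x)}$ uniformly in $\varepsilon$, so it suffices that its argument tends to $0$ in $L^{s(t,x)}$, which is the same statement applied to $Du \in L^{s(t,x)}$ and to $u\,\nabla\psi \in L^{s(t,x)}$.

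The only genuine obstacle is the careful use of variable-exponent tools when $s$ is merely measurable in time. Assumption~\ref{ass:cont} ensures that the log-H\"older constant of $s(t,\cdot)$ in $x$ is \emph{uniform} in $t$, so the slicewise constants in the Poincar\'e inequality and in the boundedness and strong convergence of spatial mollifiers do not depend on $t$, which permits integration in $t$ via dominated convergence. This is precisely why the mollification is taken to act only in the spatial variable: a time mollification would require log-H\"older regularity of $s$ in $t$, which is not assumed.
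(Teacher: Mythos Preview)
Your handling of \ref{thmitem:conv1} and \ref{thmitem:conv2} is fine and matches the paper. The difficulty is entirely in \ref{thmitem:conv3}, and here your argument has a real gap.

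You invoke as black boxes (i) a slicewise variable-exponent Poincar\'e/Korn inequality to deduce $u\in L^{s(t,x)}(\Omega_T)$, and (ii) uniform boundedness and strong convergence of the spatial mollifier on $L^{s(t,x)}(\Omega_T)$, citing \cite{cruz2013variable,diening2011lebesgue}. Those references treat exponents that are log-H\"older in \emph{all} variables; here $s$ is merely measurable in $t$. Your remedy is to apply the results for each fixed $t$ with uniform constants and then ``integrate in $t$ via dominated convergence''. This does not work as stated: $L^{s(t,x)}(\Omega_T)$ is not a Bochner space, and slicewise norm bounds $\|g(t,\cdot)\|_{L^{s(t,\cdot)}(\Omega)}\le C\|f(t,\cdot)\|_{L^{s(t,\cdot)}(\Omega)}$ do not integrate to a modular bound on $\Omega_T$. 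Converting norm to modular and back on each slice costs a factor $(\cdot)^{s_{\max}/s_{\min}}$, so from $\int_0^T\!\!\int_\Omega|Du|^{s(t,x)}\,dx\,dt<\infty$ you cannot conclude $\int_0^T\!\!\int_\Omega|u|^{s(t,x)}\,dx\,dt<\infty$ by Poincar\'e alone, and the same obstruction blocks the mollifier boundedness. (Also, you silently need a variable-exponent Korn inequality before Poincar\'e, since only $Du$ is assumed in $L^{s(t,x)}$.)

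The paper's proof is different and avoids the black boxes. It first records the crude bound $\|D(u^\varepsilon\psi)^\varepsilon\|_\infty\le E\varepsilon^{-(d+1)}$, then uses precisely this growth together with log-H\"older continuity in $x$ (Lemmas~\ref{lem:existence_of_minimizer}--\ref{lem:estimate_on_infimum}) to freeze the exponent: $|D(u^\varepsilon\psi)^\varepsilon|^{s(t,x)}\le 1+M\,|D(u^\varepsilon\psi)^\varepsilon|^{s(t,x^*)}$ for a point $x^*\in B_{3\varepsilon}(x)$. Two Jensen steps then dominate this by a double mollification of $|Du|^{s(t,\cdot)}$ plus a lower-order term in $|u|$. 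The latter is \emph{not} handled by Poincar\'e in $L^{s(t,x)}$; instead the paper uses the covering of Lemma~\ref{lem:decomposition_Omega} and the assumption $u\in L^\infty_tL^2_x$ to interpolate (Lemma~\ref{thm:interpolation}) up to $L^{r_i(t)}$ on each ball. All of this produces an explicit $L^1(\Omega_T)$ majorant for $|D(u^\varepsilon\psi)^\varepsilon|^{s(t,x)}$ that converges in $L^1$ (Lemma~\ref{lem:double_moll_conv}), whence uniform integrability and, with pointwise convergence, modular convergence in $L^{s(t,x)}$. In short, the missing idea in your proposal is to work at the modular (pointwise) level via the $L^\infty$ bound and exponent-freezing, and to use $u\in L^\infty_tL^2_x$ together with the local decomposition rather than a slicewise Poincar\'e.
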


\noindent The rest of this section is devoted to the proof of Theorem \ref{thm:approx_theorem}. \\

\noindent We start with the following decomposition of $\Omega$ which will be used throughout the whole paper.
\begin{lem}\label{lem:decomposition_Omega}
There exists $r>0$ and an open finite covering $\{\mathcal{B}^i_r\}_{i=1}^N$ of $\Omega$ by balls of radii $r$ such that if we define
$$
q_i(t) := \inf_{x \in \mathcal{B}_{2r}^i} s(t,x), \qquad r_i(t) := \sup_{x \in \mathcal{B}_{2r}^i} s(t,x), \qquad R_i(t) := q_i(t)\,\left(1+\frac{2}{d}\right)
$$
we have for all $i=1,\ldots, N$
$$
s_{\text{min}} \leq q_i(t) \leq s(t,x) \leq r_i(t) < R_i(t) 
\qquad \mbox{ on } (0,T) \times (\mathcal{B}^{i}_{2r} \cap \Omega)
$$
and
$$
R_i(t) - r_i(t) \geq \frac{s_{\text{min}}}{d}.
$$
\end{lem}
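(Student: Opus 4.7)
The plan is to extract a sufficiently fine cover of $\overline{\Omega}$ by balls whose radii are chosen so small that the log-H\"older continuity of $s(t,\cdot)$ forces the oscillation of $s$ on each doubled ball to be controlled by $s_{\min}/d$, uniformly in $t$. Since $\overline{\Omega}$ is compact, a finite subcover will then exist, and both required inequalities will follow from elementary arithmetic using Assumption~\ref{ass:usual_bounds_exp}.

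First, I would set up the quantitative choice of $r$. By Assumption~\ref{ass:exponent_cont_space}\ref{ass:cont}, there exists a constant $C$ such that for all $t\in[0,T]$ and all $x,y\in\Omega$ with $|x-y|<1/2$,
\begin{equation*}
|s(t,x)-s(t,y)| \le \frac{C}{|\log|x-y||}.
\end{equation*}
For any two points $x,y$ lying in a common ball $\mathcal{B}^i_{2r}$ one has $|x-y|\le 4r$, so choosing $r>0$ small enough that $4r<1/2$ and $C/|\log(4r)|\le s_{\min}/d$ yields, for every $t\in(0,T)$ and every $i$,
\begin{equation*}
r_i(t)-q_i(t) \;=\; \sup_{x\in\mathcal{B}^i_{2r}}s(t,x)-\inf_{x\in\mathcal{B}^i_{2r}}s(t,x) \;\le\; \frac{s_{\min}}{d}.
\end{equation*}
Such a choice of $r$ is purely quantitative and depends only on the log-H\"older constant $C$, on $d$ and on $s_{\min}$.

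Next, I would cover $\overline{\Omega}$ (which is compact because $\Omega$ is a bounded Lipschitz domain) by the family $\{\mathcal{B}_r(x)\}_{x\in\overline{\Omega}}$ of open balls of radius $r$ centred at points of $\overline{\Omega}$, and extract a finite subcover $\{\mathcal{B}^i_r\}_{i=1}^{N}$. The enlarged balls $\mathcal{B}^i_{2r}$ then give the sets on which $q_i(t)$ and $r_i(t)$ are defined, and the bound on $r_i(t)-q_i(t)$ above holds for each $i$.

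Finally, I would verify the two displayed inequalities. The chain $s_{\min}\le q_i(t)\le s(t,x)\le r_i(t)$ on $(0,T)\times(\mathcal{B}^i_{2r}\cap\Omega)$ is immediate from Assumption~\ref{ass:usual_bounds_exp} and the definitions of $q_i,r_i$. For the crucial inequality I compute
\begin{equation*}
R_i(t)-r_i(t) \;=\; q_i(t)\left(1+\tfrac{2}{d}\right)-r_i(t) \;=\; \tfrac{2}{d}\,q_i(t)-\bigl(r_i(t)-q_i(t)\bigr) \;\ge\; \tfrac{2}{d}\,s_{\min}-\tfrac{s_{\min}}{d} \;=\; \tfrac{s_{\min}}{d},
\end{equation*}
using $q_i(t)\ge s_{\min}$ and the oscillation bound from the previous step; the strict inequality $r_i(t)<R_i(t)$ then follows from $s_{\min}>0$. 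The only mildly delicate point is checking that the log-H\"older constant in Assumption~\ref{ass:exponent_cont_space}\ref{ass:cont} is genuinely uniform in $t$, which is exactly what is assumed; otherwise the whole proof reduces to picking one sufficiently small radius and invoking compactness, so no real obstacle arises.
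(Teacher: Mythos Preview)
Your proof is correct and follows essentially the same approach as the paper: choose $r$ small enough via log-H\"older continuity so that the oscillation of $s(t,\cdot)$ on each $\mathcal{B}^i_{2r}$ is at most $s_{\min}/d$, then use $q_i(t)\ge s_{\min}$ to conclude. Your write-up is in fact more explicit than the paper's, which merely states the oscillation bound and says ``the conclusion follows''; your arithmetic $R_i(t)-r_i(t)=\tfrac{2}{d}q_i(t)-(r_i(t)-q_i(t))\ge \tfrac{2}{d}s_{\min}-\tfrac{s_{\min}}{d}=\tfrac{s_{\min}}{d}$ spells out exactly what is meant.
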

\begin{proof}
We cover $\Omega$ with balls $\mathcal{B}_r^i$ of equal radius $r$ and the only problem is to find radius $r$ satisfying assertions of the lemma. By \ref{ass:cont} in Assumption \ref{ass:exponent_cont_space} we can choose $r$ such that
$$
\sup_{x \in \mathcal{B}_{2r}^i \cap \Omega} s(t,x) - \inf_{x \in \mathcal{B}_{2r}^i \cap \Omega} s(t,x) \leq \frac{s_{\text{min}}}{d}.
$$
Since $s_{\text{min}} \leq q_i(t)$, the conclusion follows.
\end{proof}
\begin{Not}\label{not:zeta}
In what follows, we always consider the covering constructed in Lemma~\ref{lem:decomposition_Omega}. We also write $\zeta_i$, $i = 1,\ldots, N$ for the partition of unity related to the open covering $\{\mathcal{B}^{i}_{r}\}$ of $\Omega$, that is $\mbox{supp}\,\zeta_i \subset \mathcal{B}^{i}_{r}$ and $\sum_{i=1}^N \zeta_i(x) = 1$ for all $x\in \Omega$.
\end{Not}

\noindent Let us first observe that the fact $Du \in L^{s(t,x)}(\Omega_T)$ implies certain regularity properties.
\begin{lem}\label{lem:simple_integrability_lemma}
Suppose that Assumption \ref{ass:exponent_cont_space} holds true. Let $u$ be such that $Du \in L^{s(t,x)}(\Omega_T)$, $u \in L^1_t W^{1,1}_{0,x}$ and  $u\in L^{\infty}_t L^2_x$. Then, $u \in L^{q_i(t)}(0,T; W^{1,q_i(t)}(\mathcal{B}_{2r}^i)) \cap L^{s_{\text{min}}}(0,T;W^{1,s_{\text{min}}}_0(\Omega))$ for all $i=1,\ldots, N$. The norm of $u$ in these spaces depends only on
$
\|Du\|_{L^{s(t,x)}}, \|u\|_{L^{\infty}_t L^2_x}.
$
\end{lem}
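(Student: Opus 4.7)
My plan is to derive the two claimed inclusions separately by reducing variable-exponent bounds to classical $L^p$ bounds via the elementary pointwise inequality $a^p \le 1 + a^q$ (valid for $a \ge 0$ and $1 \le p \le q$), and then invoking Korn's inequality in the appropriate form slicewise in $t$.

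\textbf{Global inclusion.} Since $s_{\text{min}} \le s(t,x)$ a.e.\ on $\Omega_T$, the elementary inequality yields $|Du|^{s_{\text{min}}} \le 1 + |Du|^{s(t,x)}$, whose integral over $\Omega_T$ is controlled by $\|Du\|_{L^{s(t,x)}(\Omega_T)}$ and $|\Omega_T|$; hence $Du \in L^{s_{\text{min}}}(\Omega_T)$. Because $u(t,\cdot) \in W^{1,1}_0(\Omega)$ for a.e.\ $t$, Korn's inequality for $W^{1,s_{\text{min}}}_0$-functions (applicable since $s_{\text{min}} > 1$) together with Poincar\'e's inequality gives $\|u(t)\|_{W^{1,s_{\text{min}}}_0(\Omega)} \le C \|Du(t)\|_{L^{s_{\text{min}}}(\Omega)}$. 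Raising to the $s_{\text{min}}$-th power and integrating in $t$ completes this part with norm depending only on $\|Du\|_{L^{s(t,x)}}$.

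\textbf{Local inclusion on $\mathcal{B}^i_{2r}$.} By Lemma~\ref{lem:decomposition_Omega} we have $q_i(t) \le s(t,x)$ on $(0,T) \times \mathcal{B}^i_{2r}$, so the same pointwise estimate gives
\[
\int_0^T \int_{\mathcal{B}^i_{2r} \cap \Omega} |Du(t,x)|^{q_i(t)} \, dx \, dt \le |\Omega_T| + \int_{\Omega_T} |Du|^{s(t,x)} \, dx \, dt < \infty.
\]
For a.e.\ $t$ I apply the standard Korn--Ne\v{c}as inequality on the Lipschitz domain $\mathcal{B}^i_{2r}$ with constant exponent $p = q_i(t)$,
\[
\|u\|_{W^{1,p}(\mathcal{B}^i_{2r})} \le C_p \bigl( \|u\|_{L^1(\mathcal{B}^i_{2r})} + \|Du\|_{L^p(\mathcal{B}^i_{2r})} \bigr),
\]
noting that $C_p$ is continuous in $p$ on $(1,\infty)$ and hence uniformly bounded on the compact range $[s_{\text{min}}, s_{\text{max}}]$. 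Since $\|u(t)\|_{L^1(\mathcal{B}^i_{2r})} \le C \|u(t)\|_{L^2(\Omega)} \le C \|u\|_{L^\infty_t L^2_x}$, raising to the $q_i(t)$-th power and integrating in $t$ combines with the previous bound to yield $u \in L^{q_i(t)}(0,T; W^{1,q_i(t)}(\mathcal{B}^i_{2r}))$ with the stated dependence of the norm on $\|Du\|_{L^{s(t,x)}}$ and $\|u\|_{L^\infty_t L^2_x}$.

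\textbf{Main obstacle.} The only nontrivial point is ensuring that the Korn constant on the ball is uniform in the variable exponent $p = q_i(t)$; this is automatic since $q_i$ takes values in the compact interval $[s_{\text{min}}, s_{\text{max}}] \subset (1, \infty)$ bounded away from the endpoints where Korn constants degenerate. Everything else is a routine slicewise application of classical inequalities combined with Fubini.
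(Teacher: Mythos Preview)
Your proof is correct and follows essentially the same approach as the paper: reduce the variable-exponent bound on $Du$ to a constant-exponent bound via $q_i(t)\le s(t,x)$ (respectively $s_{\text{min}}\le s(t,x)$), then apply Korn's inequality slicewise in $t$ with a constant uniform over the compact exponent range $[s_{\text{min}},s_{\text{max}}]$, controlling the lower-order term by $\|u\|_{L^\infty_t L^2_x}$. The paper's argument is identical in structure, only phrased more tersely.
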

\begin{proof}
As $q_i(t) \leq s(t,x)$ on $\mathcal{B}_{2r}^i$, we deduce $Du \in L^{q_i(t)}(0,T; L^{q_i(t)}(\mathcal{B}_{2r}^i))$. Then, the generalized K\"{o}rn inequality implies that (for fix $t$ and $i$)
$$
\|\nabla u\|_{L^{q_i(t)}(\mathcal{B}_{2r}^i)}\le C(r, s_{\text{min}}, s_{\text{max}})(\|D u\|_{L^{q_i(t)}(\mathcal{B}_{2r}^i)} + \|u\|_2).
$$
Then rasing the inequality to the $q_i(t)$ power, integrating over $t\in (0,T)$ and using the assumptions on $u$ we have the first part of the statement. The second statement can proved exactly in the same way using that $s_{\text{min}} \leq s(t,x)$ on $\Omega_T$ and the standard K\"{o}rn inequality for functions having zero trace.
\end{proof}

\noindent The most important tool is to approximate function $\xi \mapsto |\xi|^{s(t,x)}$ with functions independent of $x$ or $t$. This is obtained in the following lemmas.

\begin{lem}\label{lem:existence_of_minimizer}
Suppose that Assumption \ref{ass:exponent_cont_space} is satisfied. Then, for a.e. $t \in (0,T)$ and all balls $B_{\gamma}(x)$ such that $\overline{B_{\gamma}(x)} \cap \overline{\Omega}$ is nonempty, there exists $x^* \in \overline{B_{\gamma}(x)} \cap \overline{\Omega}$, $x^* = x^*(B_{\gamma}(x), t)$, such that for all $\xi$ with $|\xi| \geq 1$, we have
$$
\inf_{y \in \overline{B_{\gamma}(x)} \cap \overline{\Omega}} |\xi|^{s(t,y)}= |\xi|^{s(t,x^*)}.
$$
\end{lem}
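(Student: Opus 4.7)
The plan is to exploit the monotonicity of the exponential: for $|\xi|\ge 1$, the map $\sigma\mapsto |\xi|^\sigma = \exp(\sigma \log|\xi|)$ is non-decreasing in $\sigma$ since $\log|\xi|\ge 0$. Consequently, the infimum of $|\xi|^{s(t,y)}$ over $y$ in a given set is attained precisely where $s(t,\cdot)$ attains its minimum on that set. The crucial point is that a minimizer of $s(t,\cdot)$ can be chosen without reference to $\xi$, and the lemma then follows automatically for every admissible $\xi$.

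First I would fix $t\in[0,T]$ for which \ref{ass:cont} holds (this is every $t$ by Assumption \ref{ass:exponent_cont_space}, and in particular a.e.\ $t$). The log-H\"older estimate
$$
|s(t,y_1)-s(t,y_2)| \le -\frac{C}{\log|y_1-y_2|} \quad \text{for } y_1,y_2\in\Omega
$$
gives uniform continuity of $y\mapsto s(t,y)$ on $\Omega$, and hence a unique continuous extension to $\overline{\Omega}$. The set $K:=\overline{B_\gamma(x)}\cap\overline{\Omega}$ is nonempty and compact, so by the Weierstrass theorem there exists $x^*=x^*(B_\gamma(x),t)\in K$ with
$$
s(t,x^*) = \inf_{y\in K} s(t,y).
$$

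Now fix any $\xi\in\symm$ (or $\mathbb{R}$, scalar case suffices) with $|\xi|\ge 1$. For every $y\in K$ we have $s(t,y)\ge s(t,x^*)$, and multiplying by $\log|\xi|\ge 0$ and exponentiating yields
$$
|\xi|^{s(t,y)} \ge |\xi|^{s(t,x^*)},
$$
with equality at $y=x^*$. Taking the infimum over $y\in K$ gives
$$
\inf_{y\in \overline{B_\gamma(x)}\cap\overline{\Omega}} |\xi|^{s(t,y)} = |\xi|^{s(t,x^*)},
$$
which is the claim.

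There is essentially no obstacle beyond observing that the restriction $|\xi|\ge 1$ is exactly what makes the minimizer $x^*$ of the auxiliary problem independent of $\xi$: had we allowed $|\xi|<1$, the map $\sigma\mapsto|\xi|^\sigma$ would be non-increasing, and the infimum would instead be attained where $s(t,\cdot)$ is maximized, forcing a different point in $K$. Under the standing hypothesis $|\xi|\ge 1$, a single $x^*$ works uniformly in $\xi$, which is precisely what the later approximation arguments in Section~\ref{section:3} will require.
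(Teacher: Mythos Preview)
Your proof is correct and follows essentially the same approach as the paper: both arguments reduce the problem to finding a minimizer of $s(t,\cdot)$ on the compact set $\overline{B_\gamma(x)}\cap\overline{\Omega}$ and then exploit the monotonicity of $\sigma\mapsto|\xi|^\sigma$ for $|\xi|\ge 1$. Your version is slightly more detailed in justifying the continuous extension of $s(t,\cdot)$ to $\overline{\Omega}$ and in explaining why the restriction $|\xi|\ge 1$ is essential for $x^*$ to be independent of $\xi$, but the underlying reasoning is identical.
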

\begin{rem}
Note carefully that the minimizing point $x^*$ is independent of $\xi$.
\end{rem}
\begin{proof}[Proof of Lemma \ref{lem:existence_of_minimizer}]
First, we note that for a.e. $t \in (0,T)$, the map $x \mapsto s(t,x)$ is continuous and so is the map $x \mapsto|\xi|^{s(t,x)}$ (for fixed $\xi$). Using compactness of $\overline{B_{\gamma}(x)} \cap \overline{\Omega}$ and $|\xi|>1$, we have
$$
\inf_{y \in \overline{B_{\gamma}(x)} \cap \overline{B}} |\xi|^{s(t,y)} =  |\xi|^{\inf_{y \in \overline{B_{\gamma}(x)} \cap \overline{\Omega}} s(t,y)}
$$
and we choose ${x}^*$ such that $\inf_{y \in \overline{B_{\gamma}(x)} \cap \overline{\Omega}} s(t,y) = s(t,x^*)$.
\end{proof}

\begin{lem}\label{lem:estimate_on_infimum}
Let $E>0$ be given. Then, there exists a constant $M = M(E)$, such that
$$
|\xi|^{s(t,y)} \leq M \, \inf_{z \in \overline{B_{\gamma}(x)} \cap \overline{\Omega}} |\xi|^{s(t,z)} \leq M \, |\xi|^{s(t,y)}
$$
for all balls $B_{\gamma}(x)$, all $y \in \overline{B_{\gamma}(x)} \cap \overline{\Omega}$, all $\xi \in [1,E\, \gamma^{-(d+1)}]$ and all $\gamma \in \left(0,\frac{1}{2}\right)$.
\end{lem}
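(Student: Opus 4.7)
The right-hand inequality is immediate: since $y \in \overline{B_\gamma(x)} \cap \overline{\Omega}$, the infimum over this set cannot exceed $|\xi|^{s(t,y)}$, so in fact it holds with $M=1$. All the work is in the left-hand inequality. My plan is to invoke Lemma~\ref{lem:existence_of_minimizer} to realize the infimum at some point $x^* = x^*(B_\gamma(x),t) \in \overline{B_\gamma(x)} \cap \overline{\Omega}$, so that $\inf_z |\xi|^{s(t,z)} = |\xi|^{s(t,x^*)}$, and then factor
\[
|\xi|^{s(t,y)} = |\xi|^{s(t,x^*)} \cdot |\xi|^{s(t,y)-s(t,x^*)}.
\]
The task is therefore to bound the second factor by a constant depending only on $E$.

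Since $x^*$ is the minimizer of $s(t,\cdot)$ on $\overline{B_\gamma(x)} \cap \overline{\Omega}$ and $|\xi| \geq 1$, we have $s(t,y)-s(t,x^*) \geq 0$. Moreover $|y - x^*| \leq 2\gamma < 1$, so the log-H\"older assumption \ref{ass:cont} gives
\[
0 \leq s(t,y) - s(t,x^*) \leq \frac{C}{|\log(2\gamma)|}.
\]
Combining this with the bound $1 \leq |\xi| \leq E\,\gamma^{-(d+1)}$ yields
\[
|\xi|^{s(t,y)-s(t,x^*)} \leq \bigl(E\,\gamma^{-(d+1)}\bigr)^{C/|\log(2\gamma)|}.
\]

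The main (and only) point is then to verify that the right-hand side is bounded uniformly in $\gamma \in (0,1/2)$ by a constant depending only on $E$ (and on the log-H\"older constant, which is data). Taking the logarithm,
\[
\log\Bigl[\bigl(E\,\gamma^{-(d+1)}\bigr)^{C/|\log(2\gamma)|}\Bigr] = \frac{C\,\log E}{|\log(2\gamma)|} + C(d+1)\,\frac{|\log\gamma|}{|\log(2\gamma)|}.
\]
As $\gamma\to 0^+$ the first term vanishes, and $|\log\gamma|/|\log(2\gamma)| = |\log\gamma|/(|\log\gamma|+\log 2) \to 1$, so both terms are bounded on $(0,1/2)$ by a constant depending only on $E$. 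Exponentiating produces the desired $M = M(E)$ and finishes the argument.

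There is no real obstacle here: the only nontrivial computation is the elementary log-H\"older estimate above, which is precisely the point of pairing the exponent gap $\sim 1/|\log\gamma|$ with the polynomial blow-up $\gamma^{-(d+1)}$ of the admissible range of $|\xi|$. This interplay between the permitted size of $|\xi|$ and the modulus of continuity of $s(t,\cdot)$ is the standard reason log-H\"older continuity is the natural condition in variable exponent spaces, and it is exactly what is exploited here.
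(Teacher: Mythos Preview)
Your proof is correct and follows essentially the same approach as the paper: both bound the ratio $|\xi|^{s(t,y_1)}/|\xi|^{s(t,y_2)}$ via the log-H\"older condition and the constraint $|\xi|\le E\gamma^{-(d+1)}$, then check the resulting expression is bounded uniformly in $\gamma\in(0,1/2)$. The only cosmetic difference is that the paper compares two arbitrary points $y_1,y_2\in B_\gamma(x)$ directly rather than invoking Lemma~\ref{lem:existence_of_minimizer} to single out the minimizer $x^*$; this avoids the extra reference but is otherwise identical in substance.
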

\begin{proof}
Let $y_1, y_2 \in B_{\gamma}(x)$. As $|\xi| \geq 1$, we have
$$
\frac{|\xi|^{s(t,y_1)}}{|\xi|^{s(t,y_2)}} = |\xi|^{s(t,y_1) - s(t,y_2)} \leq |\xi|^{|s(t,y_1) - s(t,y_2)|}
$$
Using log-H\"older continuity (\ref{ass:exponent_cont_space}) in Assumption \ref{ass:exponent_cont_space}, we get
$$
|\xi|^{|s(t,y_1) - s(t,y_2)|}  \leq |\xi|^{-\frac{C}{\log|y_1-y_2|}} \leq \left(E\, \gamma^{-(d+1)}\right)^{-\frac{C}{\log \gamma}} = E^{-\frac{C}{\log \gamma}} \, \gamma^{\frac{C\,(d+1)}{\log \gamma}} \le E^{\frac{C}{\log 2}} \, e^{C\,(d+1)}.
$$
The conclusion follows from setting $M:= E^{\frac{C}{\log 2}} \, e^{C\,(d+1)}$.
\end{proof}

\begin{proof}[Proof of Theorem \ref{thm:approx_theorem}]
Properties \ref{thmitem:conv1} and \ref{thmitem:conv2} follow from the standard properties of the convolutions. To see \ref{thmitem:conv3}, we first estimate $ D\left(u^{\varepsilon} \psi\right)^{\varepsilon}$ in $L^{\infty}$ norm. By product rule,
$$
D\left(u^{\varepsilon} \psi\right)^{\varepsilon} = \left(D u^{\varepsilon} \,  \psi\right)^{\varepsilon} + \frac{\left(u^{\varepsilon} \otimes \nabla \psi+\nabla \psi \otimes u^{\varepsilon}\right)^{\varepsilon}}{2}.
$$
The first term on the right hand side can be estimated with the help of Young's inequality as
\begin{multline*}
\left\| \left(D u^{\varepsilon} \, \psi\right)^{\varepsilon} \right\|_{L^\infty_t L^{\infty}_x} \leq \left\|D u^{\varepsilon} \psi \right\|_{L^\infty_t L^{\infty}_x}
\leq \left\| \psi \right\|_{\infty} \, \left\|D u^{\varepsilon} \right\|_{L^\infty_t L^{\infty}_x}
\leq \\ \leq \left\| \psi \right\|_{\infty} \, \left\|u \right\|_{L^\infty_t L^{1}_x} \, \|D\eta_{\varepsilon}\|_{\infty}  \leq
\left\| \psi \right\|_{\infty} \, \left\|u \right\|_{L^\infty_t L^{1}_x} \, \|D\eta\|_{\infty} \, \frac{1}{\varepsilon^{d+1}}.
\end{multline*}
Similarly, we have for the second term
$$
\left\| \left(u^{\varepsilon} \otimes \nabla  \psi\right)^{\varepsilon} \right\|_{L^\infty_t L^{\infty}_x} \leq \left\|u^{\varepsilon} \otimes \nabla  \psi \right\|_{L^\infty_t L^{\infty}_x}
\leq \left\| D \psi \right\|_{\infty} \, \left\|u^{\varepsilon} \right\|_{L^\infty_t L^{\infty}_x}
\leq \left\| D \psi \right\|_{\infty} \, \left\| u \right\|_{L^\infty_t L^{1}_x} \, \|\eta\|_{\infty} \, \frac{1}{\varepsilon^d}.
$$
It follows that there exists a constant $E$ depending on $W^{1,\infty}$ norm of $\psi$, $L^{\infty}_t L^2_x$ norm of $u$, such that
\begin{equation}\label{cislo}
\| D\left(u^{\varepsilon} \psi\right)^{\varepsilon} \|_{\infty} \leq \frac{E}{\varepsilon^{d+1}}.
\end{equation}
Now, we estimate $\left|D\left(u^{\varepsilon} \psi\right)^{\varepsilon}\right|^{s(t,x)}$. Clearly, when $\left|D\left(u^{\varepsilon} \psi\right)^{\varepsilon}\right| \leq 1$, we have
\begin{equation}\label{eq:estimate_gradient_small_arg}
\left|D\left(u^{\varepsilon} \psi\right)^{\varepsilon}\right|^{s(t,x)} \leq 1.
\end{equation}
Suppose that $\left|D\left(u^{\varepsilon} \psi\right)^{\varepsilon}\right| \geq 1$. We fix $x \in \Omega$ and consider a ball $B_{3\,\varepsilon}(x)$. Then, from Lemmas \ref{lem:existence_of_minimizer},  \ref{lem:estimate_on_infimum} and the estimate \eqref{cislo} we obtain minimizing point $x^* \in B_{3\,\varepsilon}(x)$, such that
\begin{equation}\label{eq:estimate_gradient_large_arg}
\left|D\left(u^{\varepsilon} \psi\right)^{\varepsilon}\right|^{s(t,x)} \leq
M \, \left|D\left(u^{\varepsilon} \psi\right)^{\varepsilon}\right|^{s(t,x^*)}.
\end{equation}
Combining two estimates \eqref{eq:estimate_gradient_small_arg} and \eqref{eq:estimate_gradient_large_arg}, we deduce
\begin{equation}\label{eq:estimate_gradient_large_and_small}
\left|D\left(u^{\varepsilon} \psi\right)^{\varepsilon}\right|^{s(t,x)} \leq 1 + M \, \left|D\left(u^{\varepsilon} \psi\right)^{\varepsilon}\right|^{s(t,x^*)}.
\end{equation}
Next, the function $v \mapsto |v|^{s(t,x^*)}$ is convex. Therefore, Jensen's inequality implies
\begin{align*}
\left|D\left(u^{\varepsilon} \psi\right)^{\varepsilon}\right|^{s(t,x^*)}(t,x) &=
\left|\int_{B_{\varepsilon}(0)} D\left(u^{\varepsilon}(t,x-y) \, \psi(x-y)\right) \, \eta_{\varepsilon}(y) \diff y\right|^{s(t,x^*)}\\
&\leq
\int_{B_{\varepsilon}(0)} \left|D\left(u^{\varepsilon}(t,x-y) \, \psi(x-y)\right)\right|^{s(t,x^*)} \, \eta_{\varepsilon}(y) \diff y \\
&\le C(s_{\max})\, \int_{B_{\varepsilon}(0)} \left|D u^{\varepsilon}(t,x-y) \, \psi(x-y) \right|^{s(t,x^*)} \, \eta_{\varepsilon}(y) \diff y \, + \\
&\phantom{= }+ C(s_{\max}) \, \int_{B_{\varepsilon}(0)} \left| u^{\varepsilon}(t,x-y) \otimes  \nabla \psi(x-y) \right|^{s(t,x^*)} \, \eta_{\varepsilon}(y) \diff y \\
& \leq C(s_{\max})\, \|\psi\|_{\infty}^{s_{\text{max}}} \,\int_{B_{\varepsilon}(0)} \left|D u^{\varepsilon}(t,x-y) \right|^{s(t,x^*)} \, \eta_{\varepsilon}(y) \diff y \, + \\
& \phantom{\leq} \, +C(s_{\max}) \, \|\nabla \psi\|^{s_{\text{max}}} \,\int_{B_{\varepsilon}(0)} \, \left|u^{\varepsilon}(t,x-y) \right|^{s(t,x^*)} \, \eta_{\varepsilon}(y) \diff y
:= A+B.
\end{align*}
Concerning term $A$, we expand convolution, then we apply Jensen's inequality again and finally we observe that $x - y - z \in B_{3\,\varepsilon}(x)$, which allows us to apply Lemma \ref{lem:estimate_on_infimum} to get
\begin{equation}\label{eq:estimate_with_aeps_at_the_end}
\begin{split}
A &\leq C \, \| \psi \|_{\infty}^{s_{\text{max}}} \, \int_{B_{\varepsilon}(0)} \int_{B_{\varepsilon}(0)} \left|D u(t,x-y-z) \right|^{s(t,x^*)} \,   \eta_{\varepsilon}(y) \,  \eta_{\varepsilon}(z) \diff y \diff z\\
&\leq C \, \| \psi \|_{\infty}^{s_{\text{max}}} \,\int_{B_{\varepsilon}(0)} \int_{B_{\varepsilon}(0)} \left|D u(t,x-y-z) \right|^{s(t,x-y-z)} \,   \eta_{\varepsilon}(y) \,  \eta_{\varepsilon}(z) \diff y \diff z =: a_{\varepsilon}(t,x)
\end{split}
\end{equation}
Concerning term $B$, we proceed similarly, in the last step using partition of unity $\{\zeta_i\}$ from Notation~\ref{not:zeta}:
\begin{equation}\label{eq:estimate_with_beps_at_the_end}
\begin{split}
B &\leq C\, \|\nabla \psi\|_{\infty}^{s_{\text{max}}} \, \int_{B_{\varepsilon}(0)} \int_{B_{\varepsilon}(0)}  \left| u(t,x-y-z) \right|^{s(t,x^*)} \,   \eta_{\varepsilon}(y) \,  \eta_{\varepsilon}(z) \diff y \diff z\\
&\leq C\, \|\nabla \psi\|_{\infty}^{s_{\text{max}}} \, \int_{B_{\varepsilon}(0)} \int_{B_{\varepsilon}(0)}  \left(1 + \left| u(t,x-y-z) \right|^{s(t,x-y-z)}\right) \,   \eta_{\varepsilon}(y) \,  \eta_{\varepsilon}(z) \diff y \diff z\\
&\leq C\sum_{i=1}^N \zeta_i(x) \, \| \nabla \psi\|_{\infty}^{s_{\text{max}}} \, \int_{B_{\varepsilon}(0)} \int_{B_{\varepsilon}(0)}  \left(1 + \left| u(t,x-y-z) \right|^{r_i(t)}\right) \,   \eta_{\varepsilon}(y) \,  \eta_{\varepsilon}(z) \diff y \diff z\\
&=: C\sum_{i=1}^N b_{\varepsilon}^i(t,x).\phantom{\int_{B_{\varepsilon}(0)}}
\end{split}
\end{equation}
where in the last line we used that when $x \in \mathcal{B}^{i}_{r} \cap \Omega$, we have $s(t,x-y-z) \leq r_i(t)$ as long as $2\varepsilon < r$. Combining \eqref{eq:estimate_gradient_large_and_small}, \eqref{eq:estimate_with_aeps_at_the_end}, \eqref{eq:estimate_with_beps_at_the_end} we obtain
\begin{equation}\label{eq:uniform_integrability_gradient_in_L1}
0 \leq \left|D\left(u^{\varepsilon} \psi\right)^{\varepsilon}\right|^{s(t,x)} \leq C\left(1 + a_{\varepsilon}(t,x) + \sum_{i=1}^N b^i_{\varepsilon}(t,x)\right).
\end{equation}
Now, the map $(t,x) \mapsto |Du(t,x)|^{s(t,x)} \in L^1(\Omega_T)$ so that due to Lemma \ref{lem:double_moll_conv}, $a_{\varepsilon}$ is convergent in $L^1(\Omega_T)$. Similarly, Lemma \ref{lem:simple_integrability_lemma} together with interpolation result from Lemma \ref{thm:interpolation} shows that the map $(t,x) \mapsto |u(t,x)|^{r_i(t)} \in L^1((0,T)\times \mathcal{B}^{i}_{2r})$. Thanks to Lemma \ref{lem:double_moll_conv}, $b_{\varepsilon}^i$ is also convergent in $L^1((0,T)\times \mathcal{B}^{i}_{r})$ so that taking into account the supports of $\zeta_i$, $b_{\varepsilon}^i$ is also convergent in $L^1(\Omega_T)$. Now, from \eqref{eq:uniform_integrability_gradient_in_L1}, we deduce that the map $(t,x) \mapsto \left|D\left(u^{\varepsilon} \psi\right)^{\varepsilon}\right|^{s(t,x)}$ is uniformly integrable. Together with pointwise convergence as $\varepsilon \to 0$, this is sufficient to conclude $D\left(u^{\varepsilon} \psi\right)^{\varepsilon} \to D(u\, \psi)$ in $L^{s(t,x)}(\Omega_T)$.
\end{proof}

\section{Local energy equality}\label{section:4}

\noindent First, we  discuss a general abstract identity
\begin{align}\label{eq:gen_identity_def}
    \int_{\Omega_T}-u \cdot \p_t \phi - u\otimes u:\nabla\phi + (\alpha +\theta\, \beta) : D\phi  \diff x \diff t = \int_{\Omega}u_0(x) \cdot\phi(0, x) \diff x + \int_{\Omega_T}f  \cdot\phi \diff x \diff t
\end{align}
which is required to be satisfied for any vector-valued $\phi \in C^\infty_c([0, T) \times \Omega)$ fulfilling $\DIV \phi = 0$ in $\Omega_T$. Here, $\alpha, \beta: \Omega_T \to \symm$, $\theta \geq 0$ and the term $\theta \, \beta$ can be seen as a regularizing term. In case $\theta = 0$ we are inspired by our setting and assume only
\begin{equation}\label{eq:regularity_estimates_u}
u \in L^\infty_t L^2_x \cap L^{R_i(t)}((0,T)\times \mathcal{B}^i_{2r}) \cap L^{s_0}_{t,x}, \qquad Du \in L^{s(t,x)}(\Omega_T)
\end{equation}
\begin{equation}\label{eq:regularity_estimates_not_u_theta_0}
\alpha \in L^{s'(t,x)}(\Omega_T) \mbox{ is a symmetric matrix}, \qquad f\in L^{1}_t L^2_x,
\end{equation}
where, we recall \eqref{s0}, i.e., that  $s_0 = 3 + \frac{2}{d}$. In case $\theta > 0$, we can additionally assume that
\begin{equation}\label{eq:regularity_estimates_not_u_2}
\beta \in L^{s_{\max}'}(\Omega_T) \mbox{ is a symmetric matrix}, \quad Du \in L^{s_{\max}}(\Omega_T), \qquad \theta > 0.
\end{equation}
Nevertheless, the uniform estimates can be obtained only in terms of~\eqref{eq:regularity_estimates_u}--\eqref{eq:regularity_estimates_not_u_theta_0} as we expect to lose~\eqref{eq:regularity_estimates_not_u_2} when $\theta \to 0$. This becomes more visible in the next section when we apply results obtained for the particular $\alpha$ and $\beta$. Notice also here, that $s_0\le R_i(t)$, which follows from the definition of $R_i(t)$ in Lemma~\ref{lem:decomposition_Omega}, the definition of $s_0$ in \eqref{s0} and the assumption $s_{\text{min}}\ge \frac{3d+2}{d+2}$. \\

\noindent Our first target is to transform this identity to be satisfied by all test functions, not necessarily divergence-free. Here, the main difficulty is that we consider the problem with Dirichlet boundary condition. Hence, we cannot apply Hodge decomposition theorem. Instead, we apply Ne\v{c}as theorem and use the method of harmonic pressure cf. \cite{MR2305828,MR1602949,MR2668872}.\\

\noindent First, we extend all functions to $\R^d$ with zero and we define $p_1^i$, $p_2^i$, $p_3$, $p_4$ as the unique functions satisfying for almost all $t\in (0,T)$
\begin{align}
 -\Delta p_{1}^i &= \DIV \DIV (\alpha \, \zeta_i) \mbox{ in } {\R^d}, \quad   &&{p_1^i(t,\cdot) \in L^{r_i'(t)}(\R^d)}, \label{eq:defp1A}\\
 \Delta p_{2}^i &= \DIV \DIV (u\otimes u \, \zeta_i) \mbox{ in } \R^d, \quad   &&{p_2^i(t,\cdot) \in L^{R_i(t)/2}(\R^d)}, \label{eq:defp2A}\\
 -\Delta p_3 &= \DIV f \mbox{ in } \R^d, \quad  &&p_3(t,\cdot) \in W^{1,2}_{\textrm{loc}}(\R^d), \label{eq:defp3} \\
 -\Delta p_4 &= \DIV \DIV (\theta\, \beta) \mbox{ in } {\R^d}, \quad  &&{p_4(t,\cdot) \in L^{s'_{\max}}(\R^d)}. \label{eq:defp4}
\end{align}

\noindent These functions have to be defined globally as we expect them to have only Lebesgue regularity so their trace is not well-defined. The main result of this section reads:
\begin{thm}\label{thm:local_energy_equality}
Let $u$ be a solution of \eqref{eq:gen_identity_def}. Then, with $p_i$ as above,
there exists a uniquely determined function $p_h \in L^{\infty}(0,T; L^{s'_{\max}}(\Omega))$ (up to condition $\int_{\Omega} p_h(t,x) \diff x  = 0$ for almost all $t\in (0,T)$), such that for all $\varphi \in C^1_c([0,T)\times \Omega)$
\begin{equation}\label{eq:distributional_id_with_ph}
\begin{split}
-\int_{\Omega_T} &(u+\nabla p_h)\cdot \partial_t\varphi +(- u \otimes u + \alpha + \theta \beta) : \nabla \varphi \diff x \diff t-\int_{\Omega} u_0(x)\cdot \varphi(0,x)  \diff x \\
&= \int_{\Omega_T} f\, \cdot \varphi  - \sum_{i=1}^N (p_1^i + p_2^i) \,\DIV \varphi -  (p_3 + p_4) \, \DIV \varphi \diff x \diff t
\end{split}
\end{equation}
Moreover, $p_h$ is harmonic and so, it is locally smooth in the spatial variable, i.e., it satisfies
\begin{equation}\label{eq:pressure_is_harmonic}
\Delta p_h = 0 \mbox{ in } \Omega \mbox{ for a.e. } t \in [0,T].
\end{equation}
In addition, we have the following estimate valid for all $\Omega' \Subset \Omega$
\begin{equation}\label{eq:thm_estimate_on_p_Hol}
\begin{split}
\|p_h\|_{L^{\infty}_t L^{s'_{\max}}_x}+ \|p_h\|_{L^{\infty}(0,T;C^k(\Omega'))} \leq C,
\end{split}
\end{equation}
where the constant $C$ depends on $k$, $\Omega'$, $T$ and the norms $\|\alpha\|_{L^{s'(t,x)}_{t,x}}$, $\|Du\|_{L^{s(t,x)}_{t,x}}$, $\|\theta \, \beta\|_{L^{s'_{\max}}_{t,x}}$,
 $\|f\|_{L^{1}_t L^2_x}$, $\|u\|_{L^{\infty}_t L^2_x}$ and $\|u_0\|_{L^{2}_x}$. Finally, the following local energy equality holds: for any $\psi\in C^\infty_c(\Omega)$ and for a.e. $t \in (0,T)$
\begin{equation}\label{eq:local_energy_equality}
\begin{split}
    &\frac{1}{2}\int_\Omega |u(t, x) + \nabla p_h(t, x)|^2 \, \psi(x) \diff x + \int_{0}^t \int_\Omega  (\alpha + \theta \beta):D(\psi(x)(u + \nabla p_h)(\tau,x)) \diff x \diff \tau = \\
    &= \frac{1}{2}\int_{\Omega}  |u_0(x)|^2 \, \psi(x) \diff x + \int_{0}^t \int_\Omega  (u\otimes u) : \nabla (\psi (u + \nabla p_h))\diff x\diff \tau + \\
    &\phantom{=} +  \int_{0}^t\int_{\Omega}f(u + \nabla p_h)\psi\diff x \diff \tau - \int_{0}^t \int_\Omega  \left(\sum_{i=1}^N (p_1^i + p_2^i) + p_3 + p_4\right)\,(u + \nabla p_h )\nabla\psi \diff x \diff \tau
\end{split}
\end{equation}
\end{thm}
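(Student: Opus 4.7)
The proof naturally splits into two halves: constructing the harmonic pressure $p_h$ so that \eqref{eq:distributional_id_with_ph}--\eqref{eq:thm_estimate_on_p_Hol} hold, and then upgrading the resulting identity to the pointwise-in-time local energy equality \eqref{eq:local_energy_equality}.

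\emph{Pressure recovery.} I would interpret \eqref{eq:gen_identity_def} as the vanishing of a linear functional $L$ on divergence-free test fields $\phi \in C^\infty_c([0,T)\times\Omega)$, and apply the Ne\v{c}as / de Rham lemma, working with a time-antiderivative to sidestep the distributional character of $\partial_t u$, to produce a pressure $P$ representing $L$ via $L(\phi) = -\int_{\Omega_T} P\,\DIV\phi$. Formally this yields the PDE $\partial_t u + \DIV(u\otimes u - \alpha - \theta\beta) - f = -\nabla P$; taking its divergence and using $\DIV u = 0$ gives
$$-\Delta P = \DIV\DIV(u\otimes u - \alpha - \theta\beta) - \DIV f \quad \text{in }\Omega.$$
Summing the Poisson equations \eqref{eq:defp1A}--\eqref{eq:defp4} and using $\sum_i\zeta_i \equiv 1$ on $\Omega$, the right-hand side above equals $\Delta\bigl(\sum_{i=1}^N(p_1^i + p_2^i) + p_3 + p_4\bigr)$. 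Hence $P + \sum(p_1^i + p_2^i) + p_3 + p_4$ is harmonic in $\Omega$ for a.e. $t$, and absorbing (a time-antiderivative of) it into the velocity via $u \mapsto u + \nabla p_h$ defines $p_h$ uniquely once the normalization $\int_\Omega p_h\,\diff x = 0$ is imposed, and produces \eqref{eq:distributional_id_with_ph}--\eqref{eq:pressure_is_harmonic}. Calder\'on--Zygmund estimates applied to \eqref{eq:defp1A}--\eqref{eq:defp4}, together with the integrability assumptions \eqref{eq:regularity_estimates_u}--\eqref{eq:regularity_estimates_not_u_2}, give the $L^\infty_t L^{s'_{\max}}_x$ bound on $p_h$, and interior regularity for harmonic functions yields the $C^k(\Omega')$ bound in \eqref{eq:thm_estimate_on_p_Hol}.

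\emph{Local energy equality.} Formally one would test \eqref{eq:distributional_id_with_ph} with $\varphi = \psi(u + \nabla p_h)\mathbf{1}_{[0,t]}$; since this has insufficient regularity in time, I would replace it by the smooth approximation $\varphi_{\varepsilon,\eta} := \psi\,\mathcal{R}^\varepsilon(u + \nabla p_h)\,\chi^\eta_t$, where $\chi^\eta_t$ is a smooth cutoff tending to $\mathbf{1}_{[0,t]}$ and $\mathcal{R}^\varepsilon$ is the time mollification of Definition~\ref{res:mol_in_ti}, applied to $u + \nabla p_h$ extended by $u_0$ for $t < 0$. Symmetry of the kernel $\zeta_\varepsilon$ converts $-\int(u+\nabla p_h)\partial_t\varphi_{\varepsilon,\eta}$ into $\tfrac12\partial_t|\mathcal{R}^\varepsilon(u+\nabla p_h)|^2$ terms plus controllable commutator errors. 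Letting $\eta \to 0$ selects, via Lebesgue differentiation, times at which the identity holds pointwise, and then $\varepsilon \to 0$, using the interior smoothness of $p_h$ and the integrability \eqref{eq:regularity_estimates_u}--\eqref{eq:regularity_estimates_not_u_2}, produces \eqref{eq:local_energy_equality}; the choice of extending $u$ by $u_0$ beyond $t=0$ ensures that at the initial time only $|u_0|^2$ (and not $|u_0 + \nabla p_h(0)|^2$) appears.

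\emph{Main obstacle.} The most delicate step will be passing to the limit $\varepsilon \to 0$ in the convective and stress terms. The convective term $\int u\otimes u:\nabla(\psi\,\mathcal{R}^\varepsilon(u+\nabla p_h))$ forces the Ladyzhenskaya--Prodi--Serrin-type bound $u \in L^{s_0}_{t,x}$ with $s_0 = 3+2/d$ required in \eqref{eq:regularity_estimates_u}, which is precisely what makes the triple product $L^1$-integrable and allows a Friedrichs-commutator estimate to pass to the limit. The variable-exponent stress term $\int(\alpha+\theta\beta):D(\psi\,\mathcal{R}^\varepsilon(u+\nabla p_h))$ is controlled through the local higher integrability $u \in L^{R_i(t)}((0,T)\times\mathcal{B}^i_{2r})$ with margin $R_i(t) - r_i(t) \geq s_{\text{min}}/d$ from Lemma~\ref{lem:decomposition_Omega}, providing exactly the room needed to apply dominated convergence in the variable-exponent space $L^{s(t,x)}(\Omega_T)$, mirroring the argument used in the proof of Theorem~\ref{thm:approx_theorem}.
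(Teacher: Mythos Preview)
Your pressure-recovery sketch is essentially the paper's argument, but the local energy equality part has a genuine gap that goes to the heart of what makes this paper nontrivial.

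You propose the test function $\varphi_{\varepsilon,\eta} = \psi\,\mathcal{R}^\varepsilon(u+\nabla p_h)\,\chi^\eta_t$, mollifying \emph{only in time}. This fails for two linked reasons. First, $\varphi_{\varepsilon,\eta}$ is not spatially smooth (it inherits only the $W^{1,s_{\min}}$-type regularity of $u$), so it is not an admissible test function in \eqref{eq:distributional_id_with_ph}, which demands $\varphi\in C^1_c([0,T)\times\Omega)$. Second, and more fundamentally, the whole premise of the paper is that $s(t,x)$ is merely measurable in $t$. Time mollification does \emph{not} preserve $L^{s(t,x)}$ in that setting: there is no analogue of Theorem~\ref{thm:approx_theorem} for $\mathcal{R}^\varepsilon$, and one cannot show $\mathcal{R}^\varepsilon Du \to Du$ modularly in $L^{s(t,x)}$ (or even that $\mathcal{R}^\varepsilon Du$ lies in $L^{s(t,x)}$ with a uniform bound) without some continuity of $s$ in time. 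The margin $R_i(t)-r_i(t)\ge s_{\min}/d$ you invoke from Lemma~\ref{lem:decomposition_Omega} is a statement about the \emph{spatial} oscillation of $s$ on small balls and gives no control over its temporal jumps; it cannot rescue the stress term $\int \alpha:D(\psi\,\mathcal{R}^\varepsilon(u+\nabla p_h))$ when $\theta=0$.

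The paper avoids this by mollifying in \emph{space}, using precisely the double mollification $(u^\varepsilon\psi)^\varepsilon$ of Theorem~\ref{thm:approx_theorem}: the test function is
\[
\phi = \bigl(\mathcal{R}^\delta(u^\varepsilon+\nabla p_h^\varepsilon)\,\psi\,\gamma^\tau_{-\eta,\varrho}\bigr)^\varepsilon,
\]
which is spatially smooth for each $\varepsilon>0$. The time mollification $\mathcal{R}^\delta$ is present only to handle the $\partial_t$ term and is removed \emph{first}, while everything is still $C^\infty$ in $x$; the delicate $\varepsilon\to 0$ limit in the stress term is then handled entirely by the spatial approximation Theorem~\ref{thm:approx_theorem}, which uses the log-H\"older continuity of $s$ in $x$. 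Your proposal has the mollification on the wrong variable.
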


\noindent The rest of this section is devoted to the proof of Theorem \ref{thm:local_energy_equality}. First, we establish regularity of $p_1^i$, $p_2^i$, $p_3$, $p_4$.

\begin{lem}\label{lem:integrability_of_pi}
There exists uniquely determined $p_1^i$, $p_2^i$, $p_3$, $p_4$ satisfying \eqref{eq:defp1A}--\eqref{eq:defp4}. Moreover, there exists a constant depending only on $T$ and ${\Omega}$ such that
\begin{align*}
&\|p_{1}^i\|_{L^{r_i'(t)}_{t,x}} \leq C \|\alpha \, \zeta_i\|_{L^{r_i'(t)}_{t,x}},    &&\|p_{2}^i\|_{L^{R_i(t)/2}_{t,x}} \leq C \|u \, \sqrt{\zeta_i}\|_{L^{R_i(t)}_{t,x}}^2,\\
&\|p_{1}^i\|_{L^{{s'_{\max}}}_{t,x}} \leq C\, \| \alpha \|_{L^{{s'_{\max}}}_{t,x}},
&&\|p_{2}^i\|_{L^{{s_{0}}/2}_{t,x}} \leq C\, \| u \|^2_{L^{{s_{0}}}_{t,x}}\\
&\|p_3\|_{L^1_t W^{1,2}_x} \leq C\, \|f\|_{L^1_1L^2_x},  &&\|p_4\|_{L^{{s'_{\max}}}_{t,x}} \leq C\, \| \theta\,\beta\|_{L^{{s'_{\max}}}_{t,x}}
\end{align*}
Moreover, for each bounded $\Omega' \Subset \R^d \setminus \mathcal{B}^i_{r}$ we have
$$
\| p_{1}^i \|_{L^{s'_{\max}}_t(0,T; C^{k}_x(\Omega'))} \leq C\left(T,k,\Omega'\right) \| \alpha \|_{L^{{s'_{\max}}}_{t,x}},
$$
$$
\| p_{2}^i \|_{L^{s_{0}/2}_t(0,T; C^{k}_x(\Omega'))} \leq C\left(T,k,\Omega'\right) \| u \|^2_{L^{{s_{0}}}_{t,x}}.
$$
\end{lem}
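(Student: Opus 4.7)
The strategy is to represent each pressure component as a singular integral operator on $\R^d$ applied to the corresponding right-hand side, and then to read off the estimates from classical Calderón–Zygmund theory together with interior estimates for harmonic functions. Since all four equations are of divergence or double-divergence form on $\R^d$, I would use the Fourier (equivalently, Riesz-transform) representations
\[
p_1^i = R_jR_k(\alpha_{jk}\zeta_i),\quad p_2^i = -R_jR_k(u_j u_k\zeta_i),\quad p_4 = R_jR_k(\theta\,\beta_{jk}), \quad \nabla p_3 = -R(R\cdot f),
\]
with summation over repeated indices and $R_j$ the $j$-th Riesz transform. These formulas make sense as soon as the corresponding right-hand side belongs to $L^q(\R^d)$ for some $q\in(1,\infty)$, and uniqueness in each prescribed integrability class follows from Liouville's theorem applied to the difference of two solutions: it is tempered and harmonic on $\R^d$ with enough integrability to vanish identically (for $p_3$, modulo an additive constant, absorbed into the normalization).

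The global norm bounds reduce to the fact that each $R_jR_k$ is bounded on $L^q(\R^d)$ for every $q\in(1,\infty)$, with an operator norm $C(q)$ that stays uniformly bounded on compact subintervals of $(1,\infty)$. For a.e. fixed $t$ I apply this pointwise-in-time estimate at exponent $q=r_i'(t)$, obtaining
\[
\|p_1^i(t,\cdot)\|_{L^{r_i'(t)}(\R^d)}^{r_i'(t)} \leq C\,\|(\alpha\zeta_i)(t,\cdot)\|_{L^{r_i'(t)}(\R^d)}^{r_i'(t)},
\]
and integrating in $t$ controls the modular of $p_1^i$ in $L^{r_i'(t)}(\Omega_T)$ by the modular of $\alpha\zeta_i$, which (because $r_i'$ depends only on $t$) is equivalent to the desired Luxemburg norm bound. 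The same template gives the estimate for $p_2^i$ at exponent $R_i(t)/2$, after the pointwise identity $|u\otimes u\,\zeta_i|^{R_i/2} = |u\sqrt{\zeta_i}|^{R_i}$, and the bounds for $p_3$ at exponent $2$, for $p_4$ at $s_{\max}'$, and for the constant-exponent variants at $q\in\{2,s_{\max}',s_0/2\}$. Throughout, the inequality $R_i(t)/2 \geq (3d+2)/(2d) > 1$, coming from Lemma \ref{lem:decomposition_Omega} together with \ref{ass:usual_bounds_exp}, keeps the exponents away from the endpoint $1$.

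For the local regularity, I use that $\alpha\zeta_i$ and $u\otimes u\,\zeta_i$ are supported in $\mathcal{B}^i_r$, so $p_1^i$ and $p_2^i$ are harmonic on $\R^d\setminus\mathcal{B}^i_r$ for a.e.\ $t$. Iterating the mean value property yields, for each $\Omega'\Subset\R^d\setminus\mathcal{B}^i_r$, each $k\geq 0$ and each $q\in(1,\infty)$,
\[
\|p(t,\cdot)\|_{C^k(\Omega')}\leq C\bigl(k,q,\dist(\Omega',\mathcal{B}^i_r)\bigr)\,\|p(t,\cdot)\|_{L^q(\R^d)}.
\]
Taking $q=s_{\max}'$ for $p_1^i$ and $q=s_0/2$ for $p_2^i$, combining with the corresponding constant-exponent global bounds above, and then raising to the proper power and integrating in $t$ delivers the claimed $L^{s_{\max}'}_t(0,T;C^k(\Omega'))$ and $L^{s_0/2}_t(0,T;C^k(\Omega'))$ estimates. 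The only non-routine point is keeping the Calderón–Zygmund constants uniform over the full range of $t$-dependent exponents $r_i'(t)$ and $R_i(t)/2$; this is exactly what Assumption \ref{ass:usual_bounds_exp} together with Lemma \ref{lem:decomposition_Omega} are designed to guarantee.
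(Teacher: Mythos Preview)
Your proposal is correct and follows essentially the same approach as the paper. The paper packages the Calder\'on--Zygmund step into its auxiliary Theorem~\ref{thm:poissonLp} (proved in the appendix via a duality argument with the Newtonian potential rather than by naming the Riesz transforms explicitly), and for the interior $C^k$ regularity it iterates the local $W^{2,p}$ estimate from \cite[Theorem~4.2]{MR1616087} and then invokes Sobolev embedding rather than the mean value property directly---but these are presentational differences, not substantive ones.
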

\begin{proof}
For $p_{1}^i$, $p_{2}^i$ and $p_4$ this follows immiedately from Theorem \ref{thm:poissonLp} because $\alpha \, \zeta_i \in L^{r_i'(t)}_{t,x}$, $\alpha \in L^{s'_{\max}}_{t,x}$, $u \otimes u \, \zeta_i \in L^{R_i(t)/2}_{t,x}$ (with a norm controlled by $\|u \, \sqrt{\zeta_i}\|^2_{L^{R_i(t)}_{t,x}}$), $u \otimes u \in L^{s_0/2}_{t,x}$ and $\beta \in L^{s_{\max}'}_{t,x}$. To see the result for $p_3$, we fix $t \in (0,T)$ and solve the problem
$$
\int_{\mathbb{R}^d}\nabla p_3 \cdot \nabla \varphi = -\int_{\mathbb{R}^d}f\cdot \nabla \varphi \quad \textrm{ for all } \varphi\in C_c(\mathbb{R}^d)
$$
with the assumption $p_3(x)\to 0$ as $|x|\to \infty$.
Therefore, the classical theory implies there exists a uniquely defined $p_3(t,x)$ fulfilling $\|\nabla p_3(t)\|_{L_x^2} \le \|f(t)\|_{L^2_x}$. Consequently the statement of the lemma for $p_3$ follows from the embedding theorem. \\

\noindent Finally, we obtain better estimates for $p_{1}^i$ and $p_{2}^i$. We observe that these functions are harmonic in $\R^d \setminus \mathcal{B}^i_{r}$. Therefore, by Weyl's lemma, $p_{1}^i$ and $p_{2}^i$ are smooth in the spatial variable. To obtain uniform local estimate, we first consider $p_{1}^i$, we fix $\Omega' \Subset \R^d \setminus \mathcal{B}^i_{r}$ and apply \cite[Theorem 4.2]{MR1616087} to deduce
$$
\| p_{1}^i \|_{L^{s'_{\max}}_t(0,T; W^{2,s'_{\max}}_x(\Omega'))} \leq C\left(T, \Omega'\right)\, \| p_{1}^i \|_{L^{s'_{\max}}_{t,x}} \leq C\left(T, \Omega'\right) \| \alpha \|_{L^{{s'_{\max}}}_{t,x}}.
$$
As $\Delta p_{1}^i = 0$ in $\R^d \setminus \mathcal{B}^i_{r}$, we may iterate to get
$$
\| p_{1}^i \|_{L^{s'_{\max}}_t(0,T; W^{2k,s'_{\max}}_x(\Omega'))} \leq C\left(T,k,\Omega'\right) \| \alpha \|_{L^{{s'_{\max}}}_{t,x}}
$$
and this concludes the proof by the Sobolev embedding. The proof for $p_{2}^i$ is completely analogous.
\end{proof}

\begin{lem}\label{lem:pressure}
There exists uniquely determined $p_h$ satisfying \eqref{eq:distributional_id_with_ph}  and $\int_{\Omega} p_h(t,x) \diff x = 0$. Moreover, \eqref{eq:pressure_is_harmonic} and \eqref{eq:thm_estimate_on_p_Hol} hold true.
\end{lem}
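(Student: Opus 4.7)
The plan is to integrate the identity \eqref{eq:gen_identity_def} in time, apply the Ne\v{c}as theorem to convert the resulting functional (which annihilates the solenoidal subspace) into a gradient, and then separate the harmonic ``remainder'' $p_h$ from the explicit pressures $p_1^i,p_2^i,p_3,p_4$ already provided by \eqref{eq:defp1A}--\eqref{eq:defp4}.

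First, exploiting weak time continuity $u\in C_w([0,T];L^{2}_{0,\DIV}(\Omega))$ (a consequence of the equation tested against solenoidal smooth functions) and mollification in time, I reduce \eqref{eq:gen_identity_def} to the pointwise-in-time identity
\[
\int_{\Omega}(u(t)-u_0)\cdot\psi\,\diff x=\int_{0}^{t}\!\!\int_{\Omega}\bigl[u\otimes u:\nabla\psi-(\alpha+\theta\beta):D\psi+f\cdot\psi\bigr]\diff x\,\diff s,
\]
valid for a.e.\ $t\in(0,T)$ and every solenoidal $\psi\in C_c^{\infty}(\Omega)$. Viewed as a functional of $\psi$, the right hand side is continuous on $W_0^{1,s_{\max}}(\Omega;\R^{d})$ (the worst integrability is that of $\alpha+\theta\beta\in L^{s'_{\max}}$; the convective term is controlled by \eqref{eq:regularity_estimates_u}) and vanishes on the solenoidal subspace. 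Ne\v{c}as' theorem then produces a unique zero-mean $Q(t,\cdot)\in L^{s'_{\max}}(\Omega)$ with
\[
\nabla Q(t)=u(t)-u_0+\int_{0}^{t}\bigl[\DIV(u\otimes u)-\DIV(\alpha+\theta\beta)-f\bigr]\diff s\qquad\text{in }\mathcal{D}'(\Omega).
\]

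Setting $P(t,x):=\sum_{i=1}^{N}\bigl(p_1^{i}(t,x)+p_2^{i}(t,x)\bigr)+p_3(t,x)+p_4(t,x)$, I define
\[
p_h(t,x):=\int_{0}^{t}P(s,x)\,\diff s-Q(t,x)+c(t),
\]
where $c(t)$ is chosen so that $\int_{\Omega}p_h(t,x)\,\diff x=0$. A direct computation from \eqref{eq:defp1A}--\eqref{eq:defp4} gives $\Delta P=-\DIV\DIV(\alpha+\theta\beta)+\DIV\DIV(u\otimes u)-\DIV f$, while applying $\DIV$ to the identity for $\nabla Q$ produces precisely $\Delta Q(t)=\int_{0}^{t}\Delta P(s)\,\diff s$; hence $\Delta p_h\equiv 0$ in $\Omega$, which is \eqref{eq:pressure_is_harmonic}. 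The $L^{\infty}_{t}L^{s'_{\max}}_{x}$ bound in \eqref{eq:thm_estimate_on_p_Hol} follows from the Ne\v{c}as estimate on $Q$ and the time-integrated estimates on $P$ from Lemma \ref{lem:integrability_of_pi}, while interior smoothness of harmonic functions delivers the local $C^{k}(\Omega')$ bound. To verify \eqref{eq:distributional_id_with_ph} for arbitrary $\varphi\in C_c^{1}([0,T)\times\Omega)$, I substitute $\nabla p_h=\nabla\!\int_{0}^{t}P\,\diff s-\nabla Q$ into the left hand side, integrate by parts in time and in space, and use the Poisson equations \eqref{eq:defp1A}--\eqref{eq:defp4} together with the formula for $\nabla Q$; the non-solenoidal contributions telescope, and what remains is \eqref{eq:gen_identity_def} applied to the Helmholtz-solenoidal component of $\varphi$, which holds by hypothesis. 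Uniqueness is read off from the fact that the difference $q:=p_h-p_h'$ of two candidates satisfies $\iint\nabla q\cdot\partial_t\varphi\,\diff x\,\diff t=0$ for every admissible $\varphi$, so $\nabla q$ is time-independent as a distribution; combined with the harmonicity of $q$, the trace $\nabla q|_{t=0}=0$ extracted from \eqref{eq:distributional_id_with_ph}, and the zero-mean normalization, this forces $q\equiv 0$.

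The main obstacle is twofold. First, one must set up Ne\v{c}as' theorem rigorously so that $Q(t)$ belongs to $L^{s'_{\max}}_{x}$ uniformly in time, even though some of the source terms naturally live in the anisotropic variable-exponent spaces dictated by $s(t,x)$ rather than in a single global Lebesgue space; the integrated form of the equation is precisely what permits the use of a time-independent Lebesgue exponent. Second, the sign bookkeeping showing $\Delta p_h=0$ must mesh exactly with the conventions of \eqref{eq:defp1A}--\eqref{eq:defp4}, and the verification of \eqref{eq:distributional_id_with_ph} for non-solenoidal $\varphi$ must avoid integration by parts in time on objects that are only distributions in $t$.
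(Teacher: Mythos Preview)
Your approach is correct and reaches the same conclusion, but it differs from the paper's route in one structural respect. The paper builds the functional $\mathcal{G}(\varphi)$ \emph{including} the pressure terms $\int_0^t\!\int_\Omega(p_1^i+p_2^i+p_3+p_4)\,\DIV\varphi$ from the outset, applies Ne\v{c}as' theorem once to obtain $p_h(t,\cdot)$ directly, and then checks harmonicity by substituting $\varphi=\nabla\phi$ (the Poisson equations \eqref{eq:defp1A}--\eqref{eq:defp4} make the $p_j$-terms cancel against $\alpha,\beta,u\otimes u,f$, leaving $\int_\Omega p_h\,\Delta\phi=0$). You instead apply Ne\v{c}as to the raw functional to produce $Q(t)$, set $p_h:=\int_0^t P\,\diff s-Q+c(t)$, and verify $\Delta p_h=0$ by matching $\Delta Q(t)$ with $\int_0^t\Delta P\,\diff s$.

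Both methods use the same estimates (Lemma~\ref{lem:integrability_of_pi} for the $p_j$'s, the $L^{s'_{\max}}$ control on $\alpha+\theta\beta$ and the $L^{s_0/2}$ control on $u\otimes u$), so neither is more elementary. The paper's version is slightly shorter because harmonicity comes in one line and there is no need to check separately that $\int_0^t P\,\diff s\in L^{\infty}_t L^{s'_{\max}}_x(\Omega)$. Your version has the merit of making the decomposition $Q=\int_0^t P-p_h$ explicit, which clarifies why the harmonic remainder appears. Your uniqueness argument is more elaborate than needed: once the pointwise-in-time identity is in hand, uniqueness of $p_h(t,\cdot)$ with zero mean is immediate from the uniqueness clause in Ne\v{c}as' theorem, and the paper simply invokes that.
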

\begin{proof}
We divide the proof for several steps.\\

\noindent \underline{Existence of $p_h$.} For fixed time $t>0$, we consider functional
\begin{multline*}
\mathcal{G}(\varphi):= \int_{\Omega} (u(t,x)- u_0(x)) \cdot \varphi(x) + \int_0^t \int_{\Omega} (- u \otimes u + \alpha + \theta \beta) : \nabla \varphi - \int_0^t \int_{\Omega} f \cdot \varphi \,+\\
+ \sum_{i=1}^N\int_0^t \int_{\Omega} (p_1^i + p_2^i) \, \DIV \varphi+  \int_0^t \int_{\Omega} (p_3 + p_4) \, \DIV \varphi
\end{multline*}
acting on vector-valued functions $\varphi: \Omega \to \R^d$. Now, we establish regularity of functional $\mathcal{G}$ by estimating terms appearing in its definition. First, thanks to \eqref{eq:regularity_estimates_u}--\eqref{eq:regularity_estimates_not_u_2} we have
$$
\left|\int_{\Omega} (u(t,x)- u_0(x)) \cdot \varphi(x)\right| \leq \left( \|u\|_{L^{\infty}_t L^2_x} + \|u_0\|_{L^{2}_x} \right) \| \varphi \|_{L^{2}_x},
$$
\begin{multline*}
\left|\int_0^t \int_{\Omega} (- u \otimes u + \alpha + \theta \beta) : \nabla \varphi\right| \leq \\
\leq C(T)\,\left(
\|u\|_{L^{s_{0}}_{t,x}}^2 \| \nabla \varphi\|_{L^{(s_{0}/2)'}_x} +
\left(\| \alpha\|_{L^{s'_{\max}}_{t,x}} + \| \theta \beta\|_{L^{s'_{\max}}_{t,x}}  \right)\, \| \nabla \varphi \|_{L^{s_{\max}}_x} \right).
\end{multline*}
Here, the constant $C(T)$ comes from applying H\"{o}lder's inequality in the time variable. Next,
$$
\left| \int_0^t \int_{\Omega} f\cdot \varphi \right| \leq C(T) \, \|f\|_{L^{1}_tL^2_x} \, \|\varphi\|_{L^{2}_{x}}.
$$
For the terms with $p_1^i$, $p_2^i$, $p_3$, $p_4$ we apply Lemma \ref{lem:integrability_of_pi} to obtain
$$
\begin{aligned}
\left|\int_0^t \int_{\Omega} p_1^i \,\DIV \varphi \right| &\leq  C(T, \|\alpha  \|_{L^{s'_{\max}}_{t,x}}) \, \|\nabla \varphi\|_{L^{s_{\max}}_x},\\
\left|\int_0^t \int_{\Omega} p_2^i \, \DIV \varphi \right| &\leq C(T, \|u\|_{L^{s_0}_{t,x}}^2)\, \| \nabla \varphi \|_{L^{\left(s_{0}/2\right)'}_x},\\
\left|\int_0^t \int_{\Omega} p_3 \, \DIV \varphi \right| &\leq C(T, \|f\|_{L^{1}_tL^2_x})\, \| \nabla \varphi \|_{L^{2}_x},\\
\left|\int_0^t \int_{\Omega} p_4 \, \DIV \varphi \right| &\leq C(T, \|\theta \, \beta\|_{L^{s'_{\max}}_{t,x}})\, \| \nabla \varphi \|_{L^{s_{\max}}_x}.
\end{aligned}
$$
We conclude that $\mathcal{G}$ is a bounded functional on $W^{1,s_{\max}}_0(\Omega)$ because $(s_0/2)' = \frac{3d+2}{d+2} \leq s_{\max}$. Moreover, if $\DIV \varphi = 0$, we obtain from \eqref{eq:gen_identity_def} that $\mathcal{G}(\varphi) = 0$. Hence, applying the Ne\v{c}as theorem, we know that there exists a distribution $p_h(t)$ fulfilling $\mathcal{G}(\varphi)=-\langle \nabla p_h(t), \varphi\rangle$ in sense of distributions. Using then the above estimates and the Ne\v{c}as theorem about negative norms, cf. Lemma~\ref{lem:deRham}, we get that for a.e. $t \in (0,T)$ there exists uniquely (up to function depending only on time) defined function $p_h(t,x)$ fulfilling
\begin{equation}\label{tlak1}
\begin{split}
\int_{\Omega} &(u(t,x)- u_0(x)) \, \varphi(x) + \int_0^t \int_{\Omega} (- u \otimes u + \alpha + \theta \beta) \cdot \nabla \varphi +\int_0^t \int_{\Omega} f\, \cdot \varphi \, +  \\ &+ \sum_{i=1}^N \int_0^t \int_{\Omega} (p_1^i + p_2^i) \cdot \DIV \varphi + \int_0^t \int_{\Omega} (p_3 + p_4) \cdot \DIV \varphi - \int_{\Omega} p_h(t,x) \, \DIV \varphi = 0.
\end{split}
\end{equation}
Moreover, if we require that $\int_{\Omega}p_h(t,x)=0$ for $t \in (0,T)$, then  $\|p_h(t,\cdot)\|_{L^{s'_{\max}}_x} \leq \widetilde{C}$, where $\widetilde{C}$ is a constant
$$
\widetilde{C}\left(T,
\|Du\|_{L^{s(t,x)}_{t,x}}, \|\alpha\|_{L^{s(t,x)}_{t,x}},
\|\theta \, \beta\|_{L^{s'_{\max}}_{t,x}},
\|f\|_{L^{1}_tL^2_x},
 \|u\|_{L^{\infty}_t L^2_x}, \|u_0\|_{L^{2}_x}\right).
$$
Here, we used the fact that
$\| u\|_{L^{s_0}_{t,x}}\le C(\|Du\|_{L^{s(t,x)}_{t,x}},\|u\|_{L^{\infty}_t L^2_x})$. Furthermore, we included $\|\alpha\|_{L^{s'_{\max}}_{t,x}}$ into $\|\alpha\|_{L^{s(t,x)}_{t,x}}$. Taking supremum over all $t \in (0,T)$, we deduce that $\|p_h\|_{L^{\infty}_t L^{s'_{\max}}_x} \leq \widetilde{C}$. \\

\noindent \underline{Function $p_h$ is harmonic.} Let $\varphi = \nabla \phi$ where $\phi \in C_c^{\infty}(\Omega)$. From \eqref{eq:defp1A}--\eqref{eq:defp4} we deduce
\begin{align*}
&\sum_{i=1}^N \int_0^t \int_{\Omega} (p_1^i + p_2^i) \, \DIV \varphi + \int_0^t \int_{\Omega} (p_3 + p_4) \, \DIV \varphi= \\
&\quad = \sum_{i=1}^N \int_0^t \int_{\Omega} (p_1^i + p_2^i) \, \Delta \phi + \int_0^t \int_{\Omega} (p_3 + p_4) \, \Delta \phi
=  \int_0^t \int_{\Omega} \left(-\alpha - \theta \beta +  u\otimes u \right) : \nabla^2 \phi +  f \cdot \nabla \phi
\end{align*}
because $\sum_{i=1}^N \zeta_i = 1$ on $\Omega$. Furthermore, the incompressibility condition yields
$$
\int_{\Omega} (u(t,x)- u_0(x)) \, \varphi(x) \diff x = 0.
$$
Therefore,
$$
\int_{\Omega} p_h(t,x) \, \Delta \phi(x) \diff x = 0
$$
so that $\Delta p_h(t,x) = 0$ in $\Omega$  in the sense of distributions.\\

\noindent \underline{Estimate on $p_h$.} By Weyl's lemma, $p_h(t,x)$ is smooth in the spatial variable. To obtain uniform local estimate, we fix $\Omega' \Subset \Omega$ and apply \cite[Theorem 4.2]{MR1616087} to equation  to deduce
$$
\| p_h \|_{L^{\infty}_t(0,T; W^{2,s'_{\max}}_x(\Omega'))} \leq C\, \| p_h \|_{L^{\infty}_t(0,T; L^{s'_{\max}}_x(\Omega))} \leq C(\widetilde{C}, \Omega').
$$
As the (RHS) of equation $\Delta p_h = 0$ is zero, we may iterate to prove
$$
\| p_h \|_{L^{\infty}_t(0,T; W^{2k,s'_{\max}}_x(\Omega'))} \leq C(\widetilde{C}, \Omega', k).
$$
The conclusion follows by Sobolev embeddings.

\noindent \underline{Weak formulation for $p_h$.}
Multiplying \eqref{tlak1} by an arbitrary $\partial_t \tau$, where $\tau \in C^1_c[0,T)$, integrating over $t\in (0,T)$ and using integration by parts with respect to time and space, using also the fact that $p_h$ is smooth in the interior of $\Omega$, we deduce with the help of \eqref{tlak1} that for all smooth $\psi \in C_c(\Omega)$
\begin{equation*}
\begin{split}
\int_{\Omega_T} &\partial_t\tau\,(u+\nabla p_h)\cdot \varphi =\int_{\Omega_T} \partial_t\tau\,u\cdot \varphi -\int_{\Omega_T}\partial_t\tau\,p_h \DIV\varphi \\
&=\int_{\Omega_T} \partial_t \tau\,u_0\cdot \varphi  -\int_0^T \left(\partial_t \tau \int_0^t \int_{\Omega} (- u \otimes u + \alpha + \theta \beta) : \nabla \varphi +\int_0^t \int_{\Omega} f\, \cdot \varphi \right) \diff t \, +  \\
&- \sum_{i=1}^N \int_0^T \partial_t \tau \left(\int_0^t \int_{\Omega} (p_1^i + p_2^i) \,\DIV \varphi + \int_0^t \int_{\Omega} (p_3 + p_4) \, \DIV \varphi\right)\diff  t\\
&=-\int_{\Omega} u_0(x)\cdot \varphi(x) \tau(0) \diff x +\int_{\Omega_T} \tau\,(- u \otimes u + \alpha + \theta \beta) : \nabla \varphi + \tau\,f\, \cdot \varphi   \, +  \\
&+\sum_{i=1}^N \int_{\Omega_T}\tau\,(p_1^i + p_2^i) \,\DIV \varphi\,  +  \tau\,(p_3 + p_4) \, \DIV \varphi 
\end{split}
\end{equation*}
The relation \eqref{eq:distributional_id_with_ph} then follows directly.
\end{proof}

\noindent To prove Theorem \ref{thm:local_energy_equality}, we need to extend the equation for negative times in a way that keeps in mind the initial condition $u_0$. We also include here a simple yet important regularity assertion: a solution mollified in spatial variable gains Sobolev regularity in time.

\begin{lem}\label{lem:extension_by_0_u}
Let $u$ be as in \eqref{eq:distributional_id_with_ph} and let us extend $u$ to $\overline{u}$ by the following:
\begin{align*}
    \overline{u}(t,x) = \begin{cases}
    0 &\text{ when }t > T,\\
    u(t, x) &\text{ when }t\in (0, T],\\
    u_0(x) &\text{ when }t \leq 0.
    \end{cases}
\end{align*}
In addition, let $\overline{f}$, $\overline{p^i_1}$, $\overline{p^i_2}$, $\overline{p_3}$, $\overline{p_4}$, $\overline{p_h}$, $\overline{u\otimes u}$, $\overline{\alpha}$, $\theta \overline{\beta}$ denote the extension by $0$ outside of the time interval $(0,T)$ for the quantities  $f$, $p^i_1$, $p^i_2$, $p_3$, $p_4$, $p_h$, $u\otimes u$, $\alpha$, $\theta \beta$ respectively. Then
\begin{equation}\label{eq:weak_equality_extensions}
\begin{split}
    \int_{-T}^T&\int_{\Omega}-(\overline{u} + \nabla \overline{p_h}) \cdot \p_t \phi - \overline{u\otimes u}:\nabla\phi + (\overline{\alpha} + \theta \overline{\beta}): \nabla \phi \diff x \diff t = \\
    &= -\sum_{i = 1}^N \int_{-T}^T\int_{\Omega}(\overline{p_1^i}+\overline{p_2^i})\,\DIV{\phi}\diff x \diff t - \int_{-T}^T\int_{\Omega}(\overline{p_3}+\overline{p_4})\,\DIV{\phi}\diff x \diff t+ \int_{-T}^T\int_{\Omega}\overline{f} \cdot \phi\diff x \diff t
\end{split}
\end{equation}
holds for any $\phi\in C^\infty_c((-T, T) \times \Omega)$. Moreover,
\begin{equation}
\label{tdp}
\p_t(\overline{u}+\nabla\overline{p_h})\in L^1(-T, T; (W^{1,s_{\max}}_0(\Omega))^*).
\end{equation}
\end{lem}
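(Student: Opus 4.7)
The plan is to split $(-T, T)$ into $(-T, 0)$ and $(0, T)$, reducing \eqref{eq:weak_equality_extensions} on the second piece to \eqref{eq:distributional_id_with_ph} and computing the first piece by hand. Fix $\phi \in C^\infty_c((-T, T) \times \Omega)$. Its restriction to $[0, T) \times \Omega$ lies in $C^1_c([0, T) \times \Omega)$, since the compact support of $\phi$ forces $\phi$ to vanish near $t = T$ while $\phi(0, \cdot)$ may be arbitrary. Applying \eqref{eq:distributional_id_with_ph} to this restriction reproduces exactly the contribution to \eqref{eq:weak_equality_extensions} coming from $(0, T)$, modulo the boundary term $-\int_\Omega u_0(x) \cdot \phi(0, x) \diff x$. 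On $(-T, 0)$ every overlined quantity vanishes except $\overline u \equiv u_0$, so the left-hand side of \eqref{eq:weak_equality_extensions} restricted to this interval reduces to $-\int_{-T}^0 \int_\Omega u_0(x) \cdot \partial_t \phi \diff x \diff t$. Integrating by parts in $t$ and using $\phi(-T, \cdot) = 0$, this equals precisely $-\int_\Omega u_0(x) \cdot \phi(0, x) \diff x$. The two boundary contributions at $t = 0$ cancel upon summation, yielding \eqref{eq:weak_equality_extensions}.

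For \eqref{tdp}, I would read off \eqref{eq:weak_equality_extensions} the identity
$$
\partial_t(\overline u + \nabla \overline{p_h}) = \DIV\bigl(\overline \alpha + \theta \overline \beta - \overline{u \otimes u}\bigr) + \nabla\left(\sum_{i=1}^N (\overline{p_1^i} + \overline{p_2^i}) + \overline{p_3} + \overline{p_4}\right) + \overline f
$$
in the sense of distributions on $(-T, T) \times \Omega$, and estimate each right-hand term as a functional in $L^1(-T, T; (W^{1, s_{\max}}_0(\Omega))^*)$ by H\"older's inequality. The needed ingredients are the embedding $L^{s'(t, x)}(\Omega_T) \hookrightarrow L^{s'_{\max}}(\Omega_T)$ valid on the bounded cylinder $\Omega_T$ (so $\alpha \in L^{s'_{\max}}_{t,x}$), together with the assumption $\theta \beta \in L^{s'_{\max}}_{t,x}$; the integrability of $p_1^i$, $p_2^i$, $p_3$, $p_4$ furnished by Lemma~\ref{lem:integrability_of_pi}; the embedding $W^{1, s_{\max}}_0 \hookrightarrow L^2$ paired with $f \in L^1_t L^2_x$; and $u \otimes u \in L^{s_0/2}_{t,x}$ paired with $W^{1, s_{\max}}_0 \hookrightarrow L^{(s_0/2)'}$.

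The only delicate point is the convective term, which enforces $(s_0/2)' = (3d+2)/(d+2) \leq s_{\max}$. This inequality is guaranteed by Assumption~\ref{ass:usual_bounds_exp}, since $s_{\min} \geq (3d+2)/(d+2)$, and it is precisely the numerical constraint already exploited in the proof of Lemma~\ref{lem:pressure}. Apart from this observation, the argument is essentially bookkeeping: no new analytical difficulty arises beyond a careful accounting of the bounds previously established for $p_h$ and the $p_i$'s.
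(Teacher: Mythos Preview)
Your proposal is correct and follows essentially the same approach as the paper: split $(-T,T)$ into $(-T,0)$ and $(0,T)$, use \eqref{eq:distributional_id_with_ph} on the latter, compute the $u_0$ contribution on the former by integration by parts, and observe the boundary terms at $t=0$ cancel. For \eqref{tdp} the paper simply says it ``follows directly from \eqref{eq:weak_equality_extensions} and the assumptions on data''; your more detailed term-by-term accounting is a faithful expansion of that remark (one small imprecision: for the convective term what you actually need is $\nabla\varphi\in L^{s_{\max}}\hookrightarrow L^{(s_0/2)'}$, i.e.\ $(s_0/2)'\le s_{\max}$, rather than an embedding of $W^{1,s_{\max}}_0$ itself into $L^{(s_0/2)'}$).
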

\begin{proof}
From \eqref{eq:distributional_id_with_ph} we obtain (using also the zero extensions)
\begin{align*}
    &\int_{-T}^T\int_{\Omega}-(\overline{u} + \nabla\overline{p_h})\,\p_t \phi - \overline{u\otimes u}:\nabla\phi + (\overline{\alpha} + \theta\, \overline{\beta}):\nabla\phi \diff x \diff t =\\
    & \qquad \qquad = \int_{-T}^0\int_\Omega-u_0\,\p_t \phi\diff x\diff t + \int_0^T\int_{\Omega}-(u + \nabla p_h)\,\p_t \phi - u\otimes u : \nabla\phi + (\alpha + \theta \beta): D\phi \diff x\diff t\\
    & \qquad \qquad= -\int_{\Omega}u_0(x)\,\phi(0,x)\diff x + \int_0^T\int_{\Omega}-(u + \nabla p_h)\,\p_t \phi - u\otimes u : \nabla\phi + (\alpha + \theta \beta): D\phi \diff x\diff t\\
    & \qquad \qquad= -\sum_{i=1}^N \int_{\Omega_T} (p_1^i + p_2^i)\,\DIV\phi\diff x\diff t - \int_{\Omega_T} (p_3 + p_4)\,\DIV\phi\diff x\diff t  + \int_{\Omega_T}f\,\phi\diff x\diff t\\
    & \qquad \qquad= -\sum_{i=1}^N \int_{\Omega_T} (\overline{p_1^i} + \overline{p_2^i})\,\DIV\phi\diff x\diff t - \int_{\Omega_T} (\overline{p_3} + \overline{p_4})\,\DIV\phi\diff x\diff t  + \int_{-T}^T\int_{\Omega}\overline{f}\,\phi \diff x\diff t,
\end{align*}
which completes the proof of \eqref{eq:weak_equality_extensions}. The assertion \eqref{tdp} follows directly from  \eqref{eq:weak_equality_extensions} and the assumptions on data.
\end{proof}

\begin{proof}[Proof of Theorem \ref{thm:local_energy_equality}]
Most of the statements of Theorem~\ref{thm:local_energy_equality} have been proven in Lemma~\ref{lem:integrability_of_pi}--\ref{lem:extension_by_0_u}. The only missing, but essential point, is the energy equality  \eqref{eq:local_energy_equality}. Hence, we focus on it in what follows.
We use the following approximation of the indicator function $\mathbf{1_{(\eta, \varrho)}}$:
\begin{align}\label{eq:def_fun_gamma}
\gamma_{\eta, \varrho}^{\tau}(t) = \left\{\begin{array}{ll}
     0\text{ for } t\leq \eta - \tau \text{ or } t\geq \varrho + \tau \\
     1 \text{ for } \eta \leq t \leq \varrho\\
     \text{affine  for }t\in [\eta - \tau, \eta]\cup[\varrho, \varrho + \tau]
\end{array}
\right.
\end{align}
Here, $\eta$ might be both negative and positive. When $\eta = 0$, we will mean the function
\begin{align*}
\gamma_{0, \varrho}^{\tau}(t) = \left\{\begin{array}{ll}
     0\text{ on } [\varrho + \tau, 1] \\
     1 \text{ on } [0, \varrho]\\
     \text{affine  otherwise}
\end{array}
\right.
\end{align*}
as the needed approximations.\\

\noindent For $\varrho, \eta \in (0,T)$ and $\delta, \tau, \varepsilon$ nonnegative and sufficiently small we define
$$
\phi_{\eta, \varrho}^{\delta, \tau, \eps}(t, x) = (\mathcal{R}^\delta(u^\eps(t, x)+\nabla p_h^{\eps}(t,x))\,\psi(x)\,\gamma_{-\eta, \varrho}^\tau(t)))^\eps
$$
and we use it as a test function in \eqref{eq:weak_equality_extensions}. We recall here that mollification operator $\mathcal{R}^{\delta}$ is defined in Definition \ref{res:mol_in_ti}. We obtain five different parts and we will study their limits as $\tau\to 0$, $\delta\to 0$ and $\varepsilon \to 0$ separately.\\

\noindent \underline{Term $(\overline{u} + \nabla \overline{p_h})\cdot \p_t \phi$.}
First, thanks to \eqref{tdp}, we can deduce for the distributional derivative that $\p_t(\overline{u}+\nabla\overline{p_h})\in L^{s'_{\max}}(0,T; (W_0^{1,s_{\max}}(\Omega))^*)$, consequently, mollification with respect to the spatial variable leads to the fact
$$
\p_t(\overline{u}+\nabla\overline{p_h})^\eps\in L^1(-T, T; L^1(\Omega')) \qquad \textrm{for any }\Omega' \Subset \Omega.
$$
Therefore, we can use integration by parts and Fubini theorem applied to mollifier to deduce
\begin{align*}
-&\int_{-T}^T \int_{\Omega}(\overline{u} + \nabla \overline{p_h})\cdot  \p_t \phi \diff x \diff t\\
&=-\int_{-T}^T \int_{\Omega}(\overline{u} + \nabla \overline{p_h})\cdot \p_t (\mathcal{R}^\delta(u^\eps(t, x)+\nabla p_h^{\eps}(t,x))\,\psi(x)\,\gamma_{-\eta, \varrho}^\tau(t)))^\eps \diff x \diff t\\
&=-\int_{-T}^T \int_{\Omega}(\overline{u} + \nabla \overline{p_h})^{\eps}\cdot \p_t (\mathcal{R}^\delta(u^\eps(t, x)+\nabla p_h^{\eps}(t,x))\,\psi(x)\,\gamma_{-\eta, \varrho}^\tau(t))) \diff x \diff t\\
&=\int_{-T}^T\int_{\Omega} \partial_t(\overline{u} + \nabla \overline{p_h})^{\varepsilon}(t,x) \cdot \mathcal{R}^\delta(\overline{u}^\eps(t, x)+ \nabla \overline{p_h}^{\eps})\,\psi(x)\,\gamma_{-\eta, \varrho}^\tau(t)) \diff x \diff t.
\end{align*}
We observe that $u\in L^\infty_t L^2_x$ and $p_h\in L^\infty_t L^\infty_{x, loc}$ so $(\overline{u} + \nabla \overline{p_h})^\eps \in L^\infty_t L^\infty_{x, loc}$. Moreover, $\partial_t(\overline{u} + \nabla \overline{p_h})^{\varepsilon}(t,x) \in L^{1}_t L^1_x$ and so  we can apply Lebesgue's dominated convergence to converge with $\tau\to 0$, $\delta\to 0$ and obtain
\begin{align*}
\to \int_{-\eta}^\varrho\int_{\Omega} \p_t (\overline{u} + \nabla \overline{p_h})^\eps(t,x) \cdot  (\overline{u} + \nabla \overline{p_h})^\eps(t,x)\,\psi(x)\diff x \diff t.
\end{align*}
Now since $(\overline{u} + \nabla \overline{p_h})^\eps \in L^\infty_t L^\infty_{x, loc}$, we may apply chain rule for Sobolev functions similarly as in \cite[Lemma 2.14]{bulicek2021parabolic} to obtain
\begin{align*}
    \int_{-\eta}^\varrho\int_{\Omega}(\overline{u} + \nabla \overline{p_h})^\eps(t,x)\,\p_t (\overline{u} + \nabla \overline{p_h})^\eps(t,x)\,\psi(x)\diff x \diff t = \frac{1}{2}\int_{-\eta}^\varrho\int_{\Omega}\p_t|(\overline{u} + \nabla \overline{p_h})^\eps(t,x)|^2\,\psi(x)\diff x \diff t.
\end{align*}
By the absolute continuity of Sobolev functions on lines, we get that for all $\eta, \varrho \in (0, T)$
\begin{equation*}
\begin{split}
    &\int_{-\eta}^\varrho\int_{\Omega}\p_t|(\overline{u} + \nabla \overline{p_h})^\eps(t,x)|^2\,\psi(x)\diff x \diff t \\
    &= \int_{\Omega} |(\overline{u} + \nabla \overline{p_h})^\eps(\varrho, x)|^2 \, \psi(x) \diff x - \int_{\Omega}|(\overline{u} + \nabla \overline{p_h})^\eps(-\eta, x)|^2\,\psi(x) \diff x\\
    &= \int_{\Omega} |(u + \nabla p_h)^\eps(\varrho, x)|^2 \, \psi(x) \diff x - \int_{\Omega}|u_0^\eps(x)|^2\,\psi(x) \diff x.
\end{split}
\end{equation*}
Recall, we used the fact that $\overline{p_h}(t,x)=0$ and $u(t,x)=u_0(x)$ for negative $t$'s.
Now, we want to let $\varepsilon \to 0$. Note that for a.e. $\varrho \in (0,T)$ we have $u(\varrho,x) + \nabla p_h(\varrho, x) \in L^2_{x, loc}$ and $u_0 \in L^2_x$. Thanks to Lemma \ref{lem:conv_mollifier_with_weight}, this implies
$$
\begin{aligned}
\int_{\Omega} |(u + \nabla p_h)^\eps(\varrho, x)|^2 \, \psi(x) \diff x - \int_{\Omega}|u_0^\eps(x)|^2\,\psi(x) \diff x \to \\
\to\int_{\Omega} |(u + \nabla p_h)(\varrho, x)|^2 \, \psi(x) \diff x - \int_{\Omega}|u_0(x)|^2\,\psi(x) \diff x.
\end{aligned}
$$

\noindent \underline{Terms $(\alpha + \theta \beta): D \phi$.} As $(\overline{u} + \nabla \overline{p_h})^{\varepsilon} \in L^{\infty}_t L^{\infty}_{x, loc}$ and time derivatives are not involved, convergence results with $\delta \to 0$ and $\tau \to 0$ are trivial. Therefore, we focus on convergence $\varepsilon \to 0$. We first write
\begin{multline*}
    \int_{-\eta}^\varrho\int_{\Omega}(\overline{\alpha} + \theta \overline{\beta}) (t, x) : D(\psi(x)(\overline{u} + \nabla \overline{p_h})^\eps(t,x))^\eps\diff x \diff t = \\ = \int_0^\varrho \int_{\Omega}(\alpha + \theta \beta)(t, x) : D(\psi(x)(u + \nabla p_h)^\eps(t, x))^\eps\diff x \diff t.
\end{multline*}
When $\theta>0$, we can simply use that $\alpha + \theta \beta \in L^{s'_{\max}}_{t,x}$ and $Du, u \in L^{s_{\max}}_{t,x}$ (the latter by K\"{o}rn's inequality) to obtain
\begin{multline*}
 \int_0^\varrho \int_{\Omega}(\alpha + \theta \beta)(t, x) : D(\psi(x)(u + \nabla p_h)^\eps(t, x))^\eps\diff x \diff t \to \\
 \to  \int_0^\varrho \int_{\Omega}(\alpha + \theta \beta)(t, x) : D(\psi(x)(u + \nabla p_h)(t, x))\diff x \diff t.
\end{multline*}
For the case $\theta = 0$ we apply Theorem \ref{thm:approx_theorem} to obtain
$$
D(\psi(x)(u + \nabla p_h)^\eps(t,x))^\eps \to D(\psi(x) (u(t,x) + \nabla p_h) \text{ modularly in } L^{s(t,x)}(\Omega_T).
$$
Then, since $\alpha\in L^{s'(t,x)}$, we may apply Theorem~\ref{thm:modular_l1} and conclude
\begin{align*}
    \int_{0}^\varrho\int_{\Omega}\alpha(t, x) : D(\psi(x)(u+\nabla p_h)^\eps(t,x))^\eps\diff x \diff t \to \int_{0}^\varrho\int_{\Omega}\alpha(t, x) : D(\psi(x)(u(t,x) + \nabla p_h))\diff x \diff t.
\end{align*}
\underline{Term $\overline{f}\, \phi$.} We have
\begin{align*}
    \int_{-\eta}^\varrho \int_{\Omega}\overline{f}\cdot ((\overline{u} + \nabla \overline{p_h})^\eps \psi)^\eps\,\diff x \diff t = \int_{0}^\varrho \int_{\Omega}f^\eps \cdot ((u + \nabla p_h)^\eps \psi)\,\diff x \diff t.
\end{align*}
Next, thanks to the assumption on $f$, we know that
$$
f^{\eps} \to f \textrm{ strongly in } L^1_tL^2_x.
$$
On the other hand, we also have the weak$^*$ convergence result
$$
(u + \nabla p_h)^\eps \psi \rightharpoonup^* (u + \nabla p_h)\psi \textrm{ weakly$^*$ in } L^{\infty}_t L^2_{\textrm{loc}}.
$$
%
Hence, we get
\begin{align*}
    \int_{0}^\varrho \int_{\Omega}f \cdot ((u + \nabla p_h)^\eps \psi)^\eps\,\diff x \diff t \to \int_{0}^\varrho \int_{\Omega}f \cdot(u + \nabla p_h)\,\psi\, \diff x \diff t.
\end{align*}

\noindent \underline{Terms $\overline{p_j^i} \, \DIV \phi$ for $i = 1,...,N$ and $j = 1,2$.} Due to incompressibility of $u$ and thanks to the fact that $p_h$ is harmonic, we can write
\begin{align*}
\int_{-\eta}^\varrho \int_\Omega \overline{p_j^i}(t,x)\,\DIV ((\overline{u} + \nabla \overline{p_h})^\eps \psi)^\eps\diff x \diff t &= \int_{0}^\varrho \int_{
\Omega} (p^i_j)^\eps (u + \nabla p_h)^\eps \cdot \nabla\psi\diff x \diff t.
\end{align*}
Next, we decompose the integration domain onto $\mathcal{B}^i_{3r/2}$ and $\Omega \setminus \mathcal{B}^i_{3r/2}$. When $\varepsilon < \frac{r}{4}$, we can write
\begin{multline*}
\int_{0}^\varrho \int_{
\Omega} (p^i_j)^\eps (u + \nabla p_h)^\eps \nabla\psi\diff x \diff t =
\int_{0}^\varrho \int_{
\Omega \cap \mathcal{B}^i_{3r/2}} (p^i_j)^\eps (u\,\mathds{1}_{\mathcal{B}^i_{2r}} + \nabla p_h)^\eps \nabla\psi\diff x \diff t + \\
+ \int_{0}^\varrho \int_{
\Omega \setminus \mathcal{B}^i_{3r/2}} (p^i_j\, \mathds{1}_{\Omega \setminus \mathcal{B}^i_{5r/4}})^\eps (u + \nabla p_h)^\eps \nabla\psi\diff x \diff t.
\end{multline*}
We discuss now the cases $j=1,2$ separately. For $j = 1$, we have $p_1^i \in L^{r_i'(t)}_{t,x}$ from Lemma \ref{lem:integrability_of_pi} and $u \in L^{r_i(t)}((0,T)\times \mathcal{B}^i_{2r})$ from \eqref{eq:regularity_estimates_u} (recall that $r_i\le R_i$), so we can pass to the limit in the first term. For the second one, we use harmonic regularity of $p^i_j$ outside $\mathcal{B}^i_{r}$. Namely, we have $u \in L^{\infty}_t L^2_x$ and $\nabla p_h \in L^{\infty}_{t} L^{\infty}_{x, loc}$ so that $(u + \nabla p_h) \in L^{\infty}(0,T;L^2(\supp\, \psi))$. As $p_1^i \in L^{s'_{\max}}(0,T;L^2(\Omega \setminus \mathcal{B}^i_{5r/4}))$, cf. Lemma \ref{lem:integrability_of_pi}, the convergence is clear.\\

\noindent For $j = 2$ the proof is similar. More precisely, for the first term we observe that since $p_2^i \in L^{R_i(t)/2}_{t,x}$, it is sufficient that $u \in L^{(R_i(t)/2)'}((0,T)\times \mathcal{B}^i_{3r/2})$. This is the case because
    $$
    (R_i(t)/2)' \leq R_i(t) \iff R_i(t) \geq 3.
    $$
Recalling the definition of $R_i$ in Lemma~\ref{lem:decomposition_Omega}, using the fact that $q_i(t)\ge s_{\min}\ge \frac{3d+2}{d+2}$, we see that
$$
R_i(t)\ge s_{\min} \left(1+\frac{2}{d}\right) \ge \frac{3d+2}{d+2} \left(\frac{d+2}{d}\right)=s_0=3+\frac{2}{d}>3.
$$
The second term is controlled in exactly the same way as for $j = 1$.

\noindent \underline{Terms $\overline{p_j} \, \DIV \phi$ for $j = 3,4$.} Similarly as above, it is sufficient to study $\int_{0}^\varrho \int_{
\Omega} p_j^\eps \cdot (u + \nabla p_h)^\eps \nabla\psi\diff x \diff t$. For $j=3$ the proof is obvious because $p_3 \in L^{1}_tL^2_x$ so it is sufficient that $(u +\nabla p_h) \in L^{\infty}_tL^2_{\textrm{loc}}$ which is of course the case. For $j=4$, we observe that $\theta > 0$ so that we can use additional regularity from \eqref{eq:regularity_estimates_not_u_2} which yields $u \in L^{s_{\max}}_{t,x}$ by K\"{o}rn's and Poincar\'{e}'s inequalities. As $p_4 \in L^{s'_{\max}}_{t,x}$, the conclusion is clear.

\noindent \underline{Term  $\overline{u\otimes u} : \nabla \phi$.} We write as in the previous step
\begin{align*}
    \int_{-\eta}^\varrho \int_\Omega &\overline{u\otimes u} : \nabla (\psi (\overline{u} + \nabla \overline{p_h})^\eps)^\eps\diff x \diff t\\
    &= \int_0^\varrho\int_{\Omega} (u\otimes u)^\varepsilon : (\nabla\psi \otimes  (u + \nabla p_h)^\varepsilon)\diff x\diff t + \int_0^\varrho\int_\Omega (u\otimes u)^\varepsilon : (\psi\,(\nabla (u + \nabla p_h))^\varepsilon)\diff x\diff t.
\end{align*}
First, $u \otimes u \in L^{s_0/2}$ and $\nabla u$ and  $u$ belong to $L^{s_{\text{min}}}_{t,x}$. As $\nabla p_h \in L^{\infty}_t L^{\infty}_{x,loc}$, it is sufficient that $(s_0/2)' \leq s_{\text{min}}$. In fact, we have $s_{\text{min}} = (s_0/2)' $. Indeed, $s_0 = s_{\text{min}}\, \left(1+\frac{2}{d}\right)$ so that
$$
(s_0/2)' = \frac{s_0}{s_0 - 2} = \frac{s_{\text{min}} (d+2)}
{s_{\text{min}} (d+2) - 2d }
= \frac{3d+2}{3d+2 - 2d} = s_{\text{min}}
$$
and this concludes the proof.
\end{proof}

\section{The approximating problem}\label{section:5}
\noindent The crucial step of the existence  proof is the approximation of the stress tensor $S$. Namely, we set
\begin{align}\label{eq:def_of_reg_operator}
    S^\theta(t, x, \xi) := S(t, x, \xi) + \theta\nabla_{\xi}m(|\xi|), \qquad  m(|\xi|) := |\xi|^{s_{\max}}.
\end{align}
The advantage of such an approximation lies in the fact that function $S^\theta$ satisfies Assumption \ref{ass:stress_tensor} with $s(t,x)\equiv s_{\max}$, see Lemma \ref{lem:prop_of_Stheta} below. In particular, the analysis of problems with function $S^\theta$ is substantially easier and can be performed in usual Lebesgue spaces.\\

\noindent Now, we formulate the result concerning existence of solutions to the approximation problem
\begin{equation}\label{eq:approx_problem}
\begin{split}
\p_t(u^{\theta} + \nabla p_h^\theta) + \DIV (u^{\theta} \otimes u^{\theta}) &= \DIV S^{\theta}(t,x, Du^{\theta}) + f  + \sum_{i = 1}^N \nabla(p^{i,\theta}_1 + p^{i,\theta}_2) + \nabla(p_3 + p^{\theta}_4),\\
\DIV u^{\theta}&=0.
\end{split}
\end{equation}

\begin{thm}\label{thm:existence_approximation}
Let $S$ satisfy Assumption \ref{ass:stress_tensor} and $S^\theta$ be defined in \eqref{eq:def_of_reg_operator}. Then, for any $f\in L^{1}_t L^2_x$ and any initial condition $u_0\in L^2_{0,\DIV}(\Omega)$, there exists a function $u^\theta\in L^\infty_t L^2_x \cap L^{s_{\max}}_t W^{1,{s_{\max}}}_{0, x}$, $Du^\theta\in L^{s_{\max}}_{t,x}$, such that $S^{\theta}(t,x,Du^{\theta}) \in L^{s'_{\max}}_{t,x}$ and
\begin{align}\label{app:existence}
    \int_{\Omega_T}-u^\theta \cdot \p_t \phi - u^\theta\otimes u^\theta : \nabla\phi + S^\theta(t, x, Du^\theta):D\phi\diff x\diff t = \int_{\Omega_T}f \cdot \phi\diff x\diff t + \int_{\Omega}u_0(x) \cdot\phi(0,x)\diff x
\end{align}
for any vector-valued $\phi\in C^\infty_c([0, T) \times \Omega)$ fulfilling $\DIV \phi = 0$. Moreover, the following global energy equality is satisfied for all $t\in (0, T)$
\begin{align}\label{app:energy}
    \frac{1}{2}\int_{\Omega}|u^\theta(t, x)|^2\diff x - \frac{1}{2}\int_{\Omega}|u_0(x)|^2\diff x + \int_0^t\int_{\Omega}S^\theta(\tau, x, Du^\theta):Du^\theta\diff x\diff \tau = \int_0^t \int_{\Omega}f \cdot u^\theta\diff x \diff \tau.
\end{align}
\end{thm}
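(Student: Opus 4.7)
The strategy is a classical Galerkin approximation combined with Minty's monotonicity trick, exploiting that $S^\theta$ satisfies Assumption \ref{ass:stress_tensor} with the constant exponent $s_{\max}$. Since $s_{\max}\geq s_{\min}\geq\frac{3d+2}{d+2}$, the convective term is fully controlled by Ladyzhenskaya--Lions type estimates, so the whole analysis can be carried out in standard (non-variable-exponent) Lebesgue and Sobolev spaces.

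First, I would fix a basis $\{w_k\}_{k=1}^\infty$ of $L^2_{0,\DIV}(\Omega)$ consisting of smooth divergence-free functions (e.g.\ eigenfunctions of the Stokes operator), set $V_n:=\mathrm{span}\{w_1,\ldots,w_n\}$, and look for $u^\theta_n(t,x)=\sum_{k=1}^n c^n_k(t)\,w_k(x)$ solving the finite-dimensional system
$$
\int_\Omega \partial_t u^\theta_n\cdot w_k - u^\theta_n\otimes u^\theta_n:\nabla w_k + S^\theta(t,x,Du^\theta_n):Dw_k \diff x = \int_\Omega f\cdot w_k \diff x,\qquad k=1,\ldots,n,
$$
with $u^\theta_n(0)=P_n u_0$. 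Carath\'eodory theory yields a local solution for $c^n_k$; testing with $u^\theta_n$ itself annihilates the convective term by $\DIV u^\theta_n=0$ and zero boundary trace, and the coercivity \ref{coercitivity_stress_tensor} of $S^\theta$ delivers the Galerkin version of \eqref{app:energy}. This immediately yields the uniform bounds
$$
\|u^\theta_n\|_{L^\infty_t L^2_x}+\|u^\theta_n\|_{L^{s_{\max}}_t W^{1,s_{\max}}_{0,x}} + \|S^\theta(\cdot,Du^\theta_n)\|_{L^{s'_{\max}}_{t,x}}\leq C,
$$
hence global-in-time solvability of the ODE system.

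Next I would pass to the limit $n\to\infty$. Reading the equation, $\partial_t u^\theta_n$ is bounded in a suitable dual space: the estimate for the convective term uses exactly the inequality $(s_0/2)'\leq s_{\max}$ computed at the end of the proof of Theorem~\ref{thm:local_energy_equality}. Aubin--Lions then yields $u^\theta_n\to u^\theta$ strongly in $L^2_{t,x}$, and interpolation with the uniform $L^\infty_tL^2_x$ bound together with Sobolev embedding gives convergence in $L^{s_0}_{t,x}$, which is sufficient to pass to the limit in the convective trilinear form. For the nonlinear stress, denoting by $\chi$ the weak $L^{s'_{\max}}_{t,x}$ limit of $S^\theta(\cdot,Du^\theta_n)$, the Galerkin energy equality combined with strong convergence on the right-hand side yields
$$
\limsup_{n\to\infty}\int_{\Omega_T}S^\theta(t,x,Du^\theta_n):Du^\theta_n \diff x\diff t \leq \int_{\Omega_T}\chi:Du^\theta\diff x\diff t,
$$
and the standard Minty argument applied to the monotone map $S^\theta(t,x,\cdot)$ then identifies $\chi=S^\theta(t,x,Du^\theta)$, giving \eqref{app:existence}.

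The one genuine difficulty is the global energy \emph{equality} \eqref{app:energy} rather than just an inequality. Weak lower semicontinuity of the $L^2$ norm and the above Minty identification give the ``$\leq$'' direction by passing to the limit in the Galerkin identity (along with $P_n u_0\to u_0$ in $L^2_x$). For the opposite direction, one tests the limit equation by $u^\theta$ itself. This is admissible because $u^\theta\in L^{s_{\max}}_tW^{1,s_{\max}}_{0,x}$ while the entire right-hand side of the equation (convective part included, by $(s_0/2)'\leq s_{\max}$) lies in $L^{s'_{\max}}(0,T;(W^{1,s_{\max}}_{0,\DIV}(\Omega))^*)$. The Gelfand-triple integration by parts formula then produces $\langle\partial_t u^\theta,u^\theta\rangle=\tfrac12\tfrac{d}{dt}\|u^\theta\|_2^2$, the convective term vanishes by incompressibility, and combining both inequalities yields \eqref{app:energy}. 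As a by-product this also upgrades $Du^\theta_n$ to modular convergence, which, although not needed here, will be important when one later sends $\theta\to 0$.
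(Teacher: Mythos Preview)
Your outline is correct and matches the paper's approach: the paper's own proof of this theorem is a one-line citation to \cite{gwiazda2008onnonnewtonian}, and the Galerkin--monotonicity scheme you describe is precisely what that reference contains. The only point that deserves a word of care is the final Gelfand-triple step for the energy \emph{equality}: since $f\in L^1_tL^2_x$, the time derivative $\partial_t u^\theta$ lies only in $L^1_t\bigl((W^{1,s_{\max}}_{0,\DIV}(\Omega))^*\bigr)$, not in $L^{s'_{\max}}_t$, so the chain rule $\langle\partial_t u^\theta,u^\theta\rangle=\tfrac12\tfrac{d}{dt}\|u^\theta\|_2^2$ requires the (standard but slightly generalized) version valid when the time derivative splits into an $L^{s'_{\max}}_t$-dual part and an $L^1_tL^2_x$ part, paired against $u^\theta\in L^\infty_tL^2_x\cap L^{s_{\max}}_tW^{1,s_{\max}}_{0,x}$; this is routine via mollification in time, but it is not the plain Lions--Magenes statement.
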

In the construction of a solution to \eqref{eq:the_main_result}, we want to let $\theta \to 0$ in \eqref{eq:approx_problem} and \eqref{app:existence}. To this end, we need certain estimates independent of $\theta$, which is the content of the next result.
\begin{thm}\label{res:estimate_on_approx_seq}
Let $\{u^{\theta}\}_{\theta \in (0,1)}$ be the sequence of solutions to \eqref{eq:approx_problem} constructed in Theorem \ref{thm:existence_approximation}. Let $\{p^{i,\theta}_1\}_{\theta \in (0,1)}$, $\{p^{i,\theta}_2\}_{\theta \in (0,1)}$, $\{p_3\}_{\theta \in (0,1)}$, $\{p^{\theta}_4\}_{\theta \in (0,1)}$, $\{p^{\theta}_h\}_{\theta \in (0,1)}$ be the sequences of pressures obtained by Theorem \ref{thm:local_energy_equality} with $
\alpha = S(t,x,Du^{\theta})$, $\beta = \nabla_{\xi}m(|D u^{\theta}|)
$. Then,
\begin{enumerate}[label=(B\arabic*)]
    \item\label{lemmathet_est1} $\{u^{\theta}\}_{\theta \in (0,1)}$ is bounded in $L^{\infty}_t L^2_x$,
    \item\label{lemmathet_est2} $\{Du^{\theta}\}_{\theta \in (0,1)}$ is bounded in $L^{s(t,x)}_{t,x}$,
   \item\label{lemmathet_est3i1/2} $\{ u^{\theta}\}_{\theta \in (0,1)}$ is bounded in $L^{s_{\min}}_t W^{1,s_{\min}}_{x,0}$ and $L^{q_i(t)}(0,T; W^{1,q_i(t)}(\mathcal{B}^i_{2r}))$,
    \item\label{lemmathet_est2and1/2} $\{u^{\theta}\}_{\theta \in (0,1)}$ is bounded in $L^{s_0}_{t,x}$ and $L^{R_i(t)}((0,T)\times \mathcal{B}^i_{2r})$,
    \item\label{lemmathet_est3}  $\{S(t,x,Du^{\theta})\}_{\theta \in (0,1)}$ is bounded in $L^{s'(t,x)}_{t,x}$,
    \item\label{lemmathet_est4} $\{\theta \, |D u^{\theta}|^{s_{\max}}\}_{\theta \in (0,1)}$ is bounded in $L^1_{t,x}$,
    \item\label{lemmathet_est5} $\{\theta^{1-s'_{\max}} \left|\theta\,\nabla_{\xi}m(|D u^{\theta}|)\right|^{s'_{\max}}\}_{\theta \in (0,1)}$ is bounded in $L^1_{t,x}$,
\item\label{lemmathet_est4and1/2-A} $\{p^{i,\theta}_1\}_{\theta\in(0, 1)}$ is bounded in $L^{r_i'(t)}_{t,x}$ and $L^{s'_{\max}}(0,T; L^{\infty}_{\text{loc}}(\R^d\setminus \mathcal{B}^i_{r}))$,
\item\label{lemmathet_est4and1/2-C} $\{p^{i,\theta}_2\}_{\theta\in(0, 1)}$ is bounded in $L^{R_i(t)/2}_{t,x}$ and $L^{s_0/2}(0,T; L^{\infty}_{\text{loc}}(\R^d\setminus \mathcal{B}^i_{r}))$,
\item\label{lemmathet_est4and1/2-F} $\{ \theta^{-1/s_{\max}} \, p^\theta_4\}_{\theta\in(0, 1)}$ is bounded in $L^{s'_{\max}}_{t,x}$
    \item\label{lemmathet_est9and5/8} $\{p^\theta_h\}_{\theta\in(0, 1)}$ is bounded in $L^\infty_t L^{s'_{\max}}_x$
    \item\label{lemmathet_est6} $\{p^{\theta}_h\}_{\theta\in (0, 1)}$ is bounded in $L^{\infty}_tW^{2,\infty}_{x, loc}$,
    \item\label{lemmathet_est8} $\{\partial_t u^\theta\}_{\theta\in (0, 1)}$ is bounded in $L^1_t V_{2,d}^*$,
    \item\label{lemmathet_est7} $\{ \partial_t (u^{\theta} + \nabla p_h^\theta) \}_{\theta \in (0,1)}$ is bounded in $L^{1}_t\left( W^{1,s_{\max}}_{x,0}\right)^*$.
\end{enumerate}
where $V_{2,d}$ is closure of $\{\phi\in C^\infty_c(\Omega) | \DIV\phi = 0\}$ in $W^{2,d}(\Omega)$.
\end{thm}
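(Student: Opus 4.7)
The strategy is to extract a core energy bound from \eqref{app:energy}, and then derive spatial estimates via Korn--Sobolev and Gagliardo--Nirenberg interpolation, pressure estimates via Lemma \ref{lem:integrability_of_pi} and Theorem \ref{thm:local_energy_equality}, and time-derivative estimates from the equation itself.

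Plugging coercivity \ref{coercitivity_stress_tensor} into \eqref{app:energy} and using $\nabla_\xi m(|\xi|) : \xi = s_{\max}|\xi|^{s_{\max}}$ gives, after H\"older and Young applied to $\int f \cdot u^\theta$ (absorbing $\|u^\theta\|_{L^\infty_t L^2_x}$ into the left-hand side), the uniform bound $\sup_{t}\|u^\theta(t)\|^2_{L^2_x}+\int_{\Omega_T} (|Du^\theta|^{s(t,x)} + |S(t,x,Du^\theta)|^{s'(t,x)}) + c\,\theta\int_{\Omega_T}|Du^\theta|^{s_{\max}} \leq C$. This yields \ref{lemmathet_est1}, \ref{lemmathet_est2}, \ref{lemmathet_est3}, \ref{lemmathet_est4} simultaneously; \ref{lemmathet_est5} then follows from the algebraic identity $\theta^{1-s'_{\max}}|\theta\nabla_\xi m(|Du^\theta|)|^{s'_{\max}} = s_{\max}^{s'_{\max}}\theta|Du^\theta|^{s_{\max}}$. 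The spatial regularity \ref{lemmathet_est3i1/2} comes from \ref{lemmathet_est2} combined with classical Korn (on $\Omega$, zero trace) and Lemma \ref{lem:simple_integrability_lemma} (locally on $\mathcal{B}^i_{2r}$). The parabolic embeddings \ref{lemmathet_est2and1/2} are the standard Gagliardo--Nirenberg interpolation $L^\infty_t L^2_x\cap L^p_t W^{1,p}_x\hookrightarrow L^{p(1+2/d)}_{t,x}$, applied globally with $p=s_{\min}$ (for which $s_{\min}(1+2/d)=s_0$) and slicewise in time with $p=q_i(t)$ on $\mathcal{B}^i_{2r}$ (for which $q_i(t)(1+2/d)=R_i(t)$).

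For the pressure estimates, I apply Lemma \ref{lem:integrability_of_pi} with $\alpha = S(\cdot,\cdot,Du^\theta)$ and $\beta = \nabla_\xi m(|Du^\theta|)$. Since $r_i'(t)\le s'(t,x)$ on $\mathrm{supp}\,\zeta_i\subset \mathcal{B}^i_{2r}$, the bound \ref{lemmathet_est3} implies $\|S\zeta_i\|_{L^{r_i'(t)}_{t,x}}\le C$ via the Musielak--Orlicz comparison $|f|^{r_i'(t)}\le 1+|f|^{s'(t,x)}$; combined with \ref{lemmathet_est2and1/2} (which controls $\|u^\theta\sqrt{\zeta_i}\|^2_{L^{R_i(t)}_{t,x}}$) and with $\|\theta\beta\|_{L^{s'_{\max}}_{t,x}}\le C\theta^{1/s_{\max}}$ extracted from \ref{lemmathet_est5}, the Lemma produces \ref{lemmathet_est4and1/2-A}, \ref{lemmathet_est4and1/2-C}, and \ref{lemmathet_est4and1/2-F} directly. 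The harmonic-pressure bounds \ref{lemmathet_est9and5/8} and \ref{lemmathet_est6} follow from Theorem \ref{thm:local_energy_equality}, since the constant in \eqref{eq:thm_estimate_on_p_Hol} depends only on quantities already controlled uniformly in $\theta$.

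For the time derivatives, \ref{lemmathet_est8} is obtained by testing \eqref{app:existence} against $\phi\in V_{2,d}$: the convective term is controlled by $\|u^\theta\|^2_{L^{s_0}_x}\|\nabla\phi\|_{L^{(s_0/2)'}_x}$, the stress term by $\|S^\theta\|_{L^{s'_{\max}}_x}\|D\phi\|_{L^{s_{\max}}_x}$ (noting that $S^\theta$ is uniformly bounded in $L^{s'_{\max}}_{t,x}$, since $L^{s'(t,x)}\hookrightarrow L^{s'_{\max}}$ on finite-measure domains and by \ref{lemmathet_est5}), and the force term is immediate; Sobolev embedding $W^{2,d}\hookrightarrow W^{1,q}$ for all $q<\infty$ then dominates all three $\phi$-norms by $\|\phi\|_{V_{2,d}}$. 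Estimate \ref{lemmathet_est7} is obtained analogously by testing the pressure-decomposed equation \eqref{eq:distributional_id_with_ph} against an arbitrary $\phi\in W^{1,s_{\max}}_{0,x}$ (not required to be divergence-free), the additional pressure integrals being absorbed by \ref{lemmathet_est4and1/2-A}--\ref{lemmathet_est9and5/8}. The main obstacle is the variable-exponent bookkeeping, particularly in Step 3 and the pressure step: the local integrability bounds are genuinely slicewise-in-time, so the interpolation of $L^{R_i(t)}((0,T)\times \mathcal{B}^i_{2r})$ must use the per-time-slice constancy of $q_i(t)$ and $R_i(t)$ afforded by the covering of Lemma \ref{lem:decomposition_Omega}, and the comparison $r_i'(t)\le s'(t,x)$ on $\mathrm{supp}\,\zeta_i$ must be combined correctly with the partition of unity to bring Lemma \ref{lem:integrability_of_pi} into play.
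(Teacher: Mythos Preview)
Your proposal is correct and follows essentially the same route as the paper's proof: energy equality plus coercivity gives \ref{lemmathet_est1}, \ref{lemmathet_est2}, \ref{lemmathet_est3}, \ref{lemmathet_est4}, \ref{lemmathet_est5}; Korn and the interpolation Lemma~\ref{thm:interpolation} give \ref{lemmathet_est3i1/2} and \ref{lemmathet_est2and1/2}; Lemma~\ref{lem:integrability_of_pi} and Theorem~\ref{thm:local_energy_equality} give the pressure bounds; and the equation itself gives the time-derivative bounds. The only cosmetic difference is that the paper handles the forcing term via the estimate $\int_\Omega f\cdot u^\theta \le \|f\|_2(\|u^\theta\|_2^2+1)$ followed by Gr\"onwall, whereas you absorb $\|u^\theta\|_{L^\infty_tL^2_x}$ directly after Young's inequality; both are standard and equivalent here.
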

The rest of this section is devoted to the proofs of Theorems \ref{thm:existence_approximation} and \ref{res:estimate_on_approx_seq}. We begin by establishing certain properties of function $S^\theta$.
\begin{lem}\label{lem:prop_of_Stheta}
Function $S^\theta$ satisfies the following:
\begin{enumerate}[label=(R\arabic*)]
\item \label{monotonicity_theta} $S^{\theta}(t,x, \xi)$ is a Carath\'{e}odory function and $S(t, x, 0) = 0$,
\item \label{coercitivity_stress_tensor_theta} (coercitivity and growth in $L^{s(t,x)}$) there exists a positive constant $c$ and a non-negative, integrable function $h(t, x)$, such that for any $\xi\in\symm$ and almost every $(t, x) \in (0, T) \times \Omega$
$$
c \, S^{\theta}(t, x, \xi) : \xi \geq | \xi|^{s(t, x)} + | S(t, x, \xi)|^{s'(t, x)} + \theta\nabla_{\xi}m(|\xi|) \cdot \xi - h(t, x);
$$
the constant $c$ and function $h$ can be chosen independently of $\theta$,
\item \label{coercitivity_stress_tensor_theta_r} (coercitivity and growth in $L^{s_{\max}}$) there exists a positive constant $c^{\theta}$ and a non-negative, integrable function $h^{\theta}(t, x)$, such that for any $\xi\in\symm$ and almost every $(t, x) \in (0, T) \times \Omega$
$$
c^{\theta} \, S^{\theta}(t, x, \xi) : \xi \geq | \xi|^{s_{\max}} + | S^{\theta}(t, x, \xi)|^{s'_{\max}} - h^\theta(t, x)
$$
\item \label{monotonicity_stress_tensor_theta}(monotonicity) S is strictily monotone, i. e.:
$$
(S^{\theta}(t, x, \xi_1) - S^{\theta}(t, x, \xi_2)) : (\xi_1 - \xi_2) > 0
$$
for all $\xi_1 \neq \xi_2\in \symm$ and almost every $(t, x) \in (0, T) \times \Omega$.
\end{enumerate}
\end{lem}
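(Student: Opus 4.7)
The proof hinges on the elementary identities
$$
\nabla_\xi m(|\xi|):\xi = s_{\max}|\xi|^{s_{\max}}, \qquad |\nabla_\xi m(|\xi|)|^{s'_{\max}} = s_{\max}^{s'_{\max}}|\xi|^{s_{\max}},
$$
the second one being a consequence of $(s_{\max}-1)s'_{\max}=s_{\max}$. Property \ref{monotonicity_theta} is then immediate from Assumption~\ref{ass:stress_tensor}\ref{T1} together with the continuity of $\nabla_\xi m$ on $\symm$ and the fact that $\nabla_\xi m$ vanishes at $\xi=0$.

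For \ref{coercitivity_stress_tensor_theta} the plan is simply to add the nonnegative quantity $c\theta\,\nabla_\xi m(|\xi|):\xi$ to both sides of \ref{coercitivity_stress_tensor}; the constant $c$ and function $h$ can be taken to coincide with those of $S$, and are therefore independent of $\theta$. No further work is needed.

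The main content lies in \ref{coercitivity_stress_tensor_theta_r}. The $|\xi|^{s_{\max}}$-coercivity comes directly from the identity $\theta\,\nabla_\xi m(|\xi|):\xi=\theta s_{\max}|\xi|^{s_{\max}}$, which shows that a constant of order $\theta^{-1}$ suffices. For the stress piece, I decompose
$$
|S^\theta|^{s'_{\max}} \le C\bigl(|S|^{s'_{\max}} + \theta^{s'_{\max}}|\xi|^{s_{\max}}\bigr)
$$
and observe that since $s(t,x)\le s_{\max}$ implies $s'(t,x)\ge s'_{\max}$, one has $|S|^{s'_{\max}} \le 1+|S|^{s'(t,x)}$, which combined with \ref{coercitivity_stress_tensor} gives $|S|^{s'_{\max}} \le 1+h+cS:\xi$. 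The remaining contribution $\theta^{s'_{\max}}|\xi|^{s_{\max}}$ is again absorbed by the coercive term $c^\theta\theta s_{\max}|\xi|^{s_{\max}}$, at the price of enlarging $c^\theta$. The resulting constant $c^\theta$ blows up like $\theta^{-1}$, and $h^\theta$ differs from $h$ only by an additive constant; this loss of uniformity in $\theta$ is exactly the reason why \ref{coercitivity_stress_tensor_theta} is retained as a separate, $\theta$-uniform bound to be used for passing $\theta\to 0$ in the next section.

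Finally, \ref{monotonicity_stress_tensor_theta} follows because $\xi\mapsto\theta|\xi|^{s_{\max}}$ is strictly convex on $\symm$ (since $s_{\max}>1$), so its gradient $\theta\,\nabla_\xi m(|\xi|)$ is strictly monotone; adding the (possibly non-strict) monotonicity of $S$ given by \ref{monotonicity_stress_tensor} preserves the strict inequality whenever $\xi_1\ne\xi_2$. The only mildly delicate point in the whole argument is the careful tracking of the $\theta$-dependence in \ref{coercitivity_stress_tensor_theta_r}; everything else is direct bookkeeping from the definition \eqref{eq:def_of_reg_operator}.
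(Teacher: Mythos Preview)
Your proof is correct and follows essentially the same route as the paper: the same elementary identities for $\nabla_\xi m(|\xi|):\xi$ and $|\nabla_\xi m(|\xi|)|^{s'_{\max}}$, the same reduction $|S|^{s'(t,x)}\ge |S|^{s'_{\max}}-1$, and the same use of strict convexity of $|\cdot|^{s_{\max}}$ for \ref{monotonicity_stress_tensor_theta}. The only stylistic difference is in \ref{coercitivity_stress_tensor_theta_r}: the paper combines $|S|^{s'_{\max}}$ and $|\theta\nabla_\xi m|^{s'_{\max}}$ from below via Jensen's inequality to produce $|S^\theta|^{s'_{\max}}$, whereas you split $|S^\theta|^{s'_{\max}}$ from above into the two pieces and bound each separately---these are dual versions of the same convexity estimate. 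One minor imprecision: your $h^\theta$ is not $h$ plus an additive constant but a fixed multiple of $h$ plus a constant (in fact the paper's $h^\theta$ even carries a factor $\theta^{-1}$); this is harmless since only integrability of $h^\theta$ is needed.
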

\begin{proof}
Properties \ref{monotonicity_theta} and \ref{monotonicity_stress_tensor_theta} are fairly obvious. To see \ref{coercitivity_stress_tensor_theta} and \ref{coercitivity_stress_tensor_theta_r}, we first note that by the definition of the convex conjugate we have
\begin{align}\label{eq:gradient_Young_equality}
    \nabla_\xi m(|\xi|)\cdot\xi = |\xi|^{s_{\max}}+C_*\,|\nabla_\xi m(|\xi|)|^{s'_{\max}}, \qquad C_* := \frac{1}{s'_{\max}\,s_{\max}^{s'_{\max}-1}}.
\end{align}
As $S$ satisfies \ref{coercitivity_stress_tensor} in Assumption \ref{ass:stress_tensor}, we have
\begin{align*}
    c\, S^\theta(t, x, \xi) \cdot \xi &\geq |\xi|^{s(t, x)} + |S(t, x, \xi)|^{s'(t, x)} - h(t, x) + c\,\theta \nabla_{\xi}m(|\xi|)\cdot\xi
\end{align*}
so that we obtain \ref{coercitivity_stress_tensor_theta}. To see \ref{coercitivity_stress_tensor_theta_r}, we estimate term $S^\theta(t, x, \xi) \cdot \xi$ more carefully using \eqref{eq:gradient_Young_equality}:
\begin{align*}
    c\, S^\theta(t, x, \xi) \cdot \xi &\geq |\xi|^{s(t, x)} + |S(t, x, \xi)|^{s'(t, x)} - h(t, x) + c\,\theta \nabla_{\xi}m(|\xi|)\cdot\xi\\
    &\geq 0 + |S(t, x, \xi)|^{s'_{\max}} - 1 - h(t, x) + c\,\theta \,|\xi|^{s_{\max}} + c\,\theta \,C_*\,|\nabla_{\xi}m(|\xi|)|^{s'_{\max}},
\end{align*}
where we estimated $|S(t, x, \xi)|^{s'(t, x)} \geq |S(t, x, \xi)|^{s'_{\max}} - 1$ which follows from $s'(t,x) \geq s'_{\max}$. Applying Jensen's inequality, we obtain
\begin{align*}
    |S(t, x, \xi)|^{s'_{\max}} &- h(t, x) + c\,\theta \,|\xi|^{s_{\max}} + c\,\theta \, C_* \,|\nabla_{\xi}m(|\xi|)|^{s'_{\max}} \geq \\
    & \geq 2\min{(1, c\,C_*)}\left(\frac{1}{2}|S(t, x, \xi)|^{s'_{\max}} + \frac{1}{2}|\theta \nabla_{\xi}m(|\xi|)|^{s'_{\max}}\right) + c\,\theta|\xi|^{s_{\max}} - h(t,x) - 1\\
    &\geq 2\min{(1, c\,C_*)}\left|\frac{1}{2}S^\theta(t, x, \xi)\right|^{s'_{\max}} + c\,\theta|\xi|^{s_{\max}} - h(t, x) - 1\\
    &\geq \min{(\min{(1, c\,C_*)} \, 2^{1-s'_{\max}}, c\,\theta)}(|S^\theta(t, x, \xi)|^{s'_{\max}} + |\xi|^{s_{\max}}) - h(t, x) - 1
\end{align*}
Thus, taking
$$
c^{\theta} := \frac{c}{\min{(\min{(1, c\,C)}2^{1-s'_{\max}}, c\,\theta)}}\text{, } \qquad h^{\theta}(t, x) := \frac{h(t, x) + 1}{\min{(\min{(1, c\,C)}2^{1-s'_{\max}}, c\,\theta)}}
$$
concludes the proof of \ref{coercitivity_stress_tensor_theta_r}.
\end{proof}

\begin{proof}[Proof of Theorem \ref{thm:existence_approximation}] The proof of this theorem follows the lines of the proof in \cite{gwiazda2008onnonnewtonian}. The only difference is the dependence of stress tensor $S$ on time variable.
\end{proof}

\begin{proof}[Proof of Theorem \ref{res:estimate_on_approx_seq}]
We combine the energy equality~\eqref{app:energy} and coercivity estimate~\ref{coercitivity_stress_tensor_theta} in~Lemma~\ref{lem:prop_of_Stheta} to deduce
\begin{multline*}
   \frac{1}{2}\int_{\Omega}|u^\theta(t, x)|^2\diff x + \frac{1}{c} \int_{\Omega_t} \left(
   | Du|^{s(\tau, x)}
   +  | S(\tau, x, Du)|^{s'(\tau, x)}
   + c\,\theta\nabla_{\xi}m(|D u^{\theta}|) \cdot D u^{\theta} \right) \diff x \diff \tau = \\ =  \frac{1}{2}\int_{\Omega}|u_0(x)|^2\diff x  + \int_0^t \int_{\Omega}f\cdot u^\theta\diff x \diff \tau +  \int_0^t \int_{\Omega} h(\tau,x) \diff x \diff \tau.
 \end{multline*}
Using the H\"{o}lder inequality, we can estimate $\int_{\Omega} f\cdot u^{\theta}\le \|f\|_2(\|u^{\theta}\|_2^2+1)$ on the right hand side.  Using the Gr\"{o}nwall lemma and also the assumptions on $f$, $u_0$ and $h$, we deduce the right hand side is bounded independently of $\theta$ and consequently, we conclude the proof of \ref{lemmathet_est1}, \ref{lemmathet_est2} and \ref{lemmathet_est3}. Moreover, it shows that $\int_{\Omega_T} \theta\nabla_{\xi}m(|D u^{\theta}|) \cdot D u^{\theta}  \diff x \diff \tau$ is bounded uniformly in $\theta \in (0,1)$. But then, using \eqref{eq:gradient_Young_equality} we deduce
 $$
 \int_{\Omega_T} \left(\theta \, |Du|^{s_{\max}} + \theta \, C_r\,|\nabla_\xi m(|Du|)|^{s'_{\max}}\right) \diff x \diff \tau = \int_{\Omega_T} \theta\nabla_{\xi}m(|D u^{\theta}|) \cdot D u^{\theta}  \diff x \diff \tau.
 $$
 This implies \ref{lemmathet_est4} and \ref{lemmathet_est5}.\\

\noindent To see \ref{lemmathet_est3i1/2}, we observe that \ref{lemmathet_est2} implies that $\{Du^{\theta}\}_{\theta \in (0,1)}$ is bounded in $L^{s_{\min}}_{t,x}$ and $L^{q_i(t)}_{t,x}((0,T)\times \mathcal{B}^i_{2r})$ (because $s_{\min} \leq s(t,x)$ on $\Omega_T$ and $q_i(t) \leq s(t,x)$ on $(0,T)\times \mathcal{B}^i_{2r}$). Then, K\"{o}rn's inequality implies that $\{\nabla u^{\theta}\}_{\theta \in (0,1)}$ is bounded in $L^{s_{\min}}_{t,x}$ and $L^{q_i(t)}_{t,x}((0,T)\times \mathcal{B}^i_{2r})$. To conclude the estimate, we note that the $\int_{\Omega}u^{\theta}(t,x)\diff x$ is controlled in $L^{\infty}_t$ by \ref{lemmathet_est1} so that the claim follows by the Poincar\'{e} inequality. Next, estimate \ref{lemmathet_est2and1/2} follows from \ref{lemmathet_est1} and \ref{lemmathet_est3i1/2} together with Lemma~\ref{thm:interpolation}.\\

\noindent To obtain estimates on the pressures we apply Theorem \ref{thm:local_energy_equality} and Lemma \ref{lem:integrability_of_pi} with
$$
\alpha = S^{\theta}(t,x,Du^{\theta}), \qquad \beta = \nabla_{\xi}m(|D u^{\theta}|)
$$
so that \ref{lemmathet_est4and1/2-A}--\ref{lemmathet_est6} follows from \ref{lemmathet_est2and1/2}, \ref{lemmathet_est3} and \ref{lemmathet_est5}.\\

\noindent The bound \ref{lemmathet_est8} can be obtained by the following argument. We have (for divergence-free distributional formulation):
$$
\p_t u^\theta = \underbrace{- \DIV (u^{\theta} \otimes u^{\theta}) + \DIV S^{\theta}(t,x, Du^{\theta})  + f}_{:=\, A^{\theta}}
$$
We want to prove that $A^\theta$ defines a functional on $L^\infty_t V_{2,d}$. This is clear because functions in $L^\infty_t V_{2,d}$ have spatial derivatives in $L^{\infty}_t L^{z}_x$ for all $z<\infty$ and all the functions $u^{\theta} \otimes u^{\theta}$, $S^{\theta}(t,x, Du^{\theta})$ and $f$ are at least uniformly integrable in time and belong to some $L^a$ with $a>1$ with the norm independent of $\theta \in (0,1)$.\\

\noindent Now, we move to establishing the regularity of the time derivative $\partial_t (u^\theta + \nabla p_h^\theta)$ as in \ref{lemmathet_est7}. In view of \eqref{eq:def_of_reg_operator} and \eqref{eq:approx_problem}, we can write (in the sense of distributions)
$$
\p_t(u^{\theta} + \nabla p_h^\theta) ={- \DIV (u^{\theta} \otimes u^{\theta}) + \DIV S^{\theta}(t,x, Du^{\theta})  + f} + \,\sum_{i=1}^N\nabla (p^{i,\theta}_1 + p^{i,\theta}_2) + \nabla (p_3 + p^{\theta}_4).
$$
We observe that all of the functions $u^{\theta} \otimes u^{\theta}$, $S^{\theta}(t,x, Du^{\theta})$, $f$, $p_i^{\theta}$ are uniformly bounded at least in $L^1_tL^{s'_{\max}}_{x}$ (this uses inequalities $s'_{\max} \leq s'_{\min}$ and equality $s_0/2 = s'_{\min}$) so that $\partial_t(u^{\theta}+ \nabla p_h^{\theta})$ is bounded in $L^{1}_t (W^{1,s'_{\max}}_{x_0})^*$, hence \ref{lemmathet_est7} holds. 
\end{proof}

\section{Proof of existence result via the monotonicity method}\label{section:6}
\begin{proof}[Proof of Theorem \ref{thm:the_main_result}] The proof is divided into four steps.\\

\noindent \underline{Step 1: Approximating problem and compactness.}
Let $u^{\theta}$ be a solution to \eqref{eq:approx_problem} constructed in Theorem \ref{thm:existence_approximation}. Let $p^{i,\theta}_1$, $p^{i,\theta}_2$, $p_3$, $p^{\theta}_4$, $p^\theta_h$ be the sequences of pressures obtained in Theorem \ref{thm:local_energy_equality}. First, thanks to Theorem \ref{res:estimate_on_approx_seq}, we can extract appropriate subsequences such that
\begin{enumerate}[label=(C\arabic*)]                        
\item\label{c0} $u^{\theta}\overset{*}{\rightharpoonup} u$ in $L^{\infty}_{t}L^2_x$,
\item\label{conv_item_strongu} $u^{\theta} \to u$ a.e. in $\Omega_T$ and in $L^{c}_{t,x}$ for all $c < s_0$,
    \item\label{conv_item_strongu_c1c2} $u^{\theta} \to u$ in $L^{c_1}_{t} L^{c_2}_{x}$ for all $c_1 < \infty$ and $c_2<2$,
    \item\label{conv_item_strongu_local} $u^{\theta} \to u$ in $L^{R_i(t) - \delta}((0,T)\times \mathcal{B}^i_{2r})$ for all $\delta>0$,
    \
    \item\label{conv_item_sing_u} $\theta^{1/s_{\max}}\, u^{\theta} \to 0$ in $L^{s_{\max}}_{t,x}$,
    \item\label{conv_item_weak_monotone_operator_S} $S(t,x,Du^{\theta}) \rightharpoonup \chi$ in $L^{s'(t,x)}(\Omega_T)$ and $L^{r_i'(t)}_{t,x}((0,T)\times \mathcal{B}^i_{2r})$ for some $\chi \in L^{s'(t,x)}(\Omega_T)$,
    \item\label{conv_item_Dutheta} $Du^{\theta} \rightharpoonup Du$ weakly in $L^{s(t,x)}(\Omega_T)$,
    \item\label{conv_item_L1_weak} $\theta\,\nabla_{\xi}m(|D u^{\theta}|) \to 0$ in $L^{s_{\max}}_{t,x}$,
\item\label{conv_item_est4and1/2-A} $p^{i,\theta}_1 \overset{*}{\rightharpoonup} \widetilde{p^i_1}$ in $L^{r_i'(t)}_{t,x}$ and $L^{s'_{\max}}(0,T; L^{\infty}_{\text{loc}}(\R^d\setminus \mathcal{B}^i_{r}))$,
\item\label{conv_item_est4and1/2-C} $p^{i,\theta}_2 \overset{*}{\rightharpoonup} \widetilde{p^i_2}$ in $L^{R_i(t)/2}_{t,x}$ and $L^{s_0/2}(0,T; L^{\infty}_{\text{loc}}(\R^d\setminus \mathcal{B}^i_{r}))$,
\item\label{conv_item_est4and1/2-F} $p^\theta_4 \to 0$ in $L^{s'_{\max}}_{t,x}$,
    \item\label{conv_item_est9and5/8} $p^\theta_h \overset{*}{\rightharpoonup} \widetilde{p_h}$ in $L^{s'_{\max}}_{t,x}$ and $L^{\infty}_tW^{2,\infty}_{x, loc}$,
    \item\label{conv_item_est_add_1} $\nabla p^\theta_h \to \nabla \widetilde{p_h}$ in $L^{c}_{t} L^c_{x,loc}$ for all $c <\infty$,
    \item\label{conv_item_est_add_2/5} $\nabla^2 p^\theta_h \to \nabla^2 \widetilde{p_h}$ in $L^{c}_{t} L^c_{x,loc}$ for all $c <\infty$,
    \item\label{conv_item_est_add_2} $D(\nabla p^\theta_h) \to D(\nabla \widetilde{p_h})$ in $L^{c}_{t} L^c_{x,loc}$ for all $c <\infty$,
    \item\label{conv_item_spaceintegrals} $\int_{\Omega} |(u^{\theta} + \nabla p_h^\theta)(t,x)|^2\psi(x) \diff x \to \int_{\Omega} |(u + \nabla \widetilde{p_h})(t,x)|^2\psi(x) \diff x$ for a.e. $t \in [0,T]$ and $\psi \in C^\infty_c(\Omega)$,
\end{enumerate}
Indeed, the strong convergence in $L^{c}_{t,x}$ in \ref{conv_item_strongu} follows by interpolation: the sequence $\{u^{\theta}\}$ is bounded in $L^{s_0}_x$ (see \ref{lemmathet_est2and1/2}) and $\{u^{\theta}\}$ is strongly compact in $L^1_{t,x}$ by Aubin--Lions lemma \ref{aubin-lions} (this uses \ref{lemmathet_est3i1/2} and \ref{lemmathet_est8}). Similarly, we obtain \ref{conv_item_strongu_c1c2} and \ref{conv_item_strongu_local}, this time exploiting uniform bounds of $\{u^{\theta}\}$ in $L^2_t L^{\infty}_x$ and $L^{R_i(t)}((0,T)\times \mathcal{B}^i_{2r})$. To see \ref{conv_item_sing_u}, we note that \ref{lemmathet_est4} and K\"{o}rn's inequality implies uniform bound $\{\theta^{1/s_{\max}} \nabla u^{\theta}\}$ in $L^{s_{\max}}_{t,x}$ so that by Sobolev embedding and Dirichlet boundary condition we have uniform bound $\{\theta^{1/s_{\max}} u^{\theta}\}$ in $L^{c}_{t,x}$ for some $c > s_{\max}$. As $\theta^{1/s_{\max}} u^{\theta} \to 0$ in $L^1_{t,x}$, we conclude by interpolation. Next, convergence results \ref{conv_item_weak_monotone_operator_S}--\ref{conv_item_est9and5/8}
follow from Banach--Alaoglu theorem and estimates \ref{lemmathet_est3}, \ref{lemmathet_est2} and \ref{lemmathet_est5}--\ref{lemmathet_est6},
respectively. Next, we can use \ref{lemmathet_est3i1/2}, \ref{lemmathet_est6},  \ref{lemmathet_est7} and the Aubin--Lions \ref{aubin-lions} lemma to conclude that 
$$
(u^{\theta}+\nabla p_h^{\theta}) \to (u+\nabla \widetilde{p_h}) \qquad \textrm{ in } L^{1}_{t} L^{1}_{x, loc}.
$$
Thus, \ref{conv_item_est_add_1} follows from \ref{lemmathet_est6}. Finally, since $p_h^{\theta}$ is harmonic with respect to the spatial variable, we have that $\|p^{\theta}_h - \widetilde{p_h}\|_{W^{k,2}(\Omega'')}\le C(k,\Omega'', \Omega')\|p^{\theta}_h - \widetilde{p_h}\|_{L^1(\Omega')}$ for all $\Omega'' \Subset \Omega' \subset \Omega$ and all $k$. Consequently, \ref{conv_item_est_add_2/5} and \ref{conv_item_est_add_2} follow from \ref{conv_item_est_add_1}. The last property~\ref{conv_item_spaceintegrals} holds true because of the presence of the function $\psi$ having compact support in $\Omega$ and thus we can combine \ref{conv_item_est_add_1} and \ref{conv_item_strongu_c1c2} and use the classical properties of the Lebesgue spaces.\\

\noindent Now, for each $\theta \in (0,1)$ we use Theorem~\ref{thm:existence_approximation} to have a distributional formulation without pressure:
\begin{align}\label{app:existence_in_final_proof}
    \int_{\Omega_T}-u^\theta \cdot \p_t\phi - u^\theta\otimes u^\theta : \nabla\phi + S^\theta(t, x, Du^\theta):D\phi\diff x\diff t = \int_{\Omega_T}f \cdot \phi\diff x\diff t + \int_{\Omega}u_0(x)\cdot \phi(0,x)\diff x.
\end{align}
satisfied for all vector-valued  $\phi\in C^\infty_c([0, T) \times \Omega)$ fulfilling  $\DIV \phi = 0$.  We can let $\theta \to 0$ in \eqref{app:existence_in_final_proof} to obtain
\begin{align}\label{app:existence_in_final_proof_after_limit}
    \int_{\Omega_T}-u \cdot \p_t \phi- u \otimes u : \nabla\phi + \chi:D\phi\diff x\diff t = \int_{\Omega_T}f \cdot\phi\diff x\diff t + \int_{\Omega}u_0(x) \cdot\phi(0,x)\diff x.
\end{align}
The only nontrivial step in the passage to the limit above concerns operator $S^{\theta}(t,x,Du^{\theta})$. However, by \eqref{eq:def_of_reg_operator}, we may write $S^{\theta}(t,x,Du^{\theta}) = S(t,x,Du^{\theta}) + \theta \nabla_{\xi} m(|Du^{\theta}|)$. Then, by \ref{conv_item_L1_weak}, we know that the regularizing term converges in $L^1_t L^1_x$ which is sufficient to perform the desired passage to the limit $\theta \to 0$.\\

\noindent In view of \eqref{app:existence_in_final_proof_after_limit}, the proof of existence of solutions will be concluded if we prove $\chi(t,x) = S(t,x,Du)$.\\

\noindent \underline{Step 2: Local energy equalities.} Applying Theorem~\ref{thm:local_energy_equality}, we also have a distributional formulation with pressure
\begin{equation}\label{eq:weak_form_with_press_and_IC_in_final_proof}
\begin{split}
    \int_{\Omega_T}-&(u^{\theta} + \nabla p_h^\theta)\cdot \p_t\phi - u^{\theta}\otimes u^{\theta}:\nabla\phi \,+\, S^{\theta}(t,x,Du^{\theta}) : D\phi\diff x \diff t = \\ 
    = \int_{\Omega}&u_0(x)\cdot \phi(0, x) \diff x -  \int_{\Omega_T} \left(\sum_{i = 1}^N(p_1^{i,\theta}+p_2^{i,\theta} ) + p_3 + p_4^{\theta} \right)\,\DIV\phi\diff x \diff t + \int_{\Omega_T}f \cdot \phi\diff x \diff t
\end{split}
\end{equation}
satisfied for all $\phi\in C^\infty_c([0, T) \times \Omega)$. We can let $\theta \to 0$ in \eqref{eq:weak_form_with_press_and_IC_in_final_proof} similarly as above to obtain
\begin{equation}\label{eq:weak_form_with_press_and_IC_in_final_proof_after_limit}
\begin{split}
    \int_{\Omega_T}-&(u + \nabla \widetilde{p_h}) \cdot \p_t\phi - u\otimes u:\nabla\phi \,+\, \chi : D\phi\diff x \diff t = \\
     = \int_{\Omega}&u_0(x) \cdot\phi(0, x) \diff x - \left(\int_{\Omega_T}\sum_{i = 1}^N (\widetilde{p^i_1} +\widetilde{p^i_2}) + \widetilde{p_3} + \widetilde{p_4}\right) \,\DIV\phi\diff x \diff t + \int_{\Omega_T}f \cdot \phi\diff x \diff t.
\end{split}
\end{equation}
On the other hand, we may apply Theorem~\ref{thm:local_energy_equality} directly to \eqref{app:existence_in_final_proof_after_limit}. This yields pressures $p^i_1$, $p^i_2$, $p_3$, $p_4$ and $p_h$ with a distributional formulation as \eqref{eq:weak_form_with_press_and_IC_in_final_proof_after_limit} but with $\widetilde{p_j^i}$, $\widetilde{p_j}$ and $\widetilde{p_h}$ replaced by ${p_j^i}$, $p_j$ and $p_h$ respectively. By the uniqueness (linearity) in the Lemma \ref{lem:integrability_of_pi}, we obtain $\widetilde{p_j^i} = p_j^i$ and $\widetilde{p_j} = p_j$ almost everywhere. On the other hand, $p_h$ is obtained from the Ne\v{c}as theorem \ref{lem:deRham} uniquely up to the condition
$$
\int_{\Omega}p_h(t, x)\diff x = 0.
$$
But from the weak convergence \ref{conv_item_est9and5/8}, the strong convergence \ref{conv_item_est_add_1} and the Poincar\'{e} inequality, we may deduce that for almost all $t\in (0,T)$
$$
0 = \lim_{\theta\to 0}\int_{\Omega}p_h^\theta(t, x)\diff x = \int_{\Omega}\widetilde{p_h}(t, x)\diff x.
$$
Hence $\widetilde{p_h} = p_h$.
For further reference, we recall local energy equalities obtained from \eqref{app:existence_in_final_proof} and \eqref{app:existence_in_final_proof_after_limit} by Theorem \ref{thm:local_energy_equality}. There hold
\begin{equation}\label{local_energy_equality_theta}
\begin{split}
    \frac{1}{2}&\int_\Omega |u^{\theta}(t, x) + \nabla p_h^{\theta}(t, x)|^2 \, \psi(x) \diff x + \int_{0}^t \int_\Omega  S^{\theta}(\tau, x, Du^\theta):D(\psi(x)(u^{\theta}+\nabla p_h^\theta)(\tau,x)) \diff x \diff \tau = \\
    &= \frac{1}{2}\int_\Omega  |u_0(x)|^2 \, \psi(x) \diff x + \int_{0}^t \int_\Omega (u^{\theta}\otimes u^{\theta}) : \nabla (\psi (u^{\theta} + \nabla p_h^{\theta}))\diff x\diff \tau + \\
    &+  \int_{0}^t\int_{\Omega}f\cdot(u^{\theta} + \nabla p_h^{\theta})\psi \diff x \diff \tau - \int_{0}^t \int_{\Omega} \left(\sum_{i=1}^N (p_1^{i,\theta} + p_2^{i,\theta}) + p_3 + p_4^{\theta} \right)(u^{\theta} + \nabla p_h^{\theta} )\cdot\nabla\psi \diff x \diff \tau
\end{split}
\end{equation}
and also 
\begin{equation}\label{local_energy_equality_after_limit_pass}
\begin{split}
    \frac{1}{2}&\int_\Omega |u(t, x) + \nabla p_h(t, x)|^2 \, \psi(x) \diff x + \int_{0}^t \int_\Omega  \chi(\tau, x):D(\psi(x)(u + \nabla p_h)(\tau,x)) \diff x \diff \tau = \\
    &= \frac{1}{2}\int_{\Omega}  |u_0(x)|^2 \, \psi(x) \diff x + \int_{0}^t \int_\Omega  (u\otimes u) : \nabla (\psi (u + \nabla p_h))\diff x\diff \tau + \\
    &+  \int_{0}^t\int_{\Omega}f\cdot(u + \nabla p_h)\psi\diff x \diff \tau -  \int_{0}^t \int_{\Omega} \left(\sum_{i=1}^N (p_1^i + p_2^i) + p_3  \right)(u + \nabla p_h )\cdot \nabla\psi \diff x \diff \tau,
\end{split}
\end{equation}
for a.e. $t \in (0,T)$ and all test functions $\psi \in C_c^{\infty}(\Omega)$. The idea is to compare \eqref{local_energy_equality_theta} with \eqref{local_energy_equality_after_limit_pass} in the limit $\theta \to 0$ to identify $\chi$ via monotonicity arguments.
\\

\noindent \underline{Step 3: Limits of the pressure terms $p_j^{i,\theta}$.} In this step, we prove for $i=1,...,N$ and $j = 1, 2$ and  that
\begin{equation}\label{eq:conv_press_p_1_p_2_theta}
\int_{0}^t \int_{\Omega} p_j^{i,\theta}(u^{\theta} + \nabla p_h^{\theta} )\cdot\nabla\psi \diff x \diff \tau\to \int_{0}^t \int_{\Omega} p_j^i(u + \nabla p_h )\cdot\nabla\psi \diff x \diff \tau
\end{equation}

\noindent Let $j=1$ and $i \in \{1,...,N\}$ be fixed. First, we split the integral for $\Omega \cap \mathcal{B}^{i}_{2r}$ and $\Omega \setminus \mathcal{B}^{i}_{2r}$. We treat the resulting terms separately.
\begin{itemize}
    \item On $\mathcal{B}^{i}_{2r}$ we have the weak convergence of $p_1^{i,\theta}$ in $L^{r_i'(t)}_{t,x}$ so it is sufficient to have strong convergence of $u^{\theta} + \nabla p_h^{\theta}$ in $L^{r_i(t)}_{t,x,loc}$ thanks to the compact support of $\psi$. This follows from \ref{conv_item_strongu_local} and \ref{conv_item_est_add_1} as $R_i(t)-r_i(t)\geq \frac{s_{\min}}{d}$.
    \item On $\Omega \setminus \mathcal{B}^{i}_{2r}$ we use the weak$^*$ convergence of $p_1^{i,\theta}$ in $L^{s'_{\max}}_t L^{\infty}_x$ from \ref{conv_item_est4and1/2-A} and local strong convergence of $u^{\theta} + \nabla p_h^{\theta}$ in $L^{s_{\max}}_t L^1_x$ from \ref{conv_item_strongu_c1c2} and \ref{conv_item_est_add_1}.
\end{itemize}

\noindent Now, let $j = 2$ and $i \in \{1,...,N\}$ be fixed. As above, we split the integral for $\Omega \cap \mathcal{B}^{i}_{2r}$ and $\Omega \setminus \mathcal{B}^{i}_{2r}$.
\begin{itemize}
    \item On $\mathcal{B}^{i}_{2r}$ we have the weak convergence of $p_2^{i,\theta}$ in $L^{R_i(t)/2}_{t,x}$ so it is sufficient to have strong convergence of $u^{\theta} + \nabla p_h^{\theta}$ in $L^{(R_i(t)/2)'}_{t,x,loc}$. However, we have $\left(\frac{R_i(t)}{2} \right)' < R_i(t)$  because $R_i(t) > 3$ (note that we already checked this in Section~\ref{section:4} below \eqref{eq:regularity_estimates_not_u_2}). Therefore, the required strong convergence follows from \ref{conv_item_strongu_local} and \ref{conv_item_est_add_1}.
    \item On $\Omega \setminus \mathcal{B}^{i}_{2r}$ we use the weak$^*$ convergence of $p_2^{i,\theta}$ in $L^{s_0/2}_t L^{\infty}_x$ from \ref{conv_item_est4and1/2-C} and local strong convergence of $u^{\theta} + \nabla p_h^{\theta}$ in $L^{s_0/2}_t L^1_x$ from \ref{conv_item_strongu_c1c2} and \ref{conv_item_est_add_1}.
\end{itemize}

\noindent \underline{Step 4: Limits of the other terms.} First, we notice that a direct application of \ref{conv_item_spaceintegrals}, $f \in L^{1}_t L^2_{x}$, $p_3 \in L^{1}_tL^2_x$ (Lemma \ref{lem:integrability_of_pi}), \ref{conv_item_est9and5/8}, \ref{conv_item_est_add_1}, \ref{c0} and  \ref{conv_item_strongu} yields for almost all $t\in (0,T)$
\begin{align}\label{eq:conv_energy_with_theta_1}
    \frac{1}{2}\int_\Omega |u^{\theta}(t, x) + \nabla p_h^{\theta}(t, x)|^2\psi(x)\diff x \to \frac{1}{2}\int_\Omega |u(t, x) + \nabla p_h(t, x)|^2\psi(x)\diff x,
\end{align}
\begin{equation}\label{eq:conv_energy_with_theta_2}
    \int_{0}^t \int_{\Omega} f\cdot (u^\theta + \nabla p_h^{\theta}) \psi \diff x \diff \tau \to \int_{0}^t \int_{\Omega}   f \cdot(u + \nabla p_h)\psi \diff x \diff \tau,
\end{equation}
\begin{equation}\label{eq:convergence_term_p_3_theta}
\int_{0}^t \int_{\Omega} p_3(u^{\theta} + \nabla p_h^{\theta} )\cdot\nabla\psi \diff x \diff \tau\to \int_{0}^t \int_{\Omega} p_3(u + \nabla p_h )\cdot\nabla\psi \diff x \diff \tau.
\end{equation}
Similarly, we also have
\begin{equation}\label{eq:convergence_term_p_4_theta}
\int_{0}^t \int_{\Omega} p_4^{\theta}(u^{\theta} + \nabla p_h^{\theta} )\cdot\nabla\psi \diff x \diff \tau\to 0
\end{equation}
because we can estimate
$$
\left|\int_{0}^t \int_{\Omega} p_4^{\theta}(u^{\theta} + \nabla p_h^{\theta} )\nabla\psi \diff x \diff \tau\right| \leq \|p_4^{\theta}\, \theta^{-1/s_{\max}}\|_{L^{s'_{\max}}_{t,x}} \, \|\theta^{1/s_{\max}}(u^{\theta} + \nabla p_h^{\theta} )\nabla\psi \|_{L^{s_{\max}}_{t,x}} \to 0
$$
due to estimate \ref{lemmathet_est4and1/2-F} and convergences \ref{conv_item_sing_u} and \ref{conv_item_est_add_1}. Now we want to prove that
\begin{equation}\label{eq:conv_energy_with_theta_3}
\int_{0}^t \int_\Omega (u^{\theta}\otimes u^{\theta}) : \nabla (\psi (u^{\theta} + \nabla p_h^\theta))\diff x\diff \tau \to \int_{0}^t \int_\Omega (u\otimes u) : \nabla (\psi (u + \nabla p_h))\diff x\diff \tau.
\end{equation}
We split $\psi (u^{\theta} + \nabla p_h^\theta) = \psi u^{\theta} + \psi \nabla p_h^\theta$. The convergence for the term $\nabla p_h^{\theta}$ is a simple consequence of \ref{conv_item_est_add_1}, \ref{conv_item_est_add_2/5} and $u^{\theta} \to u$ in $L^2_{t,x}$ from \ref{conv_item_strongu}. Therefore, we focus on $\int_{0}^t \int_\Omega (u^{\theta}\otimes u^{\theta}) : \nabla (\psi u^{\theta})\diff x\diff \tau$. We easily compute
$$
\int_{0}^t \int_\Omega (u^{\theta}\otimes u^{\theta}) : \nabla (\psi u^{\theta}) \diff x\diff \tau=  \frac{1}{2}
\int_{0}^t \int_\Omega |u^{\theta}|^2\, u^{\theta} \cdot \nabla \psi \diff x\diff \tau - \frac{1}{2}\int_{0}^t \int_\Omega \psi \DIV u^{\theta}\,  |u^{\theta}|^2 \diff x\diff \tau.
$$
The first term converges to $\frac{1}{2}
\int_{0}^t \int_\Omega |u|^2\, u \cdot \nabla \psi \diff x\diff \tau$ because $u^{\theta} \to u$ strongly in $L^3_{t,x}$ as in \ref{conv_item_strongu} (note that $s_0>3$). The second term vanishes by the incompressibility condition so that we obtain \eqref{eq:conv_energy_with_theta_3}. \\

\noindent Collecting \eqref{eq:conv_press_p_1_p_2_theta}--\eqref{eq:conv_energy_with_theta_3}, we conclude that for almost all $t\in (0,T)$
\begin{equation}\label{eq:summary_step_convergences_theta}
\begin{split}
\limsup_{\theta \to 0} \int_{0}^t \int_\Omega  S^{\theta}(\tau, x, Du^\theta):D(\psi(x)&(u^{\theta}+\nabla p_h^\theta)(\tau,x)) \diff x \diff \tau \leq\\
&\leq \int_{0}^t \int_\Omega  \chi(\tau, x):D(\psi(x)(u + \nabla p_h)(\tau,x)) \diff x \diff \tau.
\end{split}
\end{equation}

\noindent \underline{Step 5: monotonicity inequality.} In this step, we will prove for a.e. $t \in (0,T)$ and $\psi \in C_c^{\infty}(\Omega)$ we have
\begin{align}\label{conv:monotonicity_trick_0}
    \limsup_{\theta \to 0}\int_0^t\int_\Omega S(t, x, Du^\theta): Du^\theta \psi(x)\diff x\diff \tau \leq \int_0^t \int_\Omega \chi(t, x) : Du\,\psi(x)\diff x\diff \tau.
\end{align}

\noindent We decompose term on the (LHS) of \eqref{eq:summary_step_convergences_theta} into six parts $X_1$, $X_2$, $X_3$, $X_4$, $X_5$, $X_6$ as follows:
\begin{align*}
& \int_{0}^t \int_\Omega  S^{\theta}(\tau, x, Du^\theta):D(\psi(x)(u^{\theta} + \nabla p_h^\theta)(\tau,x)) \diff x \diff \tau = \\
&  = \int_{0}^t \int_\Omega S(\tau, x, Du^\theta):Du^{\theta} \psi(x)  \diff x \diff \tau + \int_{0}^t \int_\Omega S(\tau, x, Du^\theta):[D(\nabla p_h^\theta)\,\psi(x)] \diff x \diff \tau\\
& \phantom{ = } + \int_{0}^t \int_\Omega S(\tau, x, Du^\theta):[\nabla\psi(x) \otimes (u^{\theta} + \nabla p_h^\theta)] \diff x \diff \tau + \int_{0}^t \int_\Omega \theta \, \nabla_\xi m(|Du^{\theta}|):D u^{\theta} \, \psi(x) \diff x\diff \tau \\
& \phantom{ = } + \int_{0}^t \int_\Omega \theta \, \nabla_\xi m(|Du^{\theta}|):D (\nabla p_h^{\theta}) \, \psi(x) \diff x\diff \tau + \int_{0}^t \int_\Omega \theta \, \nabla_\xi m(|Du^{\theta}|):(\nabla \psi(x) \otimes (u^{\theta} + \nabla p_h^\theta))  \diff x \diff \tau\\
&=: X_1 + X_2 + X_3 + X_4 + X_5 + X_6.
\end{align*}
Term $X_1$ is the one we want to estimate. For term $X_2$, we have
\begin{equation}\label{eq:conv_theta_term_X2}
    \int_0^t \int_\Omega S(\tau, x, Du^\theta) : [D(\nabla p_h^\theta) \psi(x)]\diff x\diff \tau \to \int_0^t \int_\Omega \chi(\tau,x) : [D(\nabla p_h)\psi(x)]\diff x\diff \tau
\end{equation}
because $S(t,x,Du^{\theta}) \rightharpoonup \chi$ in $L^{s'(t,x)}$ so that $S(t,x,Du^{\theta}) \rightharpoonup \chi$ in $L^{s'_{\max}}_{t,x}$ and $D(\nabla p_h^\theta) \to D(\nabla p_h)$ in $L^{s_{\max}}_{t,x, loc}$. For $X_3$ we claim that
\begin{equation}\label{eq:conv_theta_term_X3}
\int_0^t \int_\Omega S(\tau, x, Du^\theta) : [\nabla\psi \otimes (u^\theta + \nabla p_h^\theta)]\diff x\diff \tau \to \int_0^t \int_\Omega \chi(\tau,x) : [\nabla \psi\otimes (u + \nabla p_h)]\diff x\diff \tau.
\end{equation}
To prove this we write $1 = \sum_{i=1}^N \zeta_i$ where $\{\zeta_i\}$ is the partition of unity from Notation \ref{not:zeta} so that we only need to study the integral
$$
\int_0^t \int_\Omega \zeta_i\, S(\tau, x, Du^\theta) : [\nabla\psi\otimes (u^\theta + \nabla p_h^\theta)]\diff x\diff \tau.
$$
As $\zeta_i$ is supported in $\mathcal{B}^i_r$ we can use weak convergence of $S(\tau, x, Du^\theta)$ in $L^{r_i'(t)}_{t,x}$ from \ref{conv_item_weak_monotone_operator_S} and the strong convergence $u^{\theta} + \nabla p_h^\theta \to u + \nabla p_h$ in $L^{r_i(t)}_{t,x,loc}$ from \ref{conv_item_est_add_1} and \ref{conv_item_strongu_local} (this uses also $R_i(t)-r_i(t)\geq \frac{s_{\min}}{d}$).\\

\noindent Next, for the terms $X_4$, $X_5$, $X_6$ we have
\begin{equation}\label{eq:X4_nonnegativity_and_X5}
X_4 \geq 0, \qquad \qquad X_5 \to 0, \qquad \qquad X_6 \to 0.
\end{equation}
where the convergence $X_5 \to 0$ follows immiedately from \ref{conv_item_L1_weak} and estimate \ref{lemmathet_est6}. Concerning $X_6$, the argument is the same as in \eqref{eq:convergence_term_p_4_theta} because we have exactly the same integrability of $\theta \, \nabla_\xi m(|Du^{\theta}|)$ as of $p_4^{\theta}$. Plugging \eqref{eq:conv_theta_term_X2}--\eqref{eq:X4_nonnegativity_and_X5} into \eqref{eq:summary_step_convergences_theta} we obtain \eqref{conv:monotonicity_trick_0}.

\noindent \underline{Step 6: conclusion by monotonicity trick.} By the assumption \ref{monotonicity_stress_tensor} we have
\begin{align}\label{eq:monoton_inequality_with_theta_and_eta}
    \int_{\Omega_T}(S(t, x, Du^\theta) - S(t, x, \eta)) : (Du^\theta - \eta) \, \psi(x) \geq 0
\end{align}
for any $\eta\in L^\infty_t L^\infty_x$. Now, let us study limits of two terms appearing in \eqref{eq:monoton_inequality_with_theta_and_eta}. First, we claim
\begin{align}\label{conv:monotonicity_trick_1}
    \int_{\Omega_T}S(t, x, \eta) :  Du^\theta \, \psi(x) \diff x\diff t\to \int_{\Omega_T}S(t, x, \eta) : Du \,\psi(x)\diff x\diff t.
\end{align}
Indeed, since $\eta\in L^\infty_t L^{\infty}_x$, then $S(t, x, \eta)\in L^\infty_t L^{\infty}_x$ so that \eqref{conv:monotonicity_trick_1} follows by weak convergence \ref{conv_item_Dutheta}. Second, as a direct consequence of \ref{conv_item_weak_monotone_operator_S} we have
\begin{align}\label{conv:monotonicity_trick_2}
    \int_{\Omega_T}S(t, x, Du^\theta) :  \eta \, \psi(x)\diff x\diff t \to \int_{\Omega_T}\chi(t, x) : \eta \, \psi(x)\diff x\diff t.
\end{align}
Hence, using \eqref{conv:monotonicity_trick_0}, \eqref{conv:monotonicity_trick_1} and \eqref{conv:monotonicity_trick_2}, we may take $\limsup_{\theta \to 0}$ in \eqref{eq:monoton_inequality_with_theta_and_eta} to deduce
\begin{align*}
    \int_{\Omega_T}(\chi(t, x) - S(t, x, \eta)) : (Du - \eta) \, \psi(x)\diff x\diff t \geq 0.
\end{align*}
Using Lemma \ref{res:monot_trick} (Minty's monotonicity trick), we finally obtain $\chi(t,x) = S(t,x,Du)$ a.e..
\end{proof}

\appendix

\section{Musielak-Orlicz spaces \texorpdfstring{$L^{s(t, x)}(\Omega_T)$}{Ls}}\label{app:musielaki}

\noindent Here we mark some of the basic properties of the variable exponent spaces $L^{s(t, x)}(\Omega_T)$. For the more details on the Musielak-Orlicz spaces, see \cite{chlebicka2019book}. We start with the definition

\begin{Def}\label{def:musielak_exponent_space}
Given a measurable function $s(t,x): \Omega_T \to [1,\infty)$, we let
\begin{align*}
L^{s(t,x)}(\Omega_T) = \left\{ \xi: \Omega_T \to \mathbb{R}^d: \mbox{ there is } \lambda>0 \mbox{ such that } \int_{\Omega_T} \left| \frac{\xi(t,x)}{\lambda}\right|^{s(t,x)} \diff x \diff t < \infty \right\}.
\end{align*}
Moreover if $s(t, x)$ satisfies the boundedness conditions \ref{ass:usual_bounds_exp} then this definition is equivalent to
$$
L^{s(t,x)}(\Omega_T) = \left\{ \xi: \Omega_T \to \mathbb{R}^d:  \int_{\Omega_T} \left| {\xi(t,x)}\right|^{s(t,x)} \diff x \diff t < \infty \right\}.
$$
\end{Def}

\noindent Variable exponent spaces are Banach with the following norm

\begin{thm}
Let
\begin{equation}\label{intro:norm}
\| \xi \|_{L^{s(t,x)}} = \inf \left\{\lambda>0: \int_{\Omega_T} \left| \frac{\xi(t,x)}{\lambda}\right|^{s(t,x)} \diff x \diff t \leq 1 \right\}.
\end{equation}
Then, $\left(L^{s(t, x)}, \|\cdot\|_{L^{s(t, x)}}\right)$ is a Banach space.
\end{thm}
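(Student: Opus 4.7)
The plan is to prove the theorem in two stages: (i) verify that $\|\cdot\|_{L^{s(t,x)}}$ defined in \eqref{intro:norm} is a norm, and (ii) prove completeness. Throughout I denote the \emph{modular} by $\rho(\xi) := \int_{\Omega_T} |\xi(t,x)|^{s(t,x)} \diff x \diff t$; since $s(t,x) \geq 1$ almost everywhere, the map $u \mapsto |u|^{s(t,x)}$ is pointwise convex, so $\rho$ itself is convex in $\xi$. Positivity and absolute homogeneity are immediate from \eqref{intro:norm}; nondegeneracy follows by noting that $\rho(\xi/\lambda) \leq 1$ for every $\lambda > 0$ forces $\xi = 0$ a.e., via monotone convergence as $\lambda \to 0^+$. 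For the triangle inequality, given $\lambda > \|\xi\|_{L^{s(t,x)}}$ and $\mu > \|\eta\|_{L^{s(t,x)}}$, convexity of $\rho$ yields
$$
\rho\!\left(\frac{\xi+\eta}{\lambda+\mu}\right) \leq \frac{\lambda}{\lambda+\mu}\rho(\xi/\lambda) + \frac{\mu}{\lambda+\mu}\rho(\eta/\mu) \leq 1,
$$
so that $\|\xi+\eta\|_{L^{s(t,x)}} \leq \lambda + \mu$, and taking the infimum over admissible $\lambda, \mu$ closes this step.

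For completeness I would take a Cauchy sequence $\{\xi_n\}$, extract a subsequence with $\|\xi_{n_{k+1}} - \xi_{n_k}\|_{L^{s(t,x)}} \leq 2^{-k}$, and deduce $\rho\bigl(2^k(\xi_{n_{k+1}} - \xi_{n_k})\bigr) \leq 1$. A Chebyshev-type estimate applied to the sets $E_k := \{|\xi_{n_{k+1}} - \xi_{n_k}| > 2^{-k/2}\}$, exploiting $s(t,x) \geq 1$ pointwise, then gives $|E_k| \leq 2^{-k/2}$. Borel--Cantelli produces an almost-everywhere pointwise limit $\xi$, and Fatou's lemma applied to the modular provides, for any $\lambda > 0$ and $m$ sufficiently large (depending on $\lambda$ through the Cauchy condition),
$$
\rho\!\left(\frac{\xi - \xi_{n_m}}{\lambda}\right) \leq \liminf_{k \to \infty} \rho\!\left(\frac{\xi_{n_k} - \xi_{n_m}}{\lambda}\right) \leq 1,
$$
whence $\|\xi - \xi_{n_m}\|_{L^{s(t,x)}} \leq \lambda$. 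Hence the subsequence converges in norm to $\xi$, and the full Cauchy sequence follows along by the Cauchy property; the membership $\xi \in L^{s(t,x)}(\Omega_T)$ is a consequence of the triangle inequality applied to $\xi = (\xi - \xi_{n_m}) + \xi_{n_m}$.

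The proof is essentially classical, and the $(t,x)$-dependence of $s$ creates no genuine difficulty because $s \geq 1$ a.e.\ is exactly what is needed to make $\rho$ convex, which is all the Luxemburg-norm machinery requires. The only point demanding some care is the passage between norm convergence, modular convergence, and pointwise a.e.\ convergence: one uses that $\rho(\xi/\lambda) \leq 1$ whenever $\lambda > \|\xi\|_{L^{s(t,x)}}$, combined with $s(t,x) \geq 1$, to pass first to convergence in measure before invoking Borel--Cantelli. I do not foresee any serious obstacle specific to variable exponents at this level of generality.
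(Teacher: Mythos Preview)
Your proof is correct and follows the classical Luxemburg-norm argument. Note, however, that the paper does not actually supply a proof of this statement: it is recorded in the appendix as a known fact about Musielak--Orlicz spaces, with the reader referred to \cite{chlebicka2019book} for details, so there is no ``paper's own proof'' to compare against.
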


\noindent We will be interested in the two kinds of convergences in the aformentioned spaces

\begin{Def}
We say that $\xi_n$ converges strongly to $\xi$ (denoted $\xi_n \to \xi$) in $L^{s(t, x)}(\Omega_T)$, if
$$
\|\xi_n - \xi\|_{L^{s(t, x)}}\rightarrow 0
$$
and that $\xi_n$ converges modularly to $\xi$, if there exists $\lambda > 0$ such that
$$
\int_{\Omega_T} \left| \frac{\xi_n(t,x) - \xi(t,x)}{\lambda} \right|^{s(t,x)} \diff x \diff t \to 0.
$$
\end{Def}

\noindent Modular and strong convergences are connected in the following form

\begin{thm} The following equivalence holds true:
\begin{align*}
    \|\xi_n - \xi\|_{L^{s(t, x)}} \to 0 \Longleftrightarrow \int_{\Omega_T} \left| \frac{\xi_n(t,x) - \xi(t,x)}{\lambda} \right|^{s(t,x)} \diff x \diff t \to 0 \text{ for every }\lambda >0
\end{align*}
\end{thm}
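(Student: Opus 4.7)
The plan is to unwind both directions of the equivalence directly from the definition \eqref{intro:norm} of the Luxemburg norm, with the two-sided exponent bound $s_{\min} \le s(t,x) \le s_{\max}$ from \ref{ass:usual_bounds_exp} used at the decisive step of each implication.

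The ``$\Leftarrow$'' direction is immediate: given $\eps > 0$, apply the hypothesis with $\lambda = \eps$ to obtain, for $n$ sufficiently large, $\int_{\Omega_T}|(\xi_n - \xi)/\eps|^{s(t,x)}\,\diff t\,\diff x \le 1$, which by \eqref{intro:norm} forces $\|\xi_n - \xi\|_{L^{s(t,x)}} \le \eps$. Since $\eps$ was arbitrary, the norm tends to zero.

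For the ``$\Rightarrow$'' direction, my first step would be the preliminary bound
\[
\int_{\Omega_T}\bigl\lvert \xi/\|\xi\|_{L^{s(t,x)}}\bigr\rvert^{s(t,x)}\diff t\,\diff x \le 1\qquad\text{for every nonzero }\xi\in L^{s(t,x)}(\Omega_T),
\]
obtained by choosing $\lambda_k \searrow \|\xi\|_{L^{s(t,x)}}$ (each gives modular $\le 1$ by definition of the infimum) and passing to the limit via dominated convergence; the uniform upper bound $s(t,x) \le s_{\max}$ provides an integrable dominating function. With that in hand, fix $\lambda > 0$ and assume $\|\xi_n - \xi\|_{L^{s(t,x)}} \to 0$. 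For $n$ large enough, $c_n := \|\xi_n - \xi\|_{L^{s(t,x)}}/\lambda \in (0,1)$, and a rescaling yields
\[
\int_{\Omega_T} \left\lvert\frac{\xi_n - \xi}{\lambda}\right\rvert^{s(t,x)}\diff t\,\diff x = \int_{\Omega_T} c_n^{s(t,x)}\left\lvert\frac{\xi_n - \xi}{\|\xi_n - \xi\|_{L^{s(t,x)}}}\right\rvert^{s(t,x)}\diff t\,\diff x \le c_n^{s_{\min}},
\]
where the inequality combines the preliminary bound with $c_n^{s(t,x)} \le c_n^{s_{\min}}$ (valid because $c_n<1$ and $s(t,x) \ge s_{\min}$). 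The right-hand side tends to zero since $c_n \to 0$ and $s_{\min}>0$.

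No serious obstacle is expected; the argument is a scaling manipulation. Both halves of \ref{ass:usual_bounds_exp} are nevertheless essential: $s_{\max} < \infty$ underpins the preliminary modular bound at the normalization $\xi/\|\xi\|_{L^{s(t,x)}}$ (and would fail without a $\Delta_2$-type condition in general Musielak--Orlicz spaces), while $s_{\min} > 0$ drives the decay $c_n^{s_{\min}} \to 0$.
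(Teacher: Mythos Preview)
Your proof is correct. The paper itself does not supply a proof of this statement: it is recorded in Appendix~\ref{app:musielaki} as a standard fact from the theory of Musielak--Orlicz spaces, with the reader referred to \cite{chlebicka2019book} for background. Your argument is the natural direct one and matches what one finds in the textbook literature.

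One small remark on your commentary at the end. For the preliminary bound $\int_{\Omega_T}\bigl\lvert\xi/\|\xi\|_{L^{s(t,x)}}\bigr\rvert^{s(t,x)}\diff t\,\diff x\le 1$, your dominated-convergence justification via $s_{\max}$ is valid, but Fatou's lemma (or monotone convergence, since $|\xi/\lambda_k|^{s(t,x)}$ increases as $\lambda_k\searrow\|\xi\|$) would give the same conclusion with no hypothesis on $s$ beyond measurability and $s\ge 1$. So the equivalence in the theorem actually holds in that generality; the upper bound $s_{\max}<\infty$ is only genuinely needed for the stronger statement in the subsequent Corollary, where modular convergence for a \emph{single} $\lambda$ already forces norm convergence.
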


\begin{cor}
If $s(t, x)$ satisfies the boundedness conditions \ref{ass:usual_bounds_exp}, then strong and modular convergences are equivalent.
\end{cor}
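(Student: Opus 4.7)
The forward implication (strong $\Rightarrow$ modular) is immediate: by the preceding theorem, strong convergence is equivalent to modular convergence with every $\lambda>0$, and the definition of modular convergence only asks for existence of one such $\lambda$, so there is nothing to show.

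For the reverse implication, my plan is to use the preceding theorem in the opposite direction: it suffices to prove that modular convergence for some $\lambda_0>0$ already implies modular convergence for every $\lambda>0$. Given an arbitrary $\lambda>0$, I would write
\begin{equation*}
\left|\frac{\xi_n(t,x)-\xi(t,x)}{\lambda}\right|^{s(t,x)}
=\left(\frac{\lambda_0}{\lambda}\right)^{s(t,x)}\,\left|\frac{\xi_n(t,x)-\xi(t,x)}{\lambda_0}\right|^{s(t,x)}.
\end{equation*}

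The key observation is that assumption \ref{ass:usual_bounds_exp} gives $s_{\text{min}}\le s(t,x)\le s_{\text{max}}$ a.e., so the prefactor is uniformly bounded by the constant
\begin{equation*}
K(\lambda,\lambda_0):=\max\!\left(\bigl(\lambda_0/\lambda\bigr)^{s_{\text{min}}},\bigl(\lambda_0/\lambda\bigr)^{s_{\text{max}}}\right).
\end{equation*}
Integrating then yields
\begin{equation*}
\int_{\Omega_T}\left|\frac{\xi_n-\xi}{\lambda}\right|^{s(t,x)}\diff x\diff t \;\le\; K(\lambda,\lambda_0)\int_{\Omega_T}\left|\frac{\xi_n-\xi}{\lambda_0}\right|^{s(t,x)}\diff x\diff t \;\longrightarrow\; 0
\end{equation*}
as $n\to\infty$, so modular convergence holds for every $\lambda>0$ and hence, by the previous theorem, strong convergence holds.

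There is no substantive obstacle here; the whole content lies in the pointwise bound on the exponent ratio. The only delicate point worth flagging is that the argument really uses both sides of \ref{ass:usual_bounds_exp}: the upper bound $s_{\text{max}}$ is needed when $\lambda_0/\lambda>1$, and the lower bound $s_{\text{min}}$ is needed when $\lambda_0/\lambda<1$. Without boundedness of $s$, this rescaling argument breaks down, which is exactly why the corollary is not automatic from the theorem.
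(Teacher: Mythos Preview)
Your proof is correct and is exactly the standard argument the paper has in mind; the corollary is stated without proof there, so there is nothing to compare against beyond noting that your reasoning is the intended one. One small remark: since $s(t,x)\ge 1$ is already built into the definition of the space, the case $\lambda_0/\lambda<1$ is handled by $(\lambda_0/\lambda)^{s(t,x)}\le 1$ without invoking the specific value of $s_{\text{min}}$, so the essential content of \ref{ass:usual_bounds_exp} here is really the upper bound $s_{\text{max}}$.
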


\noindent The Theorem below makes a connection between modular convergence and the convergences of the products as formulated below.

\begin{thm}\label{thm:modular_l1}\textbf{\textup{(Proposition 2.2, \cite{gwiazda2008onnonnewtonian})}}
Assume that the function $s(t, x)$ satisfies Assumption \ref{ass:exponent_cont_space}. Presuppose also that $\phi_n \to \phi$ modularly in $L^{s(t, x)}(\Omega_T)$ and $\psi_n\to \psi$ modularly in $L^{s'(t, x)}(\Omega_T)$. Then, $\phi_n\,\psi_n \to \phi\,\psi$ in $L^1(\Omega_T)$.
\end{thm}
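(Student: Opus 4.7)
The plan is to reduce the statement to a direct use of the generalized Hölder inequality. Because Assumption~\ref{ass:exponent_cont_space} includes the uniform bound $s_{\min}\le s(t,x)\le s_{\max}$, the corollary stated just before this theorem guarantees that modular convergence and norm convergence coincide in $L^{s(t,x)}(\Omega_T)$ and in $L^{s'(t,x)}(\Omega_T)$. Hence the hypotheses give
\begin{equation*}
\|\phi_n-\phi\|_{L^{s(t,x)}}\to 0,\qquad \|\psi_n-\psi\|_{L^{s'(t,x)}}\to 0,
\end{equation*}
and in particular the norms $\|\psi_n\|_{L^{s'(t,x)}}$ are uniformly bounded in $n$ while $\|\phi\|_{L^{s(t,x)}}$ is finite.

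Next, I would use the algebraic decomposition
\begin{equation*}
\phi_n\psi_n-\phi\psi=(\phi_n-\phi)\,\psi_n+\phi\,(\psi_n-\psi),
\end{equation*}
take the $L^1(\Omega_T)$-norm on both sides, and apply the generalized Hölder inequality
\begin{equation*}
\int_{\Omega_T}|fg|\,\diff x\diff t\le 2\,\|f\|_{L^{s(t,x)}}\|g\|_{L^{s'(t,x)}}
\end{equation*}
to each summand. This gives
\begin{equation*}
\|\phi_n\psi_n-\phi\psi\|_{L^1(\Omega_T)}\le 2\|\phi_n-\phi\|_{L^{s(t,x)}}\|\psi_n\|_{L^{s'(t,x)}}+2\|\phi\|_{L^{s(t,x)}}\|\psi_n-\psi\|_{L^{s'(t,x)}}.
\end{equation*}
The first summand vanishes in the limit because the first factor tends to zero and the second is uniformly bounded, while the second summand vanishes because $\|\phi\|_{L^{s(t,x)}}<\infty$ and $\|\psi_n-\psi\|_{L^{s'(t,x)}}\to 0$. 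This yields the desired $L^1$-convergence.

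I do not expect any serious obstacle in this argument. The only point that genuinely requires verification is the generalized Hölder inequality itself. This follows from the pointwise Young inequality $|fg|\le |f|^{s(t,x)}/s(t,x)+|g|^{s'(t,x)}/s'(t,x)$ applied to $f$ and $g$ rescaled so that their respective modulars are at most $1$, combined with the definition \eqref{intro:norm} of the Luxemburg norm; here the upper bound $s(t,x)\le s_{\max}$ is exactly what allows the passage from the modular to the norm. Note that the log-Hölder hypothesis \ref{ass:cont} is not used in this proof — only the bounds from \ref{ass:usual_bounds_exp} enter.
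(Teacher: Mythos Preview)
Your argument is correct. Note, however, that the paper does not supply its own proof of this statement: it is quoted in the appendix as Proposition~2.2 from \cite{gwiazda2008onnonnewtonian} and no proof is given. Your reduction to norm convergence via the corollary (modular and strong convergence coincide under the bound \ref{ass:usual_bounds_exp}), followed by the splitting $\phi_n\psi_n-\phi\psi=(\phi_n-\phi)\psi_n+\phi(\psi_n-\psi)$ and the generalized H\"older inequality, is the standard and expected route; your remark that \ref{ass:cont} plays no role here is also accurate.
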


\section{Poisson equation in \texorpdfstring{$L^p(\R^d)$}{Lp}}

\noindent The classical theory (see \cite[Theorem 1]{evans1998partial}, p. 23) states, that given $f \in C_c^{\infty}(\R^d)$ the equation
$$
-\Delta u = f \mbox{ on } \R^d,\qquad \qquad u(x) \to 0 \mbox{ as } |x|\to\infty.
$$
admits the unique smooth solution given via Newtonian potential
$$
u(x) = \Gamma \ast f, \qquad \qquad \Gamma(x) = \begin{cases}
-\frac{1}{2\pi}\,\log{|x|} & \mbox{ if } d=2,\\
\frac{1}{d(d-2)\alpha(d)}|x|^{2-d} & \mbox{ if } d>3,
\end{cases}
$$
where $\alpha(d)$ is the volume of the unit ball. Here we focus on the theory for $L^p(\R^d)$ spaces:
\begin{thm}\label{thm:poissonLp}
Let $g \in L^p(\Omega)$ and consider its extension to $\R^d$ with 0. Then, there exists the unique distributional solution to
$$
-\Delta u = \DIV \DIV g \mbox{ in } \R^d, \qquad g \in L^p(\R^d).
$$
Moreover, $\|u\|_{L^p(\R^d)} \leq C\, \|g\|_{L^p(\Omega)}$.
\end{thm}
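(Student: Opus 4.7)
The equation $-\Delta u = \DIV \DIV g = \partial_i \partial_j g_{ij}$ in $\R^d$ is, at least formally, solved by $u = (-\Delta)^{-1} \partial_i \partial_j g_{ij}$. My plan is to identify this solution operator as a composition of two Riesz transforms and then invoke the Calderón--Zygmund theorem to obtain the $L^p$-boundedness.

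\textbf{Existence and estimate.} For a component $g_{ij} \in L^p(\R^d)$ (after extension by zero outside $\Omega$), define
\[
u := - R_i R_j g_{ij},
\]
where $R_k = \partial_k (-\Delta)^{-1/2}$ denotes the $k$-th Riesz transform, which on the Fourier side acts by multiplication by $-i\xi_k/|\xi|$. Since each Riesz transform is a classical Calderón--Zygmund operator, it is bounded on $L^p(\R^d)$ for every $p \in (1,\infty)$, and thus
\[
\|u\|_{L^p(\R^d)} \leq C(p,d)\,\|g\|_{L^p(\R^d)} = C(p,d)\,\|g\|_{L^p(\Omega)}.
\]
To check that $u$ solves the equation, I would first treat smooth compactly supported $g$, in which case the Newtonian potential formula gives a classical solution with $u = -R_i R_j g_{ij}$ (this is the Fourier identity $-|\xi|^2\,\widehat{u} = -\xi_i\xi_j\,\widehat{g}_{ij}$). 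For general $g \in L^p$, approximate by $g^{(n)} \in C_c^\infty(\R^d)$ in $L^p$; the corresponding $u^{(n)}$ converge to $u$ in $L^p$ by the bound above, and one passes to the limit in the distributional identity $\langle u^{(n)}, -\Delta \varphi \rangle = \langle g^{(n)}, \partial_i\partial_j \varphi\rangle$ for $\varphi \in C_c^\infty(\R^d)$ to obtain the same identity for $u$.

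\textbf{Uniqueness.} Suppose $u_1, u_2 \in L^p(\R^d)$ both solve $-\Delta u = \DIV\DIV g$ distributionally. Then $w := u_1 - u_2 \in L^p(\R^d)$ satisfies $\Delta w = 0$ in the sense of distributions. By Weyl's lemma, $w$ is smooth and harmonic on all of $\R^d$. A harmonic function belonging to $L^p(\R^d)$ for $p < \infty$ must vanish identically: indeed, the mean value property combined with Hölder's inequality on balls of radius $R$ yields $|w(x)| \leq C R^{-d/p}\|w\|_{L^p}$, which tends to zero as $R \to \infty$. Hence $w \equiv 0$.

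\textbf{Expected difficulty.} This is essentially a textbook application of Calderón--Zygmund theory, and I do not anticipate any genuine obstacle. The one subtlety worth handling carefully is the passage to the limit in the distributional formulation when $g$ is merely $L^p$ (so that the pointwise Newtonian potential representation need not converge absolutely), but the $L^p$-continuity of the Riesz composition makes this routine. If one wishes to avoid Riesz transforms altogether, an alternative is to solve the problem for smooth $g$ by the Newtonian potential, establish the estimate on that dense class directly from the Calderón--Zygmund kernel bounds for $\partial_i \partial_j \Gamma$, and then extend by density.
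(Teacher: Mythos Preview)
Your proposal is correct, and the uniqueness argument is essentially identical to the paper's (Weyl's lemma plus the mean-value/H\"older decay estimate). The existence part, however, takes a genuinely different route from the paper's.

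You go straight to the Fourier side and write $u = -R_iR_j g_{ij}$, invoking the $L^p$-boundedness of Riesz transforms as a black box. The paper instead stays on the physical side: for smooth compactly supported $g$ it sets $u = \Gamma * \DIV\DIV g$, proves decay estimates for $u$ and $\nabla u$ at infinity, and then establishes the $L^p$ bound by duality---pairing $u$ against $\varphi \in C_c^\infty$ with $\|\varphi\|_{L^{p'}}\le 1$, introducing $\phi_\varphi = \Gamma * \varphi$, and integrating by parts twice (the decay estimates are needed to kill the boundary terms on $\partial B_R$). The Calder\'on--Zygmund input enters only through the bound $\|D^2\phi_\varphi\|_{L^{p'}} \le C$, which the paper takes from the literature. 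Your approach is shorter and more standard; the paper's is more self-contained in the sense that it makes the potential-theoretic structure and the role of the decay at infinity explicit, which is consistent with how these solutions are later used (e.g.\ the harmonic-away-from-support regularity in Lemma~\ref{lem:integrability_of_pi}). Interestingly, the alternative you sketch in your final paragraph---Newtonian potential plus density---is essentially the paper's strategy, though the paper obtains the estimate by duality rather than by direct kernel bounds.
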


\noindent To prove the theorem, we will need a few simple lemmas.

\begin{lem}[decay estimates for the Poisson's equation]
Let $f \in C_c^{\infty}(\R^d)$ and let $R$ be such that $\mbox{supp} f \subset B_R$.
\begin{itemize}
    \item[(A)] Let $u = \Gamma \ast \DIV f$. Then, there is a constant depending on $\|f\|_{L^1}$ such that for $|x| > 2R$
    $$
    |u(x)| \leq C\, |x|^{1-d}, \qquad \qquad |\nabla u| \leq C \, |x|^{-d}.
    $$
    \item[(B)] Let $u = \Gamma \ast f$. Then, there is a constant depending on $\|f\|_{L^1}$ such that for $|x| > 2R$
    $$
    |u(x)| \leq \begin{cases} C\, |x|^{2-d} &\mbox{ if } d> 2\\
    C\, \log||x| - R| &\mbox{ if } d = 2.
    \end{cases}
    \qquad \qquad |\nabla u| \leq C \, |x|^{1-d}.
    $$
\end{itemize}
\end{lem}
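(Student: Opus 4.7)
The plan is to use the explicit representation of $u$ as a convolution with the Newtonian potential and to exploit the fact that the support of $f$ is separated from the region $\{|x|>2R\}$, so that the kernel is uniformly bounded by a power of $|x|$ on that region.

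First I would rewrite part (A) by moving one derivative off of $f$ and onto $\Gamma$. Since $f\in C_c^\infty(\R^d)$ and $\Gamma$ is locally integrable, integration by parts gives $u(x)=\int_{\R^d}\Gamma(x-y)\,\DIV f(y)\diff y=\int_{\R^d}\nabla\Gamma(x-y)\cdot f(y)\diff y$. Similarly, differentiating under the integral one more time (legal since $f$ is smooth and compactly supported, and the pole of $\Gamma$ is avoided on the region of interest), $\nabla u(x)=\int_{\R^d}\nabla^2\Gamma(x-y)\,f(y)\diff y$. Now for $|x|>2R$ and $y\in\supp f\subset B_R$, the triangle inequality yields $|x-y|\ge|x|-R\ge\tfrac12|x|$, so both $|\nabla\Gamma(x-y)|\le C|x-y|^{1-d}\le C'|x|^{1-d}$ and $|\nabla^2\Gamma(x-y)|\le C|x-y|^{-d}\le C'|x|^{-d}$ uniformly in $y$. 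Pulling these bounds out of the integrals and bounding the remaining $|f|$-integral by $\|f\|_{L^1}$ proves part (A).

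For part (B), I would bound $u(x)=\int\Gamma(x-y)f(y)\diff y$ and $\nabla u(x)=\int\nabla\Gamma(x-y)f(y)\diff y$ directly. Using the same separation $|x-y|\ge\tfrac12|x|$ for $|x|>2R$, $y\in B_R$, we get $|\nabla\Gamma(x-y)|\le C|x|^{1-d}$, giving the stated bound on $\nabla u$. For $u$ itself, when $d>2$ the bound $|\Gamma(z)|\le C|z|^{2-d}$ directly yields $|u(x)|\le C|x|^{2-d}\|f\|_{L^1}$. For $d=2$ the formula $\Gamma(z)=-\tfrac{1}{2\pi}\log|z|$ has no favorable decay; the best one can do is $|x-y|\in[|x|-R,|x|+R]$, so $\bigl|\log|x-y|\bigr|\le\max\bigl(|\log(|x|-R)|,|\log(|x|+R)|\bigr)$, and since $|x|>2R$ implies $|x|-R\ge\tfrac12|x|\ge R>0$, both endpoints are comparable to $\log||x|-R|$ up to a constant depending on $R$; this gives the stated logarithmic bound.

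There is no real obstacle here — the argument is entirely elementary once the separation $|x-y|\gtrsim|x|$ is observed. The only minor subtlety is the logarithmic case $d=2$, where one must be slightly careful about the sign of $\log|x-y|$ and about absorbing constants depending on $R$; all of this is handled by the explicit endpoint bound $|x-y|\ge|x|-R$ together with the monotonicity of $|\log|$ on $[R,\infty)$.
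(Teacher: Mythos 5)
Your proof is correct and follows essentially the same approach as the paper: integrate by parts to move the divergence onto $\Gamma$, use the separation $|x-y|\ge |x|-R\ge |x|/2$ valid for $y\in B_R$ and $|x|>2R$, and bound the resulting kernel derivatives pointwise. Your treatment of the $d=2$ logarithmic case is slightly more explicit than the paper's (which simply notes that part (B) is "completely analogous" with worse decay), but the underlying reasoning is the same.
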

\begin{proof}
First, we consider (A). Let $|x| > 2R$. We observe that
$$
\Gamma \ast \DIV f(x) = \int_{B_R} \Gamma(x-y) \DIV f(y) \diff y =
- \int_{B_R} \nabla \Gamma(x-y)\, f(y) \diff y,
$$
where integration by parts is justified because we are away from singularity of $\Gamma$ and $f$ is compactly supported. Now,
$$
|x-y| \geq |x| - |y| \geq |x| - R \geq |x|/2.
$$
The conclusion follows because $\nabla \Gamma(x-y)$ is of the form $\frac{C}{|x-y|^{d-1}}$. The estimate on $\nabla u$ is proved in exactly the same way: this time we note that second order derivatives of $\Gamma(x-y)$ are of the form $\frac{C}{|x-y|^d}$. Finally, the proof of (B) is completely analogous: the only difference is that we cannot pass the divergence operator from $f$ to $\Gamma$ which results in a worse decay estimates.
\end{proof}

\begin{lem}[$L^p$ global estimate]\label{lem:globalLp_poisson}
Let $g \in C_c^{\infty}(\R^d)$ and let $u = \Gamma \ast \DIV \DIV g $. Then, $\|u\|_{L^p(\R^d)} \leq C\, \|g\|_{L^p(\R^d)}$.
\end{lem}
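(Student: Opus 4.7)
The natural route is to realize $u$ as the image of $g$ under a classical Calderón--Zygmund singular integral operator, and then invoke the $L^p$ boundedness of such operators. Concretely, writing $g=(g_{ij})$ and using the defining equation $-\Delta u=\DIV\DIV g$, Fourier-side formally gives
$$
\widehat{u}(\xi)=-\sum_{i,j}\frac{\xi_i\xi_j}{|\xi|^2}\,\widehat{g_{ij}}(\xi),
$$
so that $u=-\sum_{i,j}R_iR_j\,g_{ij}$, where $R_i$ denotes the $i$-th Riesz transform. The plan is therefore to (a) justify this representation on $C_c^\infty$, and (b) conclude by the $L^p$ continuity of the Riesz transforms.

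For step (a), since $g\in C_c^\infty(\R^d)$ we may differentiate under the convolution,
$$
u=\Gamma\ast\DIV\DIV g=\sum_{i,j}(\partial_i\partial_j\Gamma)\ast g_{ij},
$$
where each $\partial_i\partial_j\Gamma$ is understood as a principal-value distribution (of the form $\mathrm{p.v.}\,K_{ij}(x)+c_{ij}\delta_0$) with kernel $K_{ij}(x)=C\,(d\,x_ix_j/|x|^{d+2}-\delta_{ij}/|x|^d)$ that is homogeneous of degree $-d$, smooth away from the origin and has mean zero on spheres. Equivalently, one has the operator identity $\partial_i\partial_j(-\Delta)^{-1}=-R_iR_j$.

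For step (b), apply the classical Calderón--Zygmund theorem (or equivalently the $L^p$-boundedness of the Riesz transforms, $1<p<\infty$) to each of the finitely many components, obtaining
$$
\|u\|_{L^p(\R^d)}\le \sum_{i,j}\|R_iR_j g_{ij}\|_{L^p(\R^d)}\le C_p\,\|g\|_{L^p(\R^d)},
$$
with $C_p$ depending only on $p$ and $d$. Uniqueness of the distributional solution among $L^p$ functions (for $1<p<\infty$) is immediate because any two such solutions differ by a harmonic tempered distribution in $L^p(\R^d)$, which must vanish by Liouville's theorem.

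The only real subtlety is the soft point (a): writing $\partial_i\partial_j\Gamma$ as a genuine Calderón--Zygmund kernel requires the principal-value interpretation and the $\delta_0$ correction; but since $g\in C_c^\infty$ the convolution is classically well defined and matches the principal-value computation, so this is purely formal bookkeeping rather than a real obstacle. The substantive input is the Calderón--Zygmund / Riesz transform $L^p$ theorem, which we simply quote.
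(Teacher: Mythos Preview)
Your argument is correct (up to an inessential sign in the Riesz transform identity; with the convention $\widehat{R_j f}=-i(\xi_j/|\xi|)\widehat f$ one gets $u=\sum_{i,j}R_iR_j g_{ij}$ rather than $-\sum R_iR_j g_{ij}$). The approach, however, differs from the paper's. You identify $u$ directly as a sum of iterated Riesz transforms of the entries of $g$ and invoke Calder\'on--Zygmund $L^p$ boundedness. The paper instead argues by duality: for $\varphi\in C_c^\infty$ with $\|\varphi\|_{p'}\le 1$ it sets $\phi_\varphi=\Gamma\ast\varphi$, uses the Calder\'on--Zygmund estimate $\|D^2\phi_\varphi\|_{p'}\le C$ on the \emph{test function} side, and then integrates by parts twice in $\int_{\R^d} u\,\varphi$, carefully controlling the boundary terms at infinity via the decay lemma for Newtonian potentials, to arrive at $\int g:D^2\phi_\varphi$. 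Both proofs therefore rest on the same deep input---$L^p$ boundedness of $\partial_i\partial_j(-\Delta)^{-1}$---but yours applies it to $g$ directly while the paper applies it to $\varphi$ and transfers the result by duality. Your route is shorter and avoids the decay discussion; the paper's route sidesteps the principal-value and $\delta_0$ bookkeeping for $\partial_i\partial_j\Gamma$ (and the tempered-distribution subtlety for $\Gamma$ when $d=2$) since everything stays smooth.
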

\begin{proof}
Let $\varphi \in C_c^{\infty}(\R^d)$ with $\|\varphi\|_{L^{p'}(\R^d)} \leq 1$. Let $\phi_{\varphi}:= \Gamma \ast \varphi$. Then, $\| D^2 \phi_{\varphi}\|_{L^{p'}} \leq C$, cf. \cite[Theorem 3.5]{MR1616087}. We have
$$
\int_{\R^d} u(x) \, \varphi(x) \diff x = - \int_{\R^d} u(x) \, \Delta \phi_{\varphi}(x) \diff x =
\int_{\R^d} \nabla u(x) \, \nabla \phi_{\varphi}(x) \diff x.
$$
The integration by parts is justified here as for large $R$:
$$
\left|\int_{\partial B_R} u \, \nabla \phi_{\varphi} \cdot {\bf{n}} \diff S\right| \leq C\, R^{1-d} \, R^{1-d} \, R^{d-1} \to 0.
$$
and the boundary term disappears. Furthermore,
$$
\int_{\R^d} \nabla u(x) \, \nabla \phi_{\varphi}(x) \diff x =
-\int_{\R^d} \Delta u(x) \,  \phi_{\varphi}(x) \diff x = \int_{\R^d} \DIV \DIV g(x)\,  \phi_{\varphi}(x) \diff x.
$$
Again, to justify the integration by parts we just compute
$$
\left|\int_{\partial B_R} \phi_{\varphi} \, \nabla u  \cdot {\bf{n}} \diff S\right| \leq \begin{cases}
\log(R) \, R^{-2} \, R & (\mbox{ if } d = 2) \\
R^{2-d} \, R^{-d} \, R^{d-1} & (\mbox{ if } d > 2)
\end{cases} \qquad \to 0.
$$
Finally, by compact support of $g$, we can integrate by parts twice to deduce
$$
\left|\int_{\R^d} \DIV \DIV g(x)\,  \phi_{\varphi}(x) \diff x\right| \leq
C \, \|g\|_{L^p(\R^d)}
$$
uniformly in $\varphi$. The conclusion follows.
\end{proof}

\noindent We are in position to prove Theorem \ref{thm:poissonLp}.
\begin{proof}[Proof of Theorem \ref{thm:poissonLp}]

\noindent To see existence, we consider usual mollification $g_{\varepsilon}$ of $g$ and define $u_{\varepsilon} = \Gamma \ast g_{\varepsilon}$. Then, Lemma \ref{lem:globalLp_poisson} gives sufficient bounds to pass to the limit in the distributional formulation. The needed estimate follows from the weak lower-semicontinuity of the norm.\\

\noindent For the uniqueness part assume, that there are two solutions $u_1, u_2 \in L^p(\R^d)$. Then from Weyl's lemma (cf. \cite{simader1996thedirichlet}) $u:= u_1 - u_2$ is a harmonic function and so, it is smooth. Then, the mean value property implies
$$
|u(x)| \leq \frac{1}{\alpha(d) R^d} \int_{B_R(x)} |u(y)| \diff y \leq \frac{\|u\|_{L^p(\R^d)}}{\alpha(d)^p} \, R^{-d/p}.
$$
Sending $R\to \infty$ we deduce $u = 0$.

\end{proof}

\section{Useful results}

\begin{lem}\label{thm:interpolation}
Suppose that $v \in L^{\infty}_t L^2_x\cap L^q_t W^{1,q}_x$ and $q \geq 2$. Then, $v \in L^{r_0}_t L^{r_0}_x$ where $r_0 = q \left(1 + \frac{2}{d} \right)$ and
$$
\|v\|_{L^{r_0}_t L^{r_0}_x} \leq C(\|v\|_{L^{\infty}_t L^2_x}, \|v\|_{L^q_t W^{1,q}_x}).
$$
\end{lem}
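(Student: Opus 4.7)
The plan is to apply the Gagliardo--Nirenberg interpolation inequality pointwise in time and then integrate. Specifically, for almost every fixed $t\in (0,T)$, the function $v(t,\cdot)$ lies in $W^{1,q}(\Omega)\cap L^2(\Omega)$, so one has the interpolation estimate
\begin{equation*}
\|v(t,\cdot)\|_{L^{r_0}(\Omega)} \leq C\,\|v(t,\cdot)\|_{W^{1,q}(\Omega)}^{\theta}\,\|v(t,\cdot)\|_{L^{2}(\Omega)}^{1-\theta}
\end{equation*}
for a suitable exponent $\theta\in(0,1)$ determined by the scaling identity $\tfrac{1}{r_0}=\theta(\tfrac{1}{q}-\tfrac{1}{d})+(1-\theta)\tfrac{1}{2}$. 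With $r_0=q(1+\tfrac{2}{d})=q(d+2)/d$, this identity is satisfied precisely by $\theta=d/(d+2)$, which indeed lies in $(0,1)$ regardless of whether $q\le d$, $q=d$ or $q>d$ (a minor check in each regime using the standard form of Gagliardo--Nirenberg on a bounded Lipschitz domain).

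The key observation for the time integration is the arithmetic identity $r_0\,\theta = q$, which is equivalent to our choice $r_0=q(d+2)/d$. Raising the pointwise estimate to the power $r_0$ gives
\begin{equation*}
\|v(t,\cdot)\|_{L^{r_0}(\Omega)}^{r_0} \leq C\,\|v(t,\cdot)\|_{W^{1,q}(\Omega)}^{q}\,\|v(t,\cdot)\|_{L^{2}(\Omega)}^{r_0 - q},
\end{equation*}
so that integrating in $t\in(0,T)$ and pulling the $L^\infty_t L^2_x$-norm out of the integral yields
\begin{equation*}
\int_0^T \|v(t,\cdot)\|_{L^{r_0}(\Omega)}^{r_0}\,\diff t \leq C\,\|v\|_{L^{\infty}_t L^{2}_x}^{r_0 - q}\int_0^T\|v(t,\cdot)\|_{W^{1,q}(\Omega)}^{q}\,\diff t = C\,\|v\|_{L^{\infty}_t L^{2}_x}^{r_0 - q}\,\|v\|_{L^{q}_t W^{1,q}_x}^{q}.
\end{equation*}
Taking the $r_0$-th root provides the claimed bound.

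The only non-routine point is the invocation of the Gagliardo--Nirenberg inequality on a bounded Lipschitz domain $\Omega$ for functions without prescribed boundary values; I would cite the standard version that uses the full $W^{1,q}(\Omega)$ norm on the right-hand side rather than just $\|\nabla v\|_{L^q(\Omega)}$, since here $v$ need not vanish on $\partial\Omega$. With this form, no additional Poincaré-type argument is required, and the identity $r_0\theta=q$ is exactly what makes the time-integrated estimate close up without losing information.
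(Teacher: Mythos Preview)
Your proof is correct and is the standard argument for this classical interpolation estimate: apply Gagliardo--Nirenberg in space for a.e.\ $t$, note that the choice $\theta=d/(d+2)$ gives exactly $r_0\theta=q$, and integrate in time. The paper itself states this lemma in the appendix without proof, treating it as a known auxiliary result, so there is nothing to compare against; your write-up is precisely what one would supply if a proof were demanded.
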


\begin{lem}\label{L2convlemma}
Let $1 \leq p < \infty$ and $\{u_n\}_{n \in \N}$ be a sequence such that $u_n \to u$ in $L^p_t L^p_x$. Then, there exists a subsequence $\{u_{n_k}\}_{k \in \N}$, such that
$$
\mbox{for a.e. } t \in (0,T) \qquad u_{n_k}(t,x) \to u(t,x) \mbox{ in } L^p_x
$$
Moreover, if $\{u_{n_k}\}_{k \in \N}$ is bounded in $L^{\infty}_t L^2_x$ we have for a.e. $t \in (0,T)$
$$
\int_{\Omega} |u(t,x)|^2 \diff x \leq \liminf_{k \to \infty} \int_{\Omega} |u_{n_k}(t,x)|^2 \diff x
$$
\end{lem}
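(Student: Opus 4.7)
\textbf{Proof plan for Lemma \ref{L2convlemma}.}

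The plan is to treat the two conclusions as essentially independent. For the first part, the hypothesis $u_n \to u$ in $L^p_t L^p_x$ is equivalent to saying that the nonnegative scalar function $\varphi_n(t) := \|u_n(t,\cdot) - u(t,\cdot)\|_{L^p_x}^p$ converges to $0$ in $L^1(0,T)$. By the standard Riesz--Fischer type argument (extracting from any $L^1$-convergent sequence a pointwise a.e. convergent subsequence), there exists a subsequence, which I will denote $u_{n_k}$, such that $\varphi_{n_k}(t) \to 0$ for a.e. $t\in(0,T)$. This is exactly the claim $u_{n_k}(t,\cdot) \to u(t,\cdot)$ in $L^p_x$ for a.e. $t$.

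For the second part, fix $t\in(0,T)$ in the full-measure set $E$ where $u_{n_k}(t,\cdot) \to u(t,\cdot)$ in $L^p_x$. The hypothesis $\{u_{n_k}\}\subset L^\infty_t L^2_x$ uniformly gives $\|u_{n_k}(t,\cdot)\|_{L^2_x}\le C$ for all $k$ and all $t$. I would then argue as follows: pick an arbitrary sub-subsequence $\{u_{n_{k_j}}\}_{j}$ achieving the $\liminf$, i.e.\ such that $\|u_{n_{k_j}}(t,\cdot)\|_{L^2_x}^2 \to \liminf_k \|u_{n_k}(t,\cdot)\|_{L^2_x}^2$. Since this sub-subsequence is bounded in $L^2_x$, Banach--Alaoglu supplies a further subsequence converging weakly in $L^2_x$ to some $w\in L^2_x$. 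Since it still converges strongly to $u(t,\cdot)$ in $L^p_x$, it converges in the sense of distributions to $u(t,\cdot)$, and the uniqueness of distributional limits forces $w = u(t,\cdot)$ in $L^2_x$. Weak lower semicontinuity of the $L^2$-norm then gives
\[
\int_\Omega |u(t,x)|^2 \diff x \le \liminf_{m\to\infty} \int_\Omega |u_{n_{k_{j_m}}}(t,x)|^2 \diff x = \liminf_{k\to\infty} \int_\Omega |u_{n_k}(t,x)|^2 \diff x,
\]
as required, where the equality uses that $\{u_{n_{k_j}}\}$ was chosen so its $L^2$ norms converge to the original $\liminf$.

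The argument contains no genuine obstacle; the only point requiring a bit of care is that in the second part the weakly convergent further subsequence a priori depends on the time $t$. This is harmless, however, because the desired conclusion is a pointwise $\liminf$ inequality, and it is enough to produce, at each fixed $t\in E$, \emph{some} weakly $L^2$-convergent sub-subsequence of $\{u_{n_k}(t,\cdot)\}$ whose limit is identified with $u(t,\cdot)$ through the strong $L^p_x$ convergence. No simultaneous extraction across $t$ is required, so no diagonal or measurable-selection argument is needed.
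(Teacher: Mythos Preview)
Your argument is correct and follows essentially the same route as the paper's own proof: reduce the first claim to pointwise a.e.\ convergence of the scalar sequence $t\mapsto\|u_n(t,\cdot)-u(t,\cdot)\|_{L^p_x}$, and for the second claim combine the uniform $L^2_x$ bound with Banach--Alaoglu and weak lower semicontinuity, identifying the weak $L^2_x$ limit via the strong $L^p_x$ convergence. One small slip: the bound $\|u_{n_k}(t,\cdot)\|_{L^2_x}\le C$ holds only for a.e.\ $t$ (the $L^\infty_t$ norm is an essential supremum), so you should intersect your set $E$ with the full-measure set on which this bound holds simultaneously for all $k$; since there are countably many $k$, this intersection still has full measure.
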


\begin{lem}\label{lem:double_moll_conv}
Let $v \in L^1_t L^1_x$ and $\eta_{\varepsilon}$ be as in Definition \ref{res:mol_in_sp}. Then
$$
\int_{B_{\varepsilon}(0)} \int_{B_{\varepsilon}(0)} v(t,x-y-z) \, \eta
_{\varepsilon}(y) \, \eta_{\varepsilon}(z) \diff y \diff z \to v(t,x) \mbox{ in } L^1_t L^1_x.
$$
\end{lem}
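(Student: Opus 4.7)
The plan is to reduce the double integral to a single convolution with a compound mollifier and then apply the standard $L^1$ approximation theorem together with dominated convergence in the time variable.

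First, by Fubini's theorem applied for each fixed $t$, I would set $\tilde{\eta}_{\eps} := \eta_{\eps} \ast \eta_{\eps}$ and identify
$$
\int_{B_{\eps}(0)} \int_{B_{\eps}(0)} v(t,x-y-z) \, \eta_{\eps}(y) \, \eta_{\eps}(z) \diff y \diff z = \bigl(v(t,\cdot) \ast \tilde{\eta}_{\eps}\bigr)(x).
$$
The function $\tilde{\eta}_{\eps}$ is smooth, nonnegative, supported in $B_{2\eps}(0)$, and a change of variables together with $\int_{\R^d}\eta_{\eps}=1$ yields $\int_{\R^d} \tilde{\eta}_{\eps}(x)\diff x = 1$. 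Hence $\{\tilde{\eta}_{\eps}\}_{\eps>0}$ is a standard family of approximating kernels (with a slightly enlarged support radius that still shrinks to $\{0\}$).

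Second, for a.e. $t \in (0,T)$ the slice $v(t,\cdot)$ belongs to $L^1(\R^d)$, a set of full measure by $v \in L^1_t L^1_x$ and Fubini. The classical mollification theorem for $L^1$ functions then gives, for each such $t$,
$$
\|v(t,\cdot) \ast \tilde{\eta}_{\eps} - v(t,\cdot)\|_{L^1_x} \xrightarrow[\eps\to 0]{} 0.
$$
Simultaneously, Young's convolution inequality together with $\|\tilde{\eta}_{\eps}\|_{L^1(\R^d)} = 1$ furnishes the pointwise-in-$t$ bound
$$
\|v(t,\cdot) \ast \tilde{\eta}_{\eps} - v(t,\cdot)\|_{L^1_x} \leq 2\,\|v(t,\cdot)\|_{L^1_x},
$$
whose right-hand side is an integrable majorant in $t$ by assumption.

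Finally, I would invoke the Lebesgue dominated convergence theorem in the time variable to conclude that $\|v \ast \tilde{\eta}_{\eps} - v\|_{L^1_t L^1_x} \to 0$, which is precisely the claim. I do not anticipate any genuine obstacle: the only step requiring care is the Fubini identification of the iterated integral as a single convolution with $\tilde{\eta}_{\eps}$, after which everything reduces to textbook results.
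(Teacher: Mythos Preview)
Your proof is correct. The identification of the iterated mollification as a single convolution with $\tilde{\eta}_{\eps}=\eta_{\eps}\ast\eta_{\eps}$ is valid (indeed $\tilde{\eta}_{\eps}(x)=\eps^{-d}(\eta\ast\eta)(x/\eps)$, so it is a genuine rescaled kernel), and the combination of the spatial $L^1$ mollification theorem with dominated convergence in $t$ via the Young majorant $2\|v(t,\cdot)\|_{L^1_x}$ closes the argument cleanly.

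The paper's own proof proceeds differently: it fixes a smooth compactly supported approximation $v_n\to v$ in $L^1_tL^1_x$, uses Young's inequality twice to control the double mollification of $v-v_n$ by $\|v-v_n\|_{L^1_tL^1_x}$, estimates the double mollification of the smooth $v_n$ against $v_n$ itself by $2\eps|\Omega_T|\|\nabla v_n\|_{\infty}$, and concludes by a standard three-term splitting. Your route is somewhat more economical, since recognizing the compound kernel lets you invoke the one-variable mollification theorem directly and avoids introducing the auxiliary sequence $\{v_n\}$; the paper's version, on the other hand, never needs to discuss the structure of $\tilde{\eta}_{\eps}$ and works entirely at the level of iterated Young estimates. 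Both are elementary and of comparable length.
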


\begin{lem}\label{lem:conv_mollifier_with_weight}
Let $f \in L^2_x$ and $\psi \in L^{\infty}(\Omega)$. Then
$$
\int_{\Omega} |f^{\varepsilon}(x)|^2 \, \psi(x) \diff x \to \int_{\Omega} |f(x)|^2 \, \psi(x) \diff x.
$$
\end{lem}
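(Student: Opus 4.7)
The plan is to reduce the claim to the standard fact that spatial mollification converges strongly in $L^2$, combined with the algebraic identity $a^2-b^2 = (a-b)(a+b)$.

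First, I would recall that by the convention in Definition~\ref{res:mol_in_sp} one extends $f$ by zero outside $\Omega$ before convolving with $\eta_\varepsilon$. Standard properties of mollification give $f^\varepsilon \to f$ strongly in $L^2(\mathbb{R}^d)$, hence also strongly in $L^2(\Omega)$. Moreover, Young's convolution inequality yields $\|f^\varepsilon\|_{L^2(\Omega)} \le \|f^\varepsilon\|_{L^2(\mathbb{R}^d)} \le \|f\|_{L^2(\Omega)}$, so the sequence $\{f^\varepsilon\}$ is uniformly bounded in $L^2$.

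Next, I would write
$$
\int_\Omega \bigl(|f^\varepsilon(x)|^2 - |f(x)|^2\bigr)\,\psi(x)\diff x = \int_\Omega \bigl(f^\varepsilon(x)-f(x)\bigr)\bigl(f^\varepsilon(x)+f(x)\bigr)\,\psi(x)\diff x,
$$
and estimate the right-hand side by H\"older's inequality, using $\psi \in L^\infty(\Omega)$:
$$
\left|\int_\Omega \bigl(|f^\varepsilon|^2 - |f|^2\bigr)\,\psi \diff x\right| \le \|\psi\|_\infty\,\|f^\varepsilon - f\|_{L^2(\Omega)}\,\bigl(\|f^\varepsilon\|_{L^2(\Omega)} + \|f\|_{L^2(\Omega)}\bigr) \le 2\,\|\psi\|_\infty\,\|f\|_{L^2(\Omega)}\,\|f^\varepsilon - f\|_{L^2(\Omega)}.
$$
Letting $\varepsilon \to 0^+$, the factor $\|f^\varepsilon-f\|_{L^2(\Omega)}$ tends to zero by the first paragraph, while the remaining factors are bounded. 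This yields the required convergence.

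There is no genuine obstacle here: the argument is essentially a one-step consequence of $L^2$-convergence of the mollification together with H\"older's inequality, and it does not rely on any feature of $\Omega$ beyond measurability.
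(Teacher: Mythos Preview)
Your proof is correct and follows essentially the same approach as the paper's own (commented-out) argument: both rely on the strong convergence $f^\varepsilon \to f$ in $L^2$ together with a polarization-type identity, yours being $|f^\varepsilon|^2 - |f|^2 = (f^\varepsilon - f)\cdot(f^\varepsilon + f)$ followed by H\"older. The only cosmetic remark is that in the paper $f$ is used as a vector-valued quantity, so the product should be read as an inner product, but this changes nothing in the argument.
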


\begin{lem}\textup{\textbf{(Ne\v{c}as theorem about negative norms \cite[Lemma 2.2.2]{MR3013225})}}\label{lem:deRham}
Let $\Omega\subset \mathbb{R}^d$, $d\geq 2$, be a bounded Lipchitz domain,  let $1 < q < \infty$. Suppose $f \in W^{-1, q}(\Omega)^d$ satisfies
\begin{align*}
    f(v) = 0 \text{ for all } v\in C^\infty_{0}(\Omega) \text{, }\DIV v = 0
\end{align*}
Then there exists a unique $p\in L^q(\Omega)$ satisfying
$$
\int_{\Omega} p \diff x = 0, \qquad f = \nabla p
$$
in the sense of distributions. Moreover,
\begin{align}\label{bound_on_p_deRham}
    \|p\|_{L^q_x} \leq C\|f\|_{W^{-1, q}(\Omega)^d}
\end{align}
with some constant $C = C(q, \Omega_0, \Omega) > 0$.
\end{lem}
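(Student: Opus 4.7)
The plan is to prove the Ne\v{c}as theorem on negative norms by a duality argument, using the \emph{Bogovski\u{\i} right inverse} of the divergence operator as the key auxiliary tool. What I need is the existence, on any bounded Lipschitz domain $\Omega$, of a bounded linear map
\[
\mathcal{B}: L^{q'}_0(\Omega) \longrightarrow W^{1,q'}_0(\Omega)^d, \qquad L^{q'}_0(\Omega) := \Bigl\{g \in L^{q'}(\Omega):\ \int_\Omega g \diff x = 0\Bigr\},
\]
such that $\DIV \mathcal{B}(g) = g$ almost everywhere and $\|\mathcal{B}(g)\|_{W^{1,q'}_0} \le C(q,\Omega)\|g\|_{L^{q'}}$. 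On a star-shaped domain this is classical via Bogovski\u{\i}'s explicit integral formula combined with Calder\'on--Zygmund theory; for a general bounded Lipschitz $\Omega$ one decomposes it into a finite union of star-shaped subdomains and patches the local inverses together through a partition of unity. Establishing $\mathcal{B}$ with its $W^{1,q'}$ estimate is the hardest step of the proof, but it is standard and I would simply cite it (Galdi's monograph covers both the star-shaped case and the gluing procedure).

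With $\mathcal{B}$ at hand, I would define a functional $F$ on $L^{q'}_0(\Omega)$ by $F(g) := - f(\mathcal{B}(g))$. The hypothesis $f(v) = 0$ for all smooth compactly supported divergence-free $v$, extended by density (of $\{v\in C^\infty_c(\Omega)^d:\DIV v=0\}$ inside $\{v\in W^{1,q'}_0(\Omega)^d:\DIV v=0\}$ on Lipschitz domains), ensures that if $\DIV v_1 = \DIV v_2 = g$ with $v_j\in W^{1,q'}_0(\Omega)^d$ then $f(v_1) = f(v_2)$, so $F$ is intrinsic to $g$. The continuity estimate
\[
|F(g)| \le \|f\|_{W^{-1,q}} \, \|\mathcal{B}(g)\|_{W^{1,q'}_0} \le C\,\|f\|_{W^{-1,q}}\,\|g\|_{L^{q'}}
\]
shows $F \in (L^{q'}_0(\Omega))^*$. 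The dual of $L^{q'}_0(\Omega)$ is isometrically $L^q(\Omega)/\R$, and is uniquely represented by the element $p \in L^q(\Omega)$ with $\int_\Omega p \diff x = 0$ satisfying $F(g) = \int_\Omega p(x)\,g(x) \diff x$. The bound \eqref{bound_on_p_deRham} then follows directly from the displayed estimate on $F$.

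To conclude that $f = \nabla p$ distributionally, I would pick any $v \in C^\infty_c(\Omega)^d$ and set $g := \DIV v$. By the divergence theorem $g \in L^{q'}_0(\Omega)$, and the field $w := v - \mathcal{B}(g) \in W^{1,q'}_0(\Omega)^d$ is divergence-free; the extended vanishing property above yields $f(w) = 0$. Therefore
\[
f(v) = f(\mathcal{B}(g)) = -F(g) = -\int_\Omega p \, \DIV v \diff x = \langle \nabla p, v\rangle,
\]
which is exactly $f = \nabla p$. Uniqueness is immediate: if $p_1, p_2$ are two such pressures then $\nabla(p_1 - p_2) = 0$ distributionally on the connected domain $\Omega$, so $p_1 - p_2$ is constant, and the zero-mean normalisation forces this constant to vanish.

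As already emphasised, the single nontrivial obstacle is the construction and the $W^{1,q'}$ bound for $\mathcal{B}$ on a Lipschitz domain, which is the content of Bogovski\u{\i}'s theorem and the reason why the constant in \eqref{bound_on_p_deRham} depends on $\Omega$ (and, through the Calder\'on--Zygmund constants, on $q$). The rest of the argument is an essentially formal application of Riesz representation, and the density step needed to extend the hypothesis from $C^\infty_c$-divergence-free fields to $W^{1,q'}_0$-divergence-free fields is a standard mollification-plus-truncation procedure on Lipschitz domains.
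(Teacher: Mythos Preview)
The paper does not give its own proof of this lemma: it is stated in the appendix as a citation to \cite[Lemma 2.2.2]{MR3013225} and used as a black box. Your proposal via the Bogovski\u{\i} right inverse of the divergence is correct and is in fact the standard route to this result (and essentially the one followed in the cited reference), so there is nothing to compare against beyond noting that you have supplied a valid proof where the paper simply quotes the literature.
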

\noindent We remark that $W^{-1,q}(\Omega)$ is the dual space of $W^{1,q'}(\Omega)$.

\begin{lem}\label{res:monot_trick}\textup{\textbf{(Lemma 2.16, \cite{bulicek2021parabolic})}}
Let $S$ be as in Assumption \ref{ass:stress_tensor}. Assume there are $\chi \in L^{s'(t,x)}(\Omega_T)$ and $\xi \in L^{s(t,x)}(\Omega_T)$, such that
\begin{equation*}
\int_{\Omega_T} \left(\chi - S(t,x,\eta)\right) : (\xi - \eta)\, \psi(x)\diff t \diff x \geq 0
\end{equation*}
for all $\eta \in L^{\infty}(\Omega_T; \R^d)$ and $\psi \in C_0^{\infty}(\Omega)$ with $0 \leq \psi \leq 1$. Then,
$$
S(t,x,\xi) = \chi(t,x) \mbox{ a.e. in } \Omega_T.
$$
\end{lem}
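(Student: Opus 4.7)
The plan is to adapt Minty's monotonicity trick to the present Musielak--Orlicz framework. The only real difficulty is that $\xi \notin L^\infty(\Omega_T)$ in general, so the usual choice $\eta = \xi \pm \varepsilon w$ is not admissible. My remedy is to truncate $\xi$ on the level set $A_M := \{(t,x) : |\xi(t,x)| \leq M\}$, setting $\xi_M := \xi\, \mathds{1}_{A_M} \in L^\infty(\Omega_T; \symm)$. Since $\chi \in L^{s'(t,x)}$ and $\xi \in L^{s(t,x)}$, the generalized H\"older inequality gives $\chi:\xi \in L^1(\Omega_T)$, so absolute continuity of the integral yields
\begin{equation*}
J_M := \int_{A_M^c}(\chi:\xi)\, \psi \diff x \diff t \,\,\longrightarrow\,\, 0 \quad \text{as } M \to \infty,
\end{equation*}
and in particular for each $\varepsilon > 0$ one may pick $M = M(\varepsilon)$ with $|J_{M(\varepsilon)}| \leq \varepsilon^2$.

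For a fixed $w \in L^\infty(\Omega_T; \symm)$, I would then plug into the hypothesis the admissible test functions $\eta_{\pm} := \xi_M \mp \varepsilon w\, \mathds{1}_{A_M}$. On $A_M$ one has $\xi - \eta_{\pm} = \pm\varepsilon w$ and $S(t,x,\eta_\pm) = S(t,x,\xi \mp \varepsilon w)$, while on $A_M^c$ one has $\eta_\pm \equiv 0$, so $S(t,x,\eta_\pm) = 0$ by \ref{T1} and $\xi - \eta_\pm = \xi$. Splitting the integral, the hypothesis collapses to
\begin{equation*}
\pm\, \varepsilon \int_{A_M}(\chi - S(t,x,\xi \mp \varepsilon w)):w\, \psi \diff x \diff t + J_M \geq 0,
\end{equation*}
and dividing by $\varepsilon$ the coupling $M = M(\varepsilon)$ forces $J_M/\varepsilon \to 0$.

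The hard step is then passing to the limit $\varepsilon \to 0$ in the integral over the growing domain $A_{M(\varepsilon)}$. Pointwise convergence $S(t,x,\xi \mp \varepsilon w) \to S(t,x,\xi)$ comes for free from the Carath\'eodory property \ref{T1}; for an $\varepsilon$-uniform integrable dominant I would apply Young's inequality to the growth estimate in \ref{coercitivity_stress_tensor} to extract
\begin{equation*}
|S(t,x, \xi \mp \varepsilon w)| \leq C\bigl(1 + |\xi(t,x)|^{s(t,x)} + \|w\|_\infty^{s_{\max}} + h(t,x)\bigr) \in L^1(\Omega_T).
\end{equation*}
Dominated convergence then handles $A_{M(\varepsilon)}$, while the complement $A_{M(\varepsilon)}^c$ contributes an $o(1)$ term by the same absolute continuity argument as for $J_M$. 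Carrying this out for both signs produces
\begin{equation*}
\pm \int_{\Omega_T}(\chi - S(t,x,\xi)):w\, \psi \diff x \diff t \geq 0,
\end{equation*}
that is, equality, for every $w \in L^\infty(\Omega_T; \symm)$ and every admissible $\psi$.

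To finish, I would take $w(t,x) = v(t,x)\, e$ with $v \in L^\infty(\Omega_T)$ arbitrary and $e$ running over a basis of $\symm$; this forces the scalar function $F_e(t,x) := (\chi(t,x) - S(t,x,\xi(t,x))):e$ to satisfy $F_e\, \psi \equiv 0$ in $L^{s'(t,x)}$ for every nonnegative $\psi \in C_c^\infty(\Omega)$ (after rescaling so $0 \leq \psi \leq 1$). Exhausting $\Omega$ by such $\psi$ and varying $e$, one concludes $\chi(t,x) = S(t,x,\xi(t,x))$ for a.e.\ $(t,x) \in \Omega_T$.
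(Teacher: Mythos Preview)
The paper does not give its own proof of this lemma; it merely cites \cite{bulicek2021parabolic}, Lemma~2.16. Your argument is correct and is precisely the standard adaptation of Minty's trick to the setting where the test functions $\eta$ are restricted to $L^\infty$: truncate $\xi$ on $\{|\xi|\le M\}$, exploit $S(t,x,0)=0$ on the complement, and let $M=M(\varepsilon)\to\infty$ while $\varepsilon\to 0$. Two small points worth tightening: first, make explicit that you also arrange $M(\varepsilon)\to\infty$ (e.g.\ take $M(\varepsilon)\ge 1/\varepsilon$), since the condition $|J_{M(\varepsilon)}|\le\varepsilon^2$ alone does not force this; second, the splitting into $A_{M(\varepsilon)}$ and its complement in the limit step is unnecessary---dominated convergence applied to $\mathds{1}_{A_{M(\varepsilon)}}(\chi-S(t,x,\xi\mp\varepsilon w)):w\,\psi$ on all of $\Omega_T$ already gives the result, since the pointwise limit is $(\chi-S(t,x,\xi)):w\,\psi$ and your $L^1$ majorant is global.
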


\begin{lem}\label{aubin-lions}\textup{\textbf{(Generalized Aubin--Lions lemma, \cite[Lemma 7.7]{MR3014456})}}
Denote by
$$
W^{1, p, q}(I; X_1, X_2) := \left\{u\in L^p(I; X_1); \frac{du}{dt}\in L^q(I; X_2)\right\}
$$
Then if $X_1$ is a separable, reflexive Banach space, $X_2$ is a Banach space and $X_3$ is a metrizable locally convex Hausdorff space, $X_1$ embeds compactly into $X_2$, $X_2$ embeds continuously into $X_3$, $1 < p <\infty$ and $1\leq q\leq \infty$, we have
$$
W^{1, p, q}(I; X_1, X_3) \text{ embeds compactly into }L^p(I; X_2)
$$
In particular any bounded sequence in $W^{1, p, q}(I; X_1, X_3)$ has a convergent subsequence in $L^p(I; X_2)$.
\end{lem}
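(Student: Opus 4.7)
The plan is to follow the classical Aubin--Lions--Simon compactness strategy, adapted to the metrizable locally convex topology on $X_3$. Since $X_3$ is metrizable and locally convex Hausdorff, its topology is induced by a countable separating family of continuous seminorms $\{\rho_k\}_{k\in\mathbb{N}}$, which may be chosen increasing. The argument will rest on three pillars: a generalized Ehrling interpolation inequality, a uniform time-translation estimate in $X_3$, and the Kolmogorov--Riesz--Fr\'echet compactness criterion.

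First I would establish an Ehrling-type inequality: for every $\eta>0$ there exist $k_\eta\in\mathbb{N}$ and $C_\eta>0$ such that
$$
\|v\|_{X_2} \leq \eta\,\|v\|_{X_1} + C_\eta\,\rho_{k_\eta}(v) \qquad \text{for all } v\in X_1.
$$
The proof is by contradiction: otherwise one constructs $v_n\in X_1$ with $\|v_n\|_{X_1}=1$, $\|v_n\|_{X_2}\geq \eta_0$ and $\rho_k(v_n)\to 0$ for every $k$. The compact embedding $X_1$ into $X_2$ provides a subsequence convergent to some $v\in X_2$ with $\|v\|_{X_2}\geq \eta_0$, while continuity of $X_2\hookrightarrow X_3$ combined with $\rho_k(v_n)\to 0$ for every $k$ forces $v=0$, a contradiction.

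Next, for a bounded sequence $\{u_n\}$ in $W^{1,p,q}(I;X_1,X_3)$, writing $u_n(t+h)-u_n(t)=\int_t^{t+h}u_n'(s)\,\mathrm{d}s$ in $X_3$ and applying H\"older's inequality inside each seminorm yields
$$
\int_{I_h}\rho_k\bigl(u_n(t+h)-u_n(t)\bigr)^{p}\,\mathrm{d}t \leq \omega_k(h),
$$
with $\omega_k(h)\to 0$ as $h\to 0$ uniformly in $n$ (by absolute continuity of translation when $q=1$, and a direct quantitative estimate when $q>1$). Combined with the fact that for almost every $t$ the vectors $u_n(t)$ lie in a bounded subset of $X_1$ which is relatively compact in $X_2$ and hence in $X_3$, a diagonal extraction over the countable family $\{\rho_k\}$ together with the Kolmogorov--Riesz--Fr\'echet criterion produces a subsequence $\{u_{n_k}\}$ converging in $L^p(I;X_3)$ in the sense that $\|\rho_j(u_{n_k}-u)\|_{L^p(I)}\to 0$ for every $j$. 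The reflexivity of $X_1$ enters here to identify the weak $L^p(I;X_1)$-limit as the same $u$ via a standard duality argument.

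Applying the Ehrling estimate pointwise in time to $u_{n_k}-u_{n_m}$ and integrating gives
$$
\|u_{n_k}-u_{n_m}\|_{L^p(I;X_2)} \leq \eta\,\|u_{n_k}-u_{n_m}\|_{L^p(I;X_1)} + C_\eta\,\|\rho_{k_\eta}(u_{n_k}-u_{n_m})\|_{L^p(I)}.
$$
The first term is uniformly bounded by $2\eta M$ with $M$ the bound in $L^p(I;X_1)$, while the second tends to zero as $k,m\to\infty$; taking $\eta$ arbitrarily small shows that $\{u_{n_k}\}$ is Cauchy, hence convergent, in $L^p(I;X_2)$. The main obstacle I anticipate is the handling of the non-normable topology on $X_3$: both the Ehrling inequality and the Kolmogorov--Riesz--Fr\'echet criterion are classically stated for Banach spaces, so their adaptation requires working systematically with the countable separating family of seminorms and a careful diagonal argument, ensuring that the time-translation estimate remains uniform across every $\rho_k$ and that the pointwise relative compactness of $\{u_n(t)\}$ is respected simultaneously in all seminorms. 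Once this technical groundwork is in place, the proof reduces to the standard Aubin--Lions machinery.
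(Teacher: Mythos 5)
The paper does not prove this lemma; it simply cites it from Roub\'{\i}\v{c}ek's book \cite[Lemma 7.7]{MR3014456}, so there is no in-paper proof to compare against. That said, your outline is the standard Aubin--Lions--Simon strategy, and the three pillars you identify (an Ehrling interpolation adapted to a separating family of seminorms, a uniform time-translation estimate transported through the chain of embeddings, and a Kolmogorov--Riesz--Fr\'{e}chet-type compactness criterion) are indeed what make Roub\'{\i}\v{c}ek's proof work. Your Ehrling argument is correct: the contradiction via normalization $\|v_n\|_{X_1}=1$, boundedness of $\|v_n\|_{X_2}$, $\rho_j(v_n)\to 0$ for each fixed $j$, compactness $X_1\hookrightarrow X_2$, continuity $X_2\hookrightarrow X_3$, and the separating property of the $\rho_k$ to force the limit to vanish, is exactly right. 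Your translation estimate, while terse, is salvageable in both the $q>1$ case (H\"older gives a pointwise-in-$t$ bound of order $h^{1/q'}$) and the $q=1$ case (interpolating the $L^1$- and $L^\infty$-in-$t$ estimates for $\rho_k(\tau_h u_n - u_n)$ gives $h^{1/p}$).

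There is one concrete misstep to flag: the assertion that ``for almost every $t$ the vectors $u_n(t)$ lie in a bounded subset of $X_1$'' is false. Uniform boundedness of $\{u_n\}$ in $L^p(I;X_1)$ gives no pointwise control on $\|u_n(t)\|_{X_1}$ across $n$; a concentrating sequence such as $u_n(t) = n^{1/p}\mathds{1}_{[0,1/n]}(t)\,v$ is a counterexample. What the Kolmogorov--Riesz--Fr\'{e}chet criterion actually requires (and what does follow from the $L^p(I;X_1)$ bound together with $p>1$ and the compact embedding $X_1\hookrightarrow X_2$) is relative compactness in $X_2$ of the family of Bochner integral averages $\left\{\int_{t_1}^{t_2}u_n(t)\,\mathrm{d}t : n\in\mathbb{N}\right\}$ for each fixed $0<t_1<t_2<T$, since these averages are uniformly bounded in $X_1$ by H\"older's inequality. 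You should replace the pointwise-boundedness claim by this integral-average condition. A second, smaller remark: the detour through compactness in $L^p(I;X_3)$ (a Fr\'{e}chet, non-normable space, where the KRF criterion is not standard) can be avoided entirely; apply the Ehrling inequality first to upgrade the translation estimate from each $\rho_{k}$ to the $X_2$-norm, and then invoke KRF once, directly in the Banach space $L^p(I;X_2)$. This removes the diagonal-extraction bookkeeping you anticipate as a technical obstacle.
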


\bibliographystyle{abbrv}
\bibliography{parpde_mo_discmodul}
\end{document}